\newcommand{\Z}{\mathbb{Z}}
\newcommand{\N}{\mathbb{N}}
\newcommand{\calC}{\mathcal{C}}
\newcommand{\eg}{{\em e.g.}}
\newenvironment{customthm}[1]
  {\innercustomthm}
  {\endinnercustomthm}
\DeclareMathOperator{\reg}{reg}
\DeclareMathOperator{\Mod}{Mod}
\DeclareMathOperator{\poly}{poly}
\long\def\@savemarbox#1#2{\global\setbox#1\vtop{\hsize\marginparwidth 
  \@parboxrestore\tiny\raggedright #2}}
\renewcommand*{\backref}[1]{}
\renewcommand*{\backrefalt}[4]{
  \ifcase #1
  [No citations.]
  \or [#2]
  \else [#2]
  \fi }
   \def\MR#1{}
\numberwithin{equation}{section}
\theoremstyle{plain}
\newtheorem{theorem}[equation]{Theorem}
\newtheorem{lemma}[equation]{Lemma}
\newtheorem{corollary}[equation]{Corollary}
\newtheorem{proposition}[equation]{Proposition}
\newtheorem*{namedtheorem}{\theoremname}
\newcommand{\theoremname}{testing}
\theoremstyle{definition}
\newtheorem{definition}[equation]{Definition}
\newtheorem{remark}[equation]{Remark}
\title[Hardness of computation of quantum invariants on restricted topologies]{Hardness of computation of quantum invariants on 3-manifolds with restricted topology}
\author{Henrique Ennes}\thanks{INRIA d'Universit\'e C\^ote d'Azur, \texttt{henrique.lovisi-ennes@inria.fr}.}
\author{Cl\'ement Maria}\thanks{INRIA d'Universit\'e C\^ote d'Azur, \texttt{clement.maria@inria.fr}.}
\begin{document}

\begin{abstract}
Quantum invariants in low-dimensional topology offer a wide variety of valuable invariants about knots and 3-manifolds, presented by explicit formulas that are readily computable.
Their computational complexity has been actively studied and is tightly connected to topological quantum computing. 
In this article, we prove that for any 3-manifold quantum invariant in the Reshetikhin-Turaev model, there is a deterministic polynomial time algorithm that, given as input an arbitrary closed 3-manifold $M$, outputs a closed 3-manifold $M'$ with the same quantum invariant, such that $M'$ is hyperbolic, contains no low genus embedded incompressible surface, and is presented by a strongly irreducible Heegaard diagram. 
Our construction relies on properties of Heegaard splittings and the Hempel distance. 
At the level of computational complexity, this proves that the hardness of computing a given quantum invariant of 3-manifolds is preserved even when severely restricting the topology and the combinatorics of the input. 
This positively answers a question raised by Samperton \cite{samperton2023topological}.
\end{abstract}

\maketitle
\section{Introduction}
\label{sec:typesetting-summary}
{\em Quantum invariants} are topological invariants defined using tools from physics, explicitly, from topological quantum field theories (TQFTs). 
These invariants have become of interest for modeling phenomena in condensed matter physics~\cite{Arouca_2022, Fradkin_2024}, topological quantum computing~\cite{KITAEV20032}, and experimental mathematics~\cite{chen2018quantum,MariaR21}, where many deep conjectures remain open.
Thanks to their diversity and discriminating power to distinguish between non-equivalent topologies, they have also played an important role in the constitution of censuses of knots and 3-manifolds~\cite{burton:LIPIcs.SoCG.2020.25}. 
The invariants are constructed from the data of a fixed algebraic object, called a {\em modular category}, and a topological support, and take the form of a partition function, whose value depends solely on the topological type of the support and not on its combinatorial presentation.
Generally, quantum invariants are defined for presentations of either knots or 3-manifolds, although there are types of invariants, such as the Reshetikhin-Turaev, whose definitions naturally encompass both objects.

The complexity of the exact and approximate computation of these invariants has attracted much interest, particularly in connection with quantum complexity classes.
Most non-trivial quantum invariants turn out to be \#\texttt{P}-hard to compute and, sometimes, even \#\texttt{P}-hard to approximate~\cite{alagic2014quantum,freedmanLW2002topological,kuperberg2009hard} within reasonable precision. 
This is the case of the Jones polynomial for knots~\cite{freedmanLW2002topological,kuperberg2009hard} and the Turaev-Viro invariant of 3-manifolds associated with the Fibonacci category~\cite{alagic2014quantum, freedmanLW2002topological}. 
On the positive side, polynomial time {\em quantum} algorithms exist for computing weak forms of approximations~\cite{aharonovJL2009polynomial,aradL2010quantum,freedmanLW2002topological} and efficient {\em parameterized} algorithms have been designed~\cite{burton:LIPIcs.SoCG.2018.18,BurtonMS18,MAKOWSKY2003742,Maria21,MariaS20} leading to polynomial time algorithms on certain families of instances. 
For the latter, the topology of the input knot or 3-manifold plays a central role in measuring the computational complexity of the problem, either directly with running times depending strongly on some topological parameter~\cite{delaney2024algorithmtambarayamagamiquantuminvariants, MariaS20}, or indirectly where {\em simple topologies} guarantee the existence of simple combinatorial representations~\cite{huszar_et_al:LIPIcs.SoCG.2019.44, Huszár_2022, MariaP19} that can be in turn algorithmically exploited.

There are other instances, however, where the hardness of computing the invariants is preserved even if the topology and combinatorics of the input are restricted. In~\cite{kuperberg2009hard}, Kuperberg shows that, for certain quantum invariants that are {\em hard to approximate} on {\em links}, the hardness is preserved when restricted to {\em knots}. In a follow-up work, Samperton~\cite{samperton2023topological} proves that if computing a quantum invariant is hard for all input diagrams of any knot, then the computation remains hard when restricting the input to {\em hyperbolic} knots given by diagrams with a minimal number of crossings. In this article, we follow a similar path to both \cite{kuperberg2009hard} and \cite{samperton2023topological}, this time for quantum invariants of 3-manifolds, by proving the hardness of computing invariants of irreducible presentations and hyperbolic manifolds.


We follow the strategy of Samperton~\cite{samperton2023topological} which consists of using Vafa's theorem \cite{vafa1988toward} to complicate, in polynomial time, the topological structure of the input without changing the invariant. Nonetheless, while Samperton's process involves adding extra crossings to the knot diagrams, we increase the \textit{Hempel} \textit{distance} of a \textit{Heegaard diagram} of some 3-manifold. Although there exists an extensive catalog of algorithms to increase Hempel distances \cite{evans2006high, hempel20013, ido2014heegaard,johnson2013non, lustig2009high, qiu2015heegaard, yoshizawa2014high}, to the best of our knowledge, our work is the first to 1. explicitly compute the involved complexities, ensuring polynomial time; and, 2. keep some 3-manifold invariant constant throughout the process.

The main result is expressed in Theorem \ref{th: properties}, whose precise statement can be found in Section \ref{sec: main results}. 
Here and throughout the paper, we denote by $\Sigma_g$ the \textit{closed surface of genus} $g$ (which is unique up to homeomorphism/diffeomorphism) with some fixed orientation and assume $g\geq 2$.
When referring to a general compact surface (potentially with boundary), we use $\Sigma$.

\begin{theorem}
Let $\calC$ be a modular category and $M$ a closed 3-manifold represented by a Heegaard diagram $(\Sigma_g,\alpha, \beta)$ of complexity $m$. 
There is a deterministic algorithm that constructs, in time $O(\poly(m,g))$ and uniformly on the choice of $\calC$, a strongly irreducible Heegaard diagram $(\Sigma_{g+1},\alpha', \beta')$ representing a hyperbolic 3-manifold $M'$ that shares with $M$ the Reshetikhin-Turaev invariant over $\calC$.
Moreover, for a fixed choice of $k\in \N$, $M'$ has no embedded orientable and incompressible surface of genus at most $2 k$.
\end{theorem}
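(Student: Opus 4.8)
The plan is to leave the Reshetikhin--Turaev invariant fixed while artificially inflating the Hempel distance of the Heegaard splitting, and then to extract hyperbolicity, strong irreducibility and the absence of low‑genus incompressible surfaces from standard curve‑complex estimates. First I would stabilize $(\Sigma_g,\alpha,\beta)$ once: this turns the (a priori arbitrary) input into a reducible genus‑$(g+1)$ splitting of the \emph{same} manifold $M$ carrying a standard trivial handle, which is a clean base for a distance‑increasing procedure. Writing the stabilized splitting as $M=H\cup_\phi H'$ with gluing $\phi\in\Mod(\Sigma_{g+1})$, the construction then consists of replacing $\phi$ by $\psi\circ\phi$ for a mapping class $\psi$ engineered so that: (i) $\psi$ lies in the kernel of the Reshetikhin--Turaev representation $\rho_\calC$ of $\Mod(\Sigma_{g+1})$; (ii) $\psi$ acts trivially on $H_1(\Sigma_{g+1};\Z)$; (iii) the $\beta$-disk set is pushed far, in the curve complex of $\Sigma_{g+1}$, from the $\alpha$-disk set; and (iv) $\psi$ is a product of a number of Dehn twists bounded independently of $m$ and $g$, so that the new curves $\beta'=\psi(\beta)$ still have complexity $O(\poly(m,g))$.

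Properties (i) and (ii) are where Vafa's theorem enters: for each modular category $\calC$ there is an integer $N=N(\calC)$, depending only on $\calC$, with $\rho_\calC(t_\gamma)^{N}=\mathrm{id}$ for every simple closed curve $\gamma$, so taking $\psi$ to be a product of powers $t_{\gamma_\ell}^{N}$ gives $\rho_\calC(\psi)=\mathrm{id}$; choosing the $\gamma_\ell$ to be \emph{separating} additionally places $\psi$ in the Torelli group. With $\rho_\calC(\psi)=\mathrm{id}$ the operator $\rho_\calC(\psi\phi)$ agrees with $\rho_\calC(\phi)$, and because $\psi$ is Torelli the residual framing‑anomaly correction in the Heegaard‑splitting formula for $\mathrm{RT}_\calC$---which depends on $\phi$ only through its image in the symplectic group---is also unchanged; hence $\mathrm{RT}_\calC(M')=\mathrm{RT}_\calC(M)$. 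Since the construction depends on $\calC$ only through the single exponent $N$, the recipe is uniform in $\calC$.

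The core of the argument is (iii)--(iv): producing, in polynomial time, an explicit product $\psi=t_{\gamma_1}^{N}\cdots t_{\gamma_L}^{N}$ of separating twists for which the new Heegaard diagram has Hempel distance at least $D:=\max(4k+1,3)$. Here I would follow the explicit high‑distance constructions (Thurston's filling‑pair trick, and the work of Evans, Lustig--Moriah and others), building step by step a path in the curve complex of $\Sigma_{g+1}$ that starts in the $\alpha$-disk set, whose $\ell$-th step is a twist along a carefully chosen separating curve $\gamma_\ell$ so that no backtracking occurs, and that realizes $\psi$; since a single separating twist moves a multicurve a bounded amount in the curve complex, reaching distance $D$ costs only $L=O(k)$ twists---a constant as far as the input $(m,g)$ is concerned---so $\beta'=\psi(\beta)$ has $O(\poly(m,g))$ intersections with $\alpha'=\alpha$. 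Granting the lower bound $d(\alpha',\beta')\ge D$, Casson--Gordon gives strong irreducibility (distance $\ge 2$), Hempel's criterion plus geometrization gives hyperbolicity (distance $\ge 3$, ruling out reducibility, toroidality and small Seifert‑fibered pieces), and Hartshorn's inequality---an embedded orientable incompressible surface of genus $h$ forces distance $\le 2h$---rules out such surfaces of genus $\le 2k$ once $D>4k$.

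The main obstacle, as I see it, is carrying out (iii) and (iv) \emph{together}: the usual way to manufacture arbitrarily high distance is to iterate a pseudo‑Anosov of $\Sigma_{g+1}$, but its curve‑complex translation length degrades like $1/g^{2}$, so reaching even a fixed distance by iteration would require a number of twists growing with $g$ and would blow up the complexity super‑polynomially; the resolution is precisely that the target distance is the constant $D=\max(4k+1,3)$ (with $k$ fixed), which can be reached by a short, explicitly geodesic word rather than by iteration, keeping $L=O(k)$ and the complexity polynomial. A secondary but non‑trivial chore is the bookkeeping of the framing anomaly and the verification that the curves $\gamma_\ell$, the word $\psi$, the diagram $(\Sigma_{g+1},\alpha',\beta')$, and the certificate that $d(\alpha',\beta')\ge D$ are all producible in time $O(\poly(m,g))$.
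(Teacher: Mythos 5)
Your overall skeleton matches the paper's: stabilize once, modify $\beta$ by powers of Dehn twists that are multiples of the Vafa constant $N(\calC)$ so the Reshetikhin--Turaev invariant is untouched, drive the Hempel distance above $\max(2k,3)$, and then quote Casson--Gordon, Hempel--Thompson--Perelman, and Hartshorn/Scharlemann--Tomova. Two remarks on the invariance step: the form of Vafa's theorem the paper uses is already stated at the level of the invariant of the Heegaard diagram ($\langle(\Sigma_g,\alpha,\beta)\rangle=\langle(\Sigma_g,\alpha,\tau_s^{kN}(\beta))\rangle$ for \emph{any} simple closed curve $s$), so your restriction to separating twists and the Torelli group to control the framing anomaly is unnecessary --- and it is actively harmful, because the distance-growth machinery (diskbusting curves, Yoshizawa's twist inequalities) produces non-separating twisting curves and there is no reason a product of separating twists realizing the needed geometry exists with controlled complexity.

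The genuine gap is in your steps (iii)--(iv). The assertion that distance $D$ can be reached with $L=O(k)$ twists because ``a single separating twist moves a multicurve a bounded amount in the curve complex'' gives only an \emph{upper} bound on how far each twist can move things, i.e.\ a lower bound on how many twists are \emph{necessary}; it says nothing about sufficiency. A twist about an arbitrary (even separating, even filling) curve need not increase $d(K_\alpha,K_\beta)$ at all. To guarantee an increase one must twist about a curve $s$ that is already provably far from \emph{both} disk sets, and constructing and certifying such an $s$ is the entire technical content of the theorem: the paper does it by (a) building a curve seamed for a pants decomposition equivalent to $\gamma$, which is diskbusting by Starr's criterion, (b) boosting $d(K_\gamma,s)$ from $2$ to $3$ and then to $k$ by $O(\log k)$ rounds of high-power twisting via Yoshizawa's inequalities, and (c) a three-way case analysis to obtain a single curve far from both $K_{\alpha'}$ and $K_{\beta'}$ --- precisely because the algorithm cannot \emph{decide} which distances are large, it outputs three candidate diagrams, one of which is guaranteed to work, rather than certifying a geodesic word as you propose (no polynomial-time certification of curve-complex distance is known). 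Finally, each twisting round cubes the intersection complexity of the curves, so the resulting diagram has complexity $O((gm|T|)^{c(k)})$ with exponent depending on $k$; your claim that the twisting curves and the word $\psi$ have complexity polynomial of degree independent of $k$ is not achieved by any known construction and is not needed for the statement, which fixes $k$ in advance.
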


\begin{remark}  
    Hyperbolicity has historically been used as both a simplifying structure and an intermediate step for algorithms on 3-manifold, see for example \cite{kuperberg2019algorithmic,scull2021homeomorphismproblemhyperbolicmanifolds}. 
    Similarly to \cite{samperton2023topological} for knots, our result proves that hyperbolicity is of no help for the computational complexity of the quantum invariant.
    On the other hand, when producing hard instances of 3-manifolds in computational topology---\eg, in complexity reduction~\cite{agol2006computational, bachman2017computing} or the construction of combinatorially involved manifolds~\cite{huszar_et_al:LIPIcs.SoCG.2023.42}---it is common to produce \emph{Haken} 3-manifolds with low genus incompressible surfaces (generally, tori).
    It is an important open question to understand the hardness of computation for non-Haken 3-manifold. Our result shows that the computational complexity of quantum invariants is preserved even when getting rid of low-genus incompressible surfaces. 
\end{remark}

The paper is divided into four parts: a review of background material (Section \ref{sec: background}), the demonstration of some auxiliary algorithms (Section \ref{sec: algorithms}), the proof our main result (Section \ref{sec: main results}), and some illustration of its computational consequences for the hardness of computing quantum invariants (Section \ref{sec: computational consequences}).

\section*{Acknowledgement} We are grateful to Lukas Woike, Alex He, and Nicolas Nisse for helpful discussions on Vafa's theorem, triangulations of Heegaard splittings, and general complexity theory, respectively. We also thank the anonymous reviewers for their thoughtful comments and constructive feedback, which greatly contributed to improving the clarity, rigor, and quality of this paper. This work has been partially supported by the ANR project ANR-20-CE48-0007 (AlgoKnot).

\section{Background material}\label{sec: background}
In the following review, we assume acquaintance with the basic ideas from geometric topology, such as boundaries, compactness, homeomorphisms, (free) homotopies, isotopies, and manifolds. 
For these topics, we refer the reader to \cite{schultens2014introduction} and \cite{singer2015lecture}. 
Moreover, we shall use, without properly defining, some well-known concepts belonging to the theories of curves and surfaces, which can be found in \cite{farb2011primer}. 

\subsection{Curves in surfaces}
A \textit{simple closed curve} in a surface $\Sigma$ is a connected component of the image of a proper embedding $S^1\hookrightarrow \Sigma$. 
We will often refer to a simple closed curve by the terms \textit{closed curve} or just \textit{curve}. 
A \textit{multicurve} is a finite collection of disjoint properly embedded simple curves in $\Sigma$.
We denote by $\#\gamma$ its number of connected components. 
Whenever possible, we distinguish simple curves from multicurves by using Greek letters to represent the latter.

A curve in the surface will be called \textit{essential} if not homotopic to a point (or equivalently, if it does not bound a disk on the surface), a puncture, or a boundary component. 
A multicurve is essential if all its components are essential.
Similarly, a properly embedded \emph{arc} (i.e., with endpoints in $\partial \Sigma$) is essential if it does not cobound a disk on $\Sigma$ with a component of $\partial\Sigma$.
Unless otherwise stated, all arcs, curves, and multicurves will be assumed essential.


We will often be interested not in a curve $s$, but in the equivalence classes of $s$ up to ambient \emph{isotopies} in $\Sigma_g$, $[s]$.
Being essential is preserved under isotopies, so we naturally extend the definition of essential curves to their isotopy classes.
For two curves $s,t$ in some surface $\Sigma$, their \textit{geometric intersection number} is defined as the minimal number of their intersection points up to isotopy, that is $i(s,t) = \min\{|s'\cap t'|: s'\in [s],t'\in [t]\}$.
Two curves in a surface $\Sigma_g$ are isotopic if and only if they are (free) homotopic \cite{farb2011primer}.

The \textit{curve graph} of a closed surface $\Sigma_g$, $C(\Sigma_g)$, is the graph whose vertices are isotopy classes of essential curves, and any two vertices $[s]$ and $[t]$ are connected by an edge if and only if $i(s,t)=0$. The usual graph distance $d$ defines a metric on the vertices $C(\Sigma_g)$ that is interpreted as $d([s],[t])=n$ implying the existence of a sequence of essential curves $r_0,r_1,\dots,r_n$ with $r_0\in [s]$, $r_n\in [t]$ and $r_i\cap r_{i+1}=\emptyset$ for $0\leq i < n$. The definitions related to the curve graph can be naturally extended to curves by identifying the elements of an isotopy class to the same vertex; in particular, $d(r,s)=0$ is equivalent to $r$ and $s$ being isotopic. 


\begin{figure}
\centering
\includegraphics[width=0.7\textwidth]{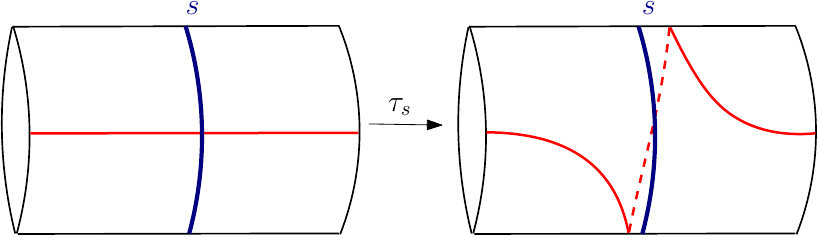}
\caption{Illustration of the action of a Dehn twist about a curve $s$ (blue) on some curve transversal to it (red). Note that the homeomorphism is only different from the identity on a regular neighborhood of $s$.}\label{fig: dehn twist}
\end{figure}

We recall that for each surface $\Sigma_g$, its \textit{mapping class group}, $\Mod(\Sigma_g)$, is the group of orientation-preserving homomorphism $\Sigma_g\to \Sigma_g$ up to isotopies. Its canonical action on the surface conserves the geometric intersection number between curves \cite{farb2011primer}, acting, therefore, isometrically on $(C(\Sigma_g),d)$ by the induced map $\phi\cdot [s]=\phi_*([s])$. The mapping class groups always contain the \textit{Dehn twists}, homeomorphisms $\tau_s:\Sigma_g\to \Sigma_g$ defined by cutting off a local neighborhood of the (multi)curve $s$ of $\Sigma_g$ and gluing it back with a $2\pi$ counterclockwise twist, determined by the orientation of the surface (Figure \ref{fig: dehn twist}). In particular, for each $g\geq 2$, $\Mod(\Sigma_g)$ is generated as a group by Dehn twists about the $3g-1$ recursively defined \textit{Lickorish curves} in $\Sigma_g$ (Figure \ref{fig: lickorish}) \cite{lickorish1964finite}.  In this paper, we will always assume that a homeomorphism class in $\Mod(\Sigma_g)$ is of the form $\phi=\tau^{n_r}_{s_r}\circ \dots \circ \tau^{n_1}_{s_1}$, where $n_i\in \Z$ and $s_i$ are Lickorish curves for all $1\leq i \leq r$.

\begin{figure}
\centering
\includegraphics[width=0.5\textwidth]{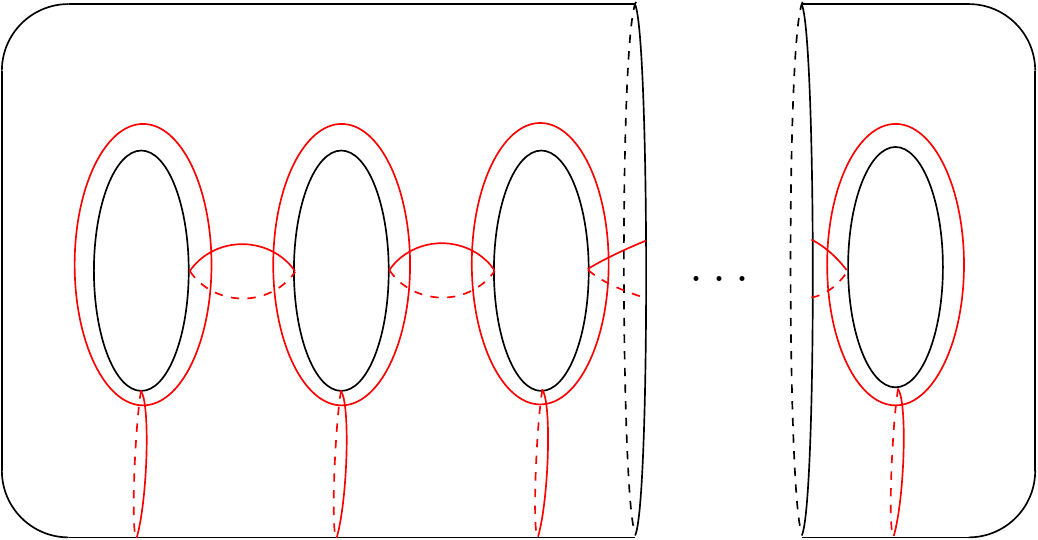}
\caption{}\label{fig: lickorish}
\end{figure}

\subsection{Curves on handlebodies and Heegaard splittings}
An essential multicurve $\gamma \subset C(\Sigma_g)$ is a \textit{(full) system} if no two components are isotopic to each other and $\Sigma_g- \gamma$ is a union of $\#\gamma-g+1$ punctured spheres (here the minus sign indicates cutting the surface along $\gamma$). The minimum number of connected components that a full system may have is $g$, in which case $\Sigma_g- \gamma$ is a $2g$-punctured sphere and $\gamma$ is called a \textit{minimum} system. On the other extreme, a system $\gamma$ is \textit{maximum} or a \textit{pants decomposition} when $\#\gamma=3g-3$ and $\Sigma_g-\gamma$ is the union of $2g-2$ thrice punctured spheres, also known as \textit{pairs of pants} (see Figure \ref{fig: seams}). When every connected component of a system $\gamma$ is isotopic to a connected component of another system $\gamma'$, we say that $\gamma$ is \textit{contained} in $\gamma'$ and denote that by $\gamma\subseteq \gamma'$. We note that a minimum system $\gamma$ can always be (non-uniquely) extended to a pants decomposition $\rho\supseteq \gamma$, refer to Theorem \ref{th: minimal to pants}.

The genus $g$ \textit{handlebody} {constructed through a full system} $\gamma$ in the surface $\Sigma_g$ is the 3-manifold
\begin{equation*}
    V_\gamma = \Sigma_g\times [0,1]\cup_{\gamma \times \{0\}} 2\text{-handles}\cup 3 \text{-handles}
\end{equation*}
built by attaching the 2-handles along the curves $\gamma$ in $\Sigma\times \{0\}$ and then filling any resulting $S^2$ boundary component with 3-handles. By construction $\partial V_\gamma=\Sigma_g$. Because the curves in the system $\gamma$ are assumed to be essential, each of its components will bound (non-trivial) compression disks in $V_\gamma$: they will be \textit{meridians} of the handlebody. There are, however, many other meridians in $V_\gamma$, as it will be implied by the following definition.

\begin{definition}[Disk graph and equivalent systems]\label{def: disk graph}
    Let $V_\gamma$ be a handlebody constructed over $\Sigma_g$. 
    Then the disk graph of $V_\gamma$, $K_\gamma$, is the subgraph of $C(\Sigma_g)$ whose vertices represent (isotopy classes of) meridians of $V_\gamma$. 
    We say that two full systems, $\gamma$ and $\gamma'$, (potentially with $\#\gamma\neq \#\gamma'$) are \textit{equivalent} if  $\gamma'\subset K_\gamma$ and $\gamma\subset K_{\gamma'}$. 
\end{definition}
 Note that $\gamma$ is equivalent to $\gamma'$ if and only if they define the same handlebody.
 In particular, if $\gamma\subset \rho$, $\gamma$ and $\rho$ are equivalent.

In a seminal work, Hempel \cite{hempel20013} studied the metric $d$ of the disk graph canonically inherited from $C(\Sigma_g)$. 
This inspires the next definition. 
Here and throughout, whenever $A,B\subset C(\Sigma)$ and $r$ is a curve in $\Sigma$, we naturally extend the metric $d$ to $d(r,A)=\min \{d(r,s):s\in A\}$ and $d(A,B)=\min\{d(s,t):s\in A,t\in B\}$.
We always assume a curve $s$ on a handlebody $V_\gamma$ to be fully contained in $\partial V_\gamma$. 

\begin{definition}[Diskbusting curves]
    An essential curve $s$ on a handlebody $V_\gamma$ is said to be \textit{diskbusting} if $d(s,K_\gamma)\geq 2$, that is, if $s$ intersects all meridians of $V_\gamma$.
\end{definition}
\cite{starr1992curves} provides a combinatorial condition to verify if a curve on a handlebody is diskbusting, which we quote using the more appropriate language of \cite{yoshizawa2014high}. Before, however, we will need a definition.

\begin{definition}[Seams and seamed curves]
    An arc in a pair of pants ${P}$ is called a \emph{seam} if it has endpoints on two distinct components of $\partial P$. 
    A curve $s$ in a surface $\Sigma_g$ with a pants decomposition $\rho$ is said to be \emph{seamed} for $\rho$ if, for every pair of pants component $P$ of $\Sigma_g- \rho$, $s\cap P$ has at least one copy of each of the three types (up to isotopy) of seams in $P$. 
\end{definition}

\begin{theorem}[Theorems 1 of \cite{starr1992curves}, Theorem 4.11 of \cite{yoshizawa2014high}]\label{th: diskbusting}
Let $s$ be a curve on the handlebody $V_\gamma$. Then $s$ is diskbusting in $V_\gamma$ if and only if there is a pants decomposition $\rho$ equivalent to $\gamma$ such that $s$ is seamed for $\rho$.
\end{theorem}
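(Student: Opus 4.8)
The plan is to prove the two implications separately. The "if" direction (a seamed pants decomposition forces diskbustedness) admits a self‑contained disk–surgery argument, while the converse "only if" direction (diskbustedness produces a seamed pants decomposition) is the harder half and I would follow the Whitehead‑graph analysis of Starr.

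\emph{Proof of the "if" direction.} Suppose $\rho$ is a pants decomposition equivalent to $\gamma$, so $V_\rho=V_\gamma$, and that $s$ is seamed for $\rho$; assume towards a contradiction that $s$ is not diskbusting. Recall that $V_\rho$ is the regular neighborhood of the trivalent graph dual to $\rho$, so cutting $V_\rho$ along a family of disjoint meridian disks $\{D_c\}_{c\in\rho}$ bounded by the curves of $\rho$ yields one $3$‑ball $B_v$ per pair of pants $P_v$ of $\Sigma_g-\rho$, with $\partial B_v$ the sphere obtained from $P_v$ by capping its three boundary circles with scars of the $D_c$. Since $s$ is not diskbusting there is a meridian disk $D$ of $V_\rho$ with $\partial D\cap s=\emptyset$ (possibly $\partial D=s$, if $s$ is itself a meridian); choose such a $D$ minimizing $|D\cap\bigcup_c D_c|$, and put $s$ and $\partial D$ each in minimal position with respect to $\rho$. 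A seamed curve meets every curve of $\rho$ (a curve disjoint from some $c\in\rho$ would, in the two pants adjacent to $c$, omit the two seam types incident to $c$). If $D$ is disjoint from all the $D_c$ then $D$ lies in a single ball $B_v$, so $\partial D$ is an essential curve in the pair of pants $P_v$, hence isotopic to a curve of $\rho$, contradicting $\partial D\cap s=\emptyset$.

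Otherwise, after removing circle components of $D\cap\bigcup_c D_c$ by innermost‑disk surgery in the interior of $V_\rho$ (which fixes $\partial D$), take an arc $\beta$ of $D\cap D_c$ outermost in $D$; it cuts off $D'\subseteq\overline{B_v}$ with $\partial D'=\alpha\cup\beta$, where $\alpha\subseteq\partial D$ lies in $P_v$ with both endpoints on the same boundary circle $c$. If $\alpha$ were inessential in $P_v$ it would cobound a bigon with a sub‑arc of $c$, and a standard innermost‑bigon argument would contradict the minimality of $|D\cap\bigcup_c D_c|$ (or the minimal position of $\partial D$ with $\rho$); so $\alpha$ is essential and separates $P_v$ into two annuli containing the two boundary curves $\partial',\partial''$ of $P_v$ other than $c$. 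Since $s$ is seamed, the arcs of $s\cap P_v$ include an essential arc from $\partial'$ to $\partial''$, which must cross $\alpha$; but $\alpha\subseteq\partial D$ is disjoint from $s$ (and if $\alpha\subseteq s$, it cannot cross another arc of the simple closed curve $s$). This contradiction proves $s$ is diskbusting.

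\emph{Proof of the converse.} Here I would argue by an extremal choice followed by handle slides. First, any essential curve disjoint from a cut system of meridians of $V_\gamma$ is itself a meridian (a disk for it, pushed off the cut system, is a meridian disk), so a pants decomposition refining a cut system of meridians is automatically equivalent to $\gamma$; such decompositions exist by Theorem~\ref{th: minimal to pants}. Among all pants decompositions $\rho$ equivalent to $\gamma$, choose one minimizing $\sum_{c\in\rho}i(s,c)$ and suppose it is not seamed, so some pair of pants $P_v$ of $\Sigma_g-\rho$ omits a seam type. The plan is to turn this missing seam into a forbidden configuration: cut $V_\gamma$ along a cut system $\delta\subseteq\rho$ to a $3$‑ball, encode $s$ by its Whitehead graph on the $2g$ resulting scars, and show the omitted seam forces an admissible cut vertex of that graph. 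Whitehead's lemma then converts this cut vertex either into a handle slide producing a meridian pants decomposition with strictly smaller total intersection with $s$ — contradicting minimality — or directly into an essential disk of $V_\gamma$ disjoint from $s$ — contradicting diskbustedness. Either way $\rho$ must be seamed.

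The technical heart, and the main obstacle, is precisely this converse step: translating "$s\cap P_v$ omits a seam type" into an admissible cut vertex of the Whitehead graph, and then into an explicit isotopy lowering $i(s,\rho)$ within the family of meridian pants decompositions; this is where I would follow the detailed combinatorics of Starr and Yoshizawa rather than reproduce it. For what it is worth, only the "if" direction is needed in the sequel, where we certify that the curves appearing in our Heegaard diagrams are diskbusting by exhibiting explicit seamed pants decompositions.
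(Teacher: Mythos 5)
The paper does not actually prove this statement: it is imported verbatim from Starr and from Yoshizawa (Theorem~4.11), so there is no internal proof to compare against. Your proposal is therefore more ambitious than the paper on the ``if'' half, and on the ``only if'' half it is explicitly a plan rather than a proof (you defer the Whitehead-graph/cut-vertex analysis to Starr); as a standalone proof of the full equivalence it is incomplete, although you are right that only the ``if'' direction is used later (Theorem~\ref{th: get diskbusting}, via Theorem~\ref{th: yoshi 2}).

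The substantive problem is a gap in the ``if'' argument you do claim as self-contained. After choosing the outermost arc $\beta\subset D\cap D_c$ you assert that $\alpha\subset\partial D$ lies in a pair of pants $P_v$ ``with both endpoints on the same boundary circle $c$''. That is not forced: the endpoints of $\alpha$ lie on the curve $c$, but if $c$ bounds the same pair of pants on both sides, $P_v$ has two boundary circles identified to $c$ and $\alpha$ may join them. This situation is not exotic -- it occurs precisely for the pants decompositions the paper builds in Theorem~\ref{th: minimal to pants}, where a curve of $\gamma$ may be banded to its own twin, so the two copies of that curve lie on one pair of pants. In that case $\alpha$ is itself a seam (of type $c^{(1)}c^{(2)}$), cutting $P_v$ into a single annulus rather than two annuli separating the remaining boundary circles, and your concluding contradiction evaporates: all three seam types of $s$ can be realized disjointly from $\alpha$ (the three seam types admit pairwise disjoint representatives, and two disjoint seams of the same type are parallel), so seamedness of $s$ does not force $s\cap\alpha\neq\emptyset$. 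Handling this case needs a genuinely additional step (e.g.\ boundary-compressing $D_c$ along the outermost disk and setting up an induction, or reverting to Starr's argument), not just the dichotomy you give. A secondary caveat: the paper's definition of ``seamed'' is representative-dependent, and your step ``put $s$ in minimal position with respect to $\rho$'' is not innocent -- removing a bigon pushes an arc across a pants curve and merges two arcs in the adjacent pair of pants, which can destroy the last seam of a given type there; likewise keeping $\partial D$ simultaneously disjoint from $s$, bigon-free with $\rho$, and with $|D\cap\bigcup_c D_c|$ minimal requires an argument, since removing a $\partial D$--$\rho$ bigon may create intersections with $s$. These points are exactly where the cited proofs of Starr and Yoshizawa do real work.
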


A closed 3-manifold $M$ is said to have a \emph{Heegaard splitting} if it is the union of two handlebodies intersecting only at their common boundary. 
It is well-known \cite{saveliev2011lectures} that there exists, for every closed 3-manifold $M$, a tuple $(\Sigma_g,\alpha,\beta)$ called a \emph{Heegaard diagram (of genus $g$)}, where $\alpha$ and $\beta$ are two full systems with the same cardinality in $\Sigma_g$, and a Heegaard splitting $M = V_\alpha \cup_{\Sigma_g} V_\beta$. 
Note, on the other hand, that neither the Heegaard diagram nor the splitting is unique: for example, isotopies to $\alpha$ or $\beta$ yield the same splitting, whereas given a diagram $(\Sigma_g, \alpha,\beta)$, one can define another splitting for the same manifold, this time with diagram $(\Sigma_{g+1},\alpha\cup\{c\},\beta\cup \{c'\})$, where $c$ and $c'$ are curves with $i(c,c')=1$ fully contained in the extra handle. 
This last process, known as \textit{stabilization}, is topologically equivalent to directly summing a copy of $S^3$ to the original manifold $M$.

Some properties of 3-manifolds can be read straight-up from their Heegaard splittings. For example, a splitting $(\Sigma_g, \alpha, \beta)$ is called \textit{irreducible} if there is no essential curve $s$ in $\Sigma_g$ that is a meridian for both $V_\alpha$ and $V_{\beta}$. Haken's lemma \cite[Theorem 6.3.5]{schultens2014introduction} implies that reducible closed 3-manifolds cannot have irreducible Heegaard splittings. Similarly, a splitting $(\Sigma_g, \alpha , \beta)$ is \textit{strongly irreducible} if there are no two disjoint essential curves $a$ and $b$ in $\Sigma_g$ such that $a$ is a meridian of $V_{\alpha}$ and $b$ is a meridian of $V_{\beta}$. Every strongly irreducible splitting is irreducible, but the converse is not true (Haken manifolds provide a list of counterexamples \cite{schultens2014introduction}). 

Given two handlebodies $V_\alpha$ and $V_\beta$, we define their \textit{Hempel distance} by $d(K_\alpha,K_\beta)$. In \cite{hempel20013}, Hempel argued that this distance could be seen as a measure of the complexity of a Heegaard splitting $V_\alpha \sqcup_{\Sigma_g} V_\beta$, which is translated into the following theorem.

\begin{theorem}\label{th: hempel}
Let $(\Sigma_g,\alpha,\beta)$ be a Heegaard diagram of distance $d(K_\alpha,K_\beta)=k$. Then
    \begin{itemize}
    \item $k\geq 1$ if and only if $(\Sigma_g,\alpha,\beta)$ is irreducible;
    \item $k\geq 2$ if and only if $(\Sigma_g,\alpha,\beta)$ is strongly irreducible;
    \item if $k\geq 3$, then $M$ is hyperbolic;
    \item $M$ has no orientable and incompressible embedded surface of genus smaller than $2k$.
\end{itemize}
\end{theorem}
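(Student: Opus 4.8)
The plan is to treat the four bullets separately, since the first two are a direct translation of the definitions of (strong) irreducibility through the curve-graph metric, while the last two are quotations of deep theorems --- Hempel's distance bounds together with the Geometrization Theorem, and Hartshorn's bound --- which I would invoke rather than reprove. For the first bullet, unwind $d(K_\alpha,K_\beta)=\min\{d(a,b):a\in K_\alpha,\ b\in K_\beta\}$. Since $d(a,b)=0$ exactly when $a$ and $b$ are isotopic, $d(K_\alpha,K_\beta)=0$ holds if and only if some isotopy class of essential curves belongs to both $K_\alpha$ and $K_\beta$, i.e. if and only if there is an essential curve in $\Sigma_g$ that is a meridian of both $V_\alpha$ and $V_\beta$ --- precisely the negation of irreducibility of $(\Sigma_g,\alpha,\beta)$. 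Hence $d(K_\alpha,K_\beta)\ge 1$ iff the splitting is irreducible. For the second bullet, recall that $d(a,b)\le 1$ iff $i(a,b)=0$; therefore $d(K_\alpha,K_\beta)\le 1$ iff there exist essential curves $a,b$ with $i(a,b)=0$, $a$ a meridian of $V_\alpha$ and $b$ a meridian of $V_\beta$. Since a meridian always has a disjoint parallel copy, this condition subsumes the reducible case and is exactly the negation of strong irreducibility; thus $d(K_\alpha,K_\beta)\ge 2$ iff the splitting is strongly irreducible, which in passing re-derives the nesting ``strongly irreducible $\Rightarrow$ irreducible''.

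For the third bullet, observe first that $d\ge 3\ge 1$ already makes the splitting irreducible, so by Haken's lemma (\cite[Theorem 6.3.5]{schultens2014introduction}) $M$ is an irreducible $3$-manifold. The role of the threshold $3$ is Hempel's analysis in \cite{hempel20013}: a closed $3$-manifold that is Seifert fibered or contains an incompressible torus admits only Heegaard splittings of distance at most $2$, so $d\ge 3$ forces $M$ to be atoroidal and not Seifert fibered. A closed orientable $3$-manifold that is irreducible, atoroidal and not Seifert fibered is hyperbolic by the Geometrization Theorem (Thurston for Haken manifolds, Perelman in general), which is the claim. Note that the irreducibility hypothesis is genuinely needed here, since a connected sum of hyperbolic pieces is reducible, atoroidal and not Seifert fibered yet not hyperbolic --- this is exactly why one first invokes Haken's lemma.

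For the fourth bullet --- where ``surface'' is to be read as \emph{incompressible} surface, consistently with the abstract and the main theorem --- I would quote Hartshorn's theorem: if $M$ contains a closed orientable incompressible surface of genus $h$, then every Heegaard splitting of $M$ has distance bounded above by a fixed linear function of $h$. The bullet is then exactly its contrapositive.

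The substantive content lies entirely in the third and fourth bullets, and I would not reprove it: the decisive inputs are Hempel's theorem that toroidal and Seifert-fibered manifolds carry only low-distance splittings, the Geometrization Theorem, and Hartshorn's sweep-out argument bounding Heegaard distance by incompressible-surface genus --- each a substantial theorem, and the place where all the difficulty sits; the crux is really the hyperbolicity statement, for which there is no elementary route. The first two bullets, by contrast, need no argument beyond making the curve-graph metric explicit; the single point that repays a moment's attention there is the observation that a meridian always admits a disjoint parallel copy, which is what folds the reducible case into the failure of strong irreducibility and keeps the two dichotomies nested as stated.
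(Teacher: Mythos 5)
Your proposal is correct and takes essentially the same route as the paper, whose proof is itself only a list of attributions: the Casson--Gordon rectangle condition (Kobayashi) for the first two bullets, Hempel together with Thompson and Perelman's Geometrization for the third, and Scharlemann--Tomova for the fourth. The only differences are cosmetic: you spell out the elementary curve-graph translation behind the first two bullets (correctly, including the parallel-copy observation that folds reducibility into the failure of strong irreducibility), and for the last bullet you cite Hartshorn rather than Scharlemann--Tomova, which is interchangeable here since the surface is read as incompressible.
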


\begin{proof}
The first two points are jointly known as Casson-Gordon rectangular conditions and were first published by Kobayashi \cite{kobayashi1988casson}. The third is a consequence of the already mentioned work by Hempel \cite{hempel20013} and the main result in \cite{thompson1999disjoint}, followed by the proof of the Geometrization Conjecture due to Perelman \cite{perelman2003finiteextinctiontimesolutions, perelman2003ricciflowsurgerythreemanifolds}. The last is due to Hartshorn \cite{hartshorn2002heegaard}.
\end{proof}

\begin{remark}
    Scharlemann and Tomova \cite{scharlemann2006alternate} proved that if $V_{\alpha'}\cup_{\Sigma_{g'}}V_{\beta'}$ is Heegaard splitting of genus smaller than $k/2$ then $V_\alpha \cup_{\Sigma_g} V_\beta$ is \emph{isotopic} to $V_{\alpha'}\cup_{\Sigma_{g'}}V_{\beta'}$, potentially after finitely many stabilizations. In particular, if $k>2g+2$, the splitting is of minimum genus. Unfortunately, as we will see in the proof of Theorem \ref{th: properties}, our algorithm is not polynomial time as function of $k$, which means that it does not imply an efficient reduction to a minimal genus splitting for every input.
\end{remark}

Our proof of Theorem \ref{th: properties} will mainly consist of increasing the Hempel distance so that the hypotheses of Theorem \ref{th: hempel} are satisfied. 
For such we will amply use the next two theorems due to Yoshizawa.

\begin{theorem}[Theorem 5.8 of \cite{yoshizawa2014high}]\label{th: yoshi}
    Consider the full systems of curves $\alpha$ and $\beta$ in $\Sigma_g$ and $n=\max\{1, d(K_{\alpha},K_{\beta})\}$. Let $d_i=d(K_i,s)$ for $i=\alpha,\beta$, assume $d_i\geq 2$ and $d_\alpha+d_\beta-2> n$. Then for any $k\in \Z^+$
    \begin{equation}\label{eq: yoshizawa inequality}
        \min(k,d_\alpha+d_\beta-2)\leq d(K_{\alpha}, K_{\tau^{k + n + 2}_s(\beta)})\leq d_\alpha+d_\beta.
    \end{equation}
\end{theorem}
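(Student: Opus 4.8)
The plan is to handle the two inequalities in \eqref{eq: yoshizawa inequality} separately; the right-hand one is immediate and the left-hand one carries all the content. For the upper bound I would route a geodesic path through $s$ itself. Since $d_\alpha\ge 2$, the curve $s$ meets every meridian of $V_\alpha$, so $s$ is essential and is a genuine vertex of $C(\Sigma_g)$. As the Dehn twist $\tau_s$ fixes $s$ and acts isometrically on $C(\Sigma_g)$, one gets $d\bigl(s,K_{\tau_s^{k+n+2}(\beta)}\bigr)=d\bigl(\tau_s^{-(k+n+2)}(s),K_\beta\bigr)=d(s,K_\beta)=d_\beta$, and the triangle inequality gives $d\bigl(K_\alpha,K_{\tau_s^{k+n+2}(\beta)}\bigr)\le d(K_\alpha,s)+d\bigl(s,K_{\tau_s^{k+n+2}(\beta)}\bigr)=d_\alpha+d_\beta$.

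For the lower bound, write $m=k+n+2$. Since $d\bigl(K_\alpha,K_{\tau_s^m(\beta)}\bigr)=\min\{\,d(a,\tau_s^m(b)):a\in K_\alpha,\ b\in K_\beta\,\}$, it suffices to bound $d(a,\tau_s^m(b))$ from below for an arbitrary pair of meridians $a$ of $V_\alpha$ and $b$ of $V_\beta$. I would fix such a pair, set $D=d(a,\tau_s^m(b))$, choose a geodesic $a=c_0,c_1,\dots,c_D=\tau_s^m(b)$ in $C(\Sigma_g)$, and split according to whether this geodesic ever reaches the $1$-neighbourhood of $s$. In the first case, some $c_j$ is disjoint from $s$ (it cannot be isotopic to $s$, which is disk-busting in $V_\alpha$); then, using $d(a,s)\ge d_\alpha$, $d(\tau_s^m(b),s)=d(b,s)\ge d_\beta$ and $d(c_j,s)\le 1$, the triangle inequality forces $j\ge d_\alpha-1$ and $D-j\ge d_\beta-1$, hence $D\ge d_\alpha+d_\beta-2$, which already exceeds $\min(k,d_\alpha+d_\beta-2)$. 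The hypothesis $d_\alpha+d_\beta-2>n$ plays a second role here: the same computation applied to any geodesic realising $d(K_\alpha,K_\beta)=n'\le n$ shows that \emph{that} geodesic cannot meet the $1$-neighbourhood of $s$ either, for otherwise $d_\alpha+d_\beta\le n'+2\le n+2$; via the subsurface projection $\pi_{A_s}$ to the annulus $A_s$ with core $s$ and the bounded geodesic image theorem of Masur and Minsky, this bounds the annular distance $d_{A_s}(K_\alpha,K_\beta)$ by a universal constant (a hands-on argument gives the weaker linear bound $\le 2n$, which is also enough).

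The second case, where every vertex $c_j$ meets $s$, is where I expect the real work to lie, and it is handled entirely through the projection to $A_s$. All the $c_j$ then project to $A_s$, and since consecutive ones are disjoint in $\Sigma_g$ their annular projections may be isotoped apart, so $d_{A_s}(c_j,c_{j+1})$ is universally bounded and $d_{A_s}(a,\tau_s^m(b))=O(D)$. On the other hand $\tau_s^m$ acts on $C(A_s)\cong\Z$ by translation by $m$, and $d_{A_s}(b,\tau_s^m(b))=m+O(1)$ has no cancellation, so $d_{A_s}(a,\tau_s^m(b))\ge m-d_{A_s}(a,b)-O(1)$. Feeding in $m=k+n+2$ together with the bound on the annular distance between the disk sets then forces $D\ge k$, and combining the two cases yields the left-hand inequality of \eqref{eq: yoshizawa inequality}. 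Since $s$ is disk-busting in both handlebodies, Theorem~\ref{th: diskbusting} makes it seamed for suitable pants decompositions equivalent to $\alpha$ and to $\beta$, which renders the relevant annular projections of meridians concretely computable and may be the cleanest way to organise the estimate.

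The main obstacle is the last step of the second case. The pair of meridians $a,b$ realising the minimum need not be the pair closest in $C(\Sigma_g)$, so the bound on $d_{A_s}(K_\alpha,K_\beta)$ coming from the hypothesis does not immediately bound $d_{A_s}(a,b)$ for the relevant $a,b$; one must separately rule out meridians that twist heavily about $s$ — for which the natural tools are the quasiconvexity of the disk set of a handlebody in the curve complex, or a direct compressing-disk surgery — and one must track all constants precisely enough that the additive $n+2$ in the twist power $k+n+2$ is exactly what is needed to reach $\min(k,d_\alpha+d_\beta-2)$ rather than a fixed fraction of it. The degenerate regimes ($n=1$, or small $k$) and the verification that all curves in play, $s$ included, are essential (which follows from $d_\alpha,d_\beta\ge 2$) are routine bookkeeping to be cleared at the end.
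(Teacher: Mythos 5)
The paper does not actually prove this statement: it is imported verbatim as Theorem~5.8 of Yoshizawa's paper and used as a black box, so there is no in-paper argument to compare yours against. Judged on its own terms, your outline follows the standard curve-graph strategy for results of this type. The upper bound (triangle inequality through the vertex $s$, using that $\tau_s$ fixes $s$ and acts isometrically, so $d(s,K_{\tau_s^{m}(\beta)})=d_\beta$) is correct and complete, and the dichotomy for the lower bound --- either a geodesic from $a$ to $\tau_s^{m}(b)$ enters the $1$-neighbourhood of $s$, in which case two triangle inequalities give $D\ge d_\alpha+d_\beta-2$, or it avoids it, in which case one passes to the annular complex $C(A_s)$ --- is the right decomposition and the first branch is correctly handled.

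The second branch, however, is not closed, and you say so yourself; this is a genuine gap rather than routine bookkeeping. Two things are missing. First, the hypothesis $d_\alpha+d_\beta-2>n$ only controls a geodesic between a \emph{closest} pair of meridians, so it gives no bound on $d_{A_s}(a,b)$ for the arbitrary pair $a\in K_\alpha$, $b\in K_\beta$ realising $d(K_\alpha,K_{\tau_s^{m}(\beta)})$; bounding $\mathrm{diam}_{A_s}(\pi_{A_s}(K_\alpha))$ requires genuinely new input (quasiconvexity of disk sets, Masur--Schleimer's analysis of holes for the disk complex, or a compressing-disk surgery), none of which is carried out. Second, even granting such a bound, the inequalities you propose ($d_{A_s}(a,\tau_s^{m}(b))=O(D)$ on one side, $\ge m-d_{A_s}(a,b)-O(1)$ on the other) only yield $D\ge (m-\mathrm{const})/C$ for a multiplicative constant $C>1$ coming from the Lipschitz property of annular projections, whereas the theorem asserts the sharp additive form: twisting exactly $k+n+2$ times already forces distance $\min(k,d_\alpha+d_\beta-2)$, with no multiplicative loss. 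As written the proposal is a plausible road map, not a proof; verifying the statement requires either consulting Yoshizawa's argument directly or supplying the missing control on annular projections of disk sets together with exact constants.
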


\begin{theorem}[Theorem 6.2 of \cite{yoshizawa2014high}]\label{th: yoshi 2}
    Let $\gamma=\{c_1,\dots,c_g\}$ be full in $\Sigma_g$ and $\rho$ a pants decomposition containing $\gamma$. Suppose $s$ is seamed for $\rho$ and define the multicurve $\tau^2_s(\gamma)$ of components $d_1,\dots, d_g$. Then $d(K_{\gamma}, \tau_{d_g}^2\circ \dots \circ \tau_{d_1}^2(c_1))\geq 3$.
\end{theorem}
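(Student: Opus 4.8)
The statement is a lower bound on curve-graph distance, so I would first recast it. Write $\delta:=\tau_{d_g}^{2}\circ\cdots\circ\tau_{d_1}^{2}(c_1)$. Then $d(K_\gamma,\delta)\ge 3$ holds if and only if (a) $\delta$ is essential, (b) $\delta$ is diskbusting for $V_\gamma$ (no meridian of $V_\gamma$ is disjoint from $\delta$), and (c) no essential curve is simultaneously disjoint from $\delta$ and from a meridian of $V_\gamma$ — equivalently, every essential curve disjoint from $\delta$ is itself diskbusting for $V_\gamma$, i.e.\ $\delta$ fills $\Sigma_g$ together with every meridian of $V_\gamma$. Conditions (a) and (b) are routine (e.g.\ from the standard bound $i(\tau_c^{k}(x),y)\ge |k|\,i(c,x)\,i(c,y)-i(x,y)$ applied to the twists defining $\delta$, together with Theorem~\ref{th: diskbusting} for $s$), so the real content is (c).

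I would set up with two observations. First, by Theorem~\ref{th: diskbusting}, since $s$ is seamed for the pants decomposition $\rho\supseteq\gamma$ (which we may, and do, choose equivalent to $\gamma$ in the sense of Definition~\ref{def: disk graph}), $s$ is diskbusting for $V_\gamma$, and in each pair of pants of $\Sigma_g\setminus\rho$ the arcs of $s$ realise all three seam types. Second, since the $d_i=\tau_s^{2}(c_i)$ are pairwise disjoint, $\tau_{d_g}^{2}\circ\cdots\circ\tau_{d_1}^{2}$ is the square of the multitwist about $\tau_s^{2}(\gamma)$, and the conjugation relation $\tau_{\phi(c)}=\phi\circ\tau_c\circ\phi^{-1}$ in $\Mod(\Sigma_g)$ telescopes to
\[
\tau_{d_g}^{2}\circ\cdots\circ\tau_{d_1}^{2}=\tau_s^{2}\circ T_\gamma^{2}\circ\tau_s^{-2},
\]
where $T_\gamma$ is the multitwist about $\gamma$; since $\gamma$ is a meridian system, $T_\gamma$ extends over the handlebody $V_\gamma$, so it fixes $K_\gamma$ setwise. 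Thus $\delta$ is the image of $c_1$ under "untwist by $\tau_s^{-2}$, apply the handlebody symmetry $T_\gamma^{2}$, retwist by $\tau_s^{2}$", which localizes the analysis to one $\tau_s^{2}$-twist of a curve that is already diskbusting-type for $V_\gamma$, and lets us compare $\delta$ against $c_1$ directly (applying the inverse of the displayed map, "$\delta$ fills with $m$" becomes "$c_1$ fills with $\tau_s^{2}T_\gamma^{-2}\tau_s^{-2}(m)$").

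The core is a combinatorial "seam-propagation" lemma: because $s$ is seamed for $\rho$, applying $\tau_s^{2}$ to any curve $x$ with $i(x,s)>0$ yields a curve which, in minimal position, runs twice parallel to $s$ through a regular neighbourhood of $s$ and hence again realises all three seam types in every pair of pants of $\Sigma_g\setminus\rho$, so it is diskbusting for $V_\gamma$ by Theorem~\ref{th: diskbusting}. The plan for (c) is to push this one step further: show that \emph{every} essential curve $e$ disjoint from $\delta$ must interact with $s$ enough — intuitively because $e$ is forced to weave through the doubled parallel copies of $s$ that make up $\delta$ near its $\tau_s^{2}$-layer, while $T_\gamma^{2}$ has already spread $c_1$ across all of $\gamma$ — so that $e$ too carries all three seam types in every pair of pants of $\rho$, whence $e$ is diskbusting by Theorem~\ref{th: diskbusting}. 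Assembling this over all meridians gives (c), and with (a),(b) the bound $d(K_\gamma,\delta)\ge 3$.

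The main obstacle is precisely the boundedness of the twisting exponent: only \emph{squares} $\tau_{d_i}^{2}$ are available, so the soft large-power (Thurston--Fathi-type) arguments underlying Theorem~\ref{th: yoshi} do not apply, and one must show that "one seam of each type in each pair of pants" — the minimal output of a single $\tau_s^{2}$ on a curve meeting $s$ — already forces (c). This demands a hands-on analysis in minimal position: carefully tracking the three seam types and which boundary component of which pair of pants each relevant arc enters, controlling the intermediate handlebody multitwist $T_\gamma^{2}$ (it must genuinely distribute $c_1$ across all of $\gamma$, not just locally), and checking that the exact seam count survives all the twists involved. A secondary point, to be verified, is that $\rho$ may indeed be taken equivalent to $\gamma$, so that Theorem~\ref{th: diskbusting} applies verbatim.
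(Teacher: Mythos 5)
This statement is not proved in the paper at all: it is imported verbatim as Theorem~6.2 of Yoshizawa's paper \cite{yoshizawa2014high}, so the only thing to assess is whether your argument actually establishes it, and it does not. Your reformulation of $d(K_\gamma,\delta)\geq 3$ into (a) essential, (b) diskbusting, (c) no essential curve disjoint from both $\delta$ and a meridian, is correct, and the conjugation identity $\tau_{d_g}^{2}\circ\cdots\circ\tau_{d_1}^{2}=\tau_s^{2}\circ T_\gamma^{2}\circ\tau_s^{-2}$ is a valid and pleasant observation. But the entire content of the theorem is condition (c), and for it you offer only a plan: the assertion that any essential curve $e$ disjoint from $\delta$ ``must interact with $s$ enough'' to be seamed for $\rho$ is supported by nothing beyond the word ``intuitively,'' and you yourself concede that the decisive step ``demands a hands-on analysis in minimal position'' which you do not carry out. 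It is not at all clear that a curve disjoint from $\delta$ must meet $s$ essentially, let alone pick up all three seam types in every pair of pants; ruling out distance exactly $2$ is precisely the delicate part of Yoshizawa's argument, and it is absent here.

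Two further points weaken even the parts you call routine. First, with twisting exponent exactly $2$ the standard intersection inequalities are borderline: the single-twist bound $i(\tau_c^{k}(x),y)\geq|k|\,i(c,x)i(c,y)-i(x,y)$ does not iterate cleanly through the composition $\tau_{d_g}^{2}\circ\cdots\circ\tau_{d_1}^{2}$ (after the first twist you need intersection data with the later $d_i$'s, which changes), and the multitwist versions carry correction terms that vanish or even kill the estimate at exponent $2$; so (b) needs an actual argument, not a one-line appeal. Second, you quietly replace the hypothesis ``$\rho$ a pants decomposition containing $\gamma$'' by ``$\rho$ equivalent to $\gamma$'' so that Theorem~\ref{th: diskbusting} applies; $s$ is seamed for the \emph{given} $\rho$, and a pants decomposition containing $\gamma$ need not consist of meridians of $V_\gamma$, so this substitution changes the hypothesis and must be justified rather than flagged as ``to be verified.'' As it stands, the proposal is a strategy outline with the central step missing, not a proof of the cited theorem.
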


Note that Theorem \ref{th: yoshi} hints towards unboundedly increasing the Hempel distance of a splitting $(\Sigma_g,\alpha,\beta)$, provided a curve $s$ of distance at least 3 from $K_\alpha$ or $K_\beta$ (see the proof of Proposition \ref{prop: launching from diskbusting}). In this sense, Theorem \ref{th: yoshi 2} is complementary, as it describes how to construct such a curve $s$ from a diskbusting input.

\subsection{Quantum invariants for 3-manifolds}\label{sec: quantum}
We will not review the technical construction of TQFTs here, referring the interested reader to \cite{turaev2010quantum}. 
For our purposes, it is enough to know that, for a fixed choice of modular category $\mathcal{C}$ (again, refer to \cite{turaev2010quantum} for the definition), the TQFT associates to every closed 3-manifold $M$ a complex scalar known as its \emph{Reshetikhin-Turaev} (\emph{RT}) invariant. 
The RT invariant can be given as a function of a Heegaard diagram $(\Sigma_g,\alpha,\beta)$ of $M$ and is denoted by $\langle M\rangle^{RT}_{\mathcal{C}}$ or $\langle (\Sigma_g,\alpha,\beta)\rangle^{RT}_{\mathcal{C}}$, depending on whether we want to emphasize the manifold or the diagram. 

The algebraic structure imposed by the modular category sets some constraints on the quantum invariants. 
The next theorem, for example, is already somewhat folklore in the literature (see for example \cite{muller2023dehn}) since it implies that non-homeomorphic 3-manifolds may share RT invariants. 
It can be deduced from Theorem 5.1 of \cite{etingof2002vafa} and \cite{salvatore2001frameddiscsoperadsequivariant, turaev2010quantum}.

\begin{theorem}[Vafa's theorem for 3-manifold TQFTs]\label{th: vafa}
    Let $\mathcal{C}$ be a modular category. Then there is an $N\in \Z^+$, depending only on $\mathcal{C}$, such that, for all $k\in \Z$ and every curve $s$ in $\Sigma_g$, $\langle(\Sigma_g, \alpha, \beta)\rangle_\mathcal{C}^{RT} = \langle(\Sigma_g, \alpha, \tau^{kN}_s(\beta))\rangle^{RT}_\mathcal{C}$.  
\end{theorem}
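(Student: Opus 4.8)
The plan is to translate the statement into the language of the Reshetikhin--Turaev TQFT $Z_\calC$ attached to $\calC$ (as constructed in \cite{turaev2010quantum}) and then invoke Etingof's form of Vafa's theorem, which says that the ribbon twists $\theta_i$ of a modular category are roots of unity. Recall that $Z_\calC$ assigns to $\Sigma_g$ a finite-dimensional space $Z_\calC(\Sigma_g)$, to each handlebody $V$ with $\partial V=\Sigma_g$ a vector $Z_\calC(V)\in Z_\calC(\Sigma_g)$, and that the invariant of a closed $3$-manifold presented by a Heegaard splitting is a fixed nondegenerate pairing of the two handlebody vectors, $\langle(\Sigma_g,\alpha,\beta)\rangle^{RT}_\calC=\langle Z_\calC(V_\alpha),Z_\calC(V_\beta)\rangle$. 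Passing from $\beta$ to $\tau^{m}_s(\beta)$ amounts to reparametrizing the boundary of the handlebody $V_\beta$ by $\tau_s^m$, and the mapping class group acts on $Z_\calC(\Sigma_g)$ so that $Z_\calC(V_{\tau_s^m\beta})=\rho(\tau_s)^{m}Z_\calC(V_\beta)$, where $\rho(\tau_s)$ is the (framing-corrected) operator realizing the Dehn twist. Hence it suffices to show that $\rho(\tau_s)$ has finite order, bounded by some $N$ depending only on $\calC$ --- not on $g$ and not on $s$.

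To pin down this order I would choose a pants decomposition of $\Sigma_g$ containing $s$ and work in the associated \emph{Verlinde basis} of $Z_\calC(\Sigma_g)$, whose vectors are indexed by admissible colorings of the pants curves by simple objects of $\calC$. A Dehn twist about $s$ is supported in an annular neighborhood of $s$, where the defining cobordism inserts a ribbon twist; identifying this annular generator with the balancing of $\calC$ is exactly what the operadic relationship between the framed little discs operad and the ribbon element from \cite{salvatore2001frameddiscsoperadsequivariant}, together with the explicit model of \cite{turaev2010quantum}, yields. Consequently $\rho(\tau_s)$ is diagonal in this basis, scaling the vector of a coloring $\lambda$ by $\theta_{\lambda(s)}^{\pm 1}$ up to a single global normalization scalar of $\calC$, so every eigenvalue of every such twist operator, over all genera and all curves, lies in the finite set $\{\theta_i : i\ \text{simple}\}$ up to that scalar. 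Now \cite[Theorem 5.1]{etingof2002vafa} says each $\theta_i$ is a root of unity, and the normalization scalar (controlled by the central charge of $\calC$) is one as well; letting $N$ be the least common multiple of their orders --- a quantity depending only on $\calC$ --- gives $\rho(\tau_s)^N=\mathrm{id}$ on $Z_\calC(\Sigma_g)$ for every $g$ and every $s$, hence $\rho(\tau_s)^{kN}=\mathrm{id}$ for all $k\in\Z$. Combined with the first paragraph this yields $\langle(\Sigma_g,\alpha,\tau^{kN}_s\beta)\rangle^{RT}_\calC=\langle Z_\calC(V_\alpha),\rho(\tau_s)^{kN}Z_\calC(V_\beta)\rangle=\langle Z_\calC(V_\alpha),Z_\calC(V_\beta)\rangle=\langle(\Sigma_g,\alpha,\beta)\rangle^{RT}_\calC$, which is the claim.

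The step I expect to demand the most care is the bookkeeping of the framing (signature) anomaly. The mapping class group acts on $Z_\calC(\Sigma_g)$ only projectively, so I must check that the Reshetikhin--Turaev normalization absorbs the defining $2$-cocycle exactly for powers of a single Dehn twist --- so that reparametrizing by $\tau_s^m$ genuinely multiplies the invariant by the honest scalar produced by $\rho(\tau_s)^m$ --- and that the operator so obtained is eventually the identity rather than merely eventually a scalar. The point will be that the relevant cobordism is $\Sigma_g\times[0,1]$ with a twist inserted near $s$, which carries a canonical $2$-framing and contributes no central-charge factor, so the only scalar occurring is the one already recorded in the Verlinde-basis computation; verifying this is a matter of following the conventions of \cite{turaev2010quantum}.
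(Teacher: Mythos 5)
Your proposal is correct and follows exactly the route the paper intends: the paper offers no written-out proof, merely noting that the statement "can be deduced from Theorem 5.1 of \cite{etingof2002vafa} and \cite{salvatore2001frameddiscsoperadsequivariant, turaev2010quantum}," and your argument---diagonalizing the Dehn twist operator in a Verlinde basis so its eigenvalues are ribbon twists $\theta_i$, invoking Etingof's form of Vafa's theorem to see these (and the central-charge anomaly scalar) are roots of unity, and taking $N$ to be the least common multiple of their orders---is precisely that deduction, with the framing-anomaly bookkeeping correctly flagged as the only delicate point.
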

For a fixed modular category, we call the integer $N$ the category's \textit{Vafa's constant}.

\subsection{Relevant data structure}\label{sec: data structure}
An embedded graph $G=(V,E)$ in a surface $\Sigma$ defines a \textit{cellular complex} for $\Sigma$ if $\Sigma- G$ is a union of open disks, which we call \textit{faces}.
Note that this implies that any boundary component of $\Sigma$ fully lies within some set of edges in $E$. 
The \textit{dual graph} of $G$ is another graph embedded in $\Sigma$ defined by assigning a vertex to each face of $\Sigma- G$ and an edge between the vertices if and only if the corresponding faces are separated by an edge in $E$.
As a data structure, we represent a cellular embedding by the lists of faces, their incident edges, and vertices, allowing us to reconstruct $\Sigma$ by gluing the appropriate pieces.
Using this structure, one can compute the dual graph of the cellular embedding in linear time on the number of faces. 

A \textit{(generalized) triangulation} $T=(V,E)$ \textit{of a surface} $\Sigma$ is a cellular complex where all faces are bounded by exactly 3 edges.
We denote the number of triangles in a triangulation by $|T|$; note that $|T|=O(|E|)$. 
We will most often consider \textit{oriented triangulations} by giving an orientation to each triangle consistent with the orientation of the surface; we orient a triangle by imposing an order to its vertices as in the right-hand rule. 
We say that a triangulation $T'$ of a surface $\Sigma$ is a \textit{subtriangulation} of another triangulation $T$ of $\Sigma$, denoting $T \leq T'$, if the graph of $T$ is embedded in $T'$ (i.e., each vertex of $T$ is a vertex of $T'$ and each edge of $T$ is a union of edges in $T'$). 
If $T$ and $T'$ are oriented, we also require the order of the vertices to be the same. 
Every face of $T'$ is naturally contained in a face of $T$.

Using a triangulation $T=(V,E)$ of a surface $\Sigma$, we can describe an arc, curve, or multicurve $s$ in $\Sigma$ lying fully within the edges $E$ by the list of the edges in $E\cap s$ (the red curve in Figure \ref{fig: overlay}). 
We will call this list an \textit{edge list representation} of the curve $s$, denote it by $E_T(s)$, and say that the number of edges in $E\cap s$ is the \textit{edge complexity}, $\|E_T(s)\|$.
We can always assume that $\|E_T(s)\|\leq |T|$. 

\begin{figure}
\centering
\includegraphics[width=0.8\textwidth]{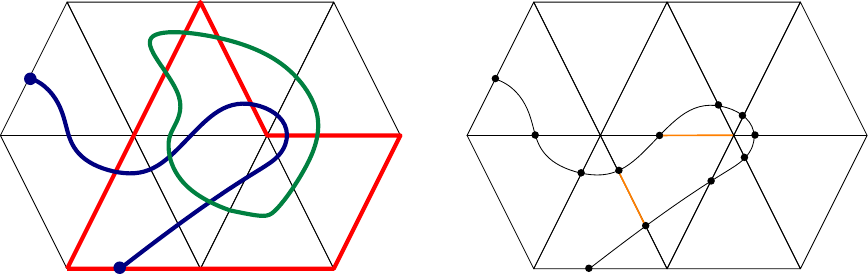}
\caption{Left: a triangulation of the disk with a curve represented by edge list (red), a standard curve (green), and a normal arc (blue). Right: the overlay graph of the normal arc, with some ports (orange) highlighted.}\label{fig: overlay}
\end{figure}

For any fixed triangulation $T$ of the surface $\Sigma$, there are always, however, isotopy classes of curves in $\Sigma$ not representable as a subset of edges of $T$. 
To deal with this hindrance, we say, for a fixed triangulation $T=(V,E)$ of the surface $\Sigma$, that a curve $s$ is \textit{standard} (with respect to $T$) if it intersects $T$ only transversely and at edges (green curve in Figure \ref{fig: overlay}).
If $s$ is standard, we may represent it as an \textit{intersection word} $I_T(s)$, taking $E$ as an alphabet and traversing $s$ along some arbitrary direction and, whenever we meet an edge $e\in E$, we append $e$ to $I_T(s)$ if $e$ is crossed according to the orientation of $\Sigma$ and $e^{-1}$ otherwise. 
While the edge representation is well-defined at the curve level, intersection words are defined only up to isotopies inside the triangulation's faces.
Furthermore, isotopy classes of $I_T(s)$ are closed under cyclic permutations and taking inverses.
The \textit{complexity of an intersection word} $I_T(s)$, $\|I_T(s)\|$, is its length (i.e., the number of edges of $T$ intersected by $s$, counted with multiplicity). 
If $\gamma$ is a multicurve, we let $I_T(\gamma)$ be the set of $\#\gamma$ intersection words representing each component. 
Similarly, the complexity of a Heegaard diagram $(\Sigma_g,\alpha,\beta)$, where $\alpha$ and $\beta$ are standard multicurves, equals $\|I_T(\alpha)\|+\|I_T(\beta)\|$. Standard arcs are treated accordingly.

 A standard curve, multicurve or arc $s$ is \textit{normal} if no intersection word $I_T(s)$ contains a substring of form $ee^{-1}$ or $e^{-1}e$ where $e\in E$ (the blue curve in Figure \ref{fig: overlay}). 
 That is, when normal, the intersections of $s$ and the faces of the triangulation are arcs connecting distinct edges of each triangle.
 The union of these arcs with the original triangulation defines a cellular embedding $T_s$ which we call the \emph{overlay graph} of $s$ (right side of Figure \ref{fig: overlay}).
 By extending the definition of subtriangulation to general cellular embeddings, we see that $T\leq T_s$. 
 Therefore, the edges of the overlay graph are either the intersections of $s$ and the faces of $T$, or (potentially empty) subdivisions of the edges of $T$, called \emph{ports}. In particular, for any curve (or multicurve) $s$, we note that the number of faces of $T_s$ is $|T|+\|I_T(s)\|-1$. 

 \begin{remark}\label{rm: normal complexity}
    When a curve is normal for a triangulation, its isotopy class is fixed by the number of times it intersects each labeled edge of $E$ \cite{schaefer2002algorithms}, meaning that it can be described through a vector in $\mathbb{N}^{|E|}$ called the curve's \emph{normal coordinates}. It is, therefore, natural to use the normal coordinates as a data structure for curves in surfaces \cite{bell2016polynomial, bell2015recognising,schaefer2002algorithms, schaefer2008computing}, with their complexity given by the amount of space, in binary, necessary to represent the vector. Although this gives an exponential gain in storage space, efficient algorithms on this compressed encoding are unnecessary to guarantee the complexity results of Section \ref{sec: computational consequences}.
\end{remark}

\begin{remark}
    Normal curves can be understood as the smaller dimension version of the normal surface theory, widely used by Haken and follow-up works to analyze, for example, the problems of unknot detection and orientability of embedded surfaces in 3-manifolds \cite{haken1961theorie, matveev03-algms}.
\end{remark}

\begin{lemma}\label{lm: compute overlay}[Section 3.1 of \cite{erickson2012tracing}]
    Suppose that $s$ is a normal arc, curve, or multicurve with respect to some triangulation $T$, with $\|I_T(s)\|=m$. Then, one can compute, in time $O(m+|T|)$, the overlay graph $T_s\geq T$.
\end{lemma}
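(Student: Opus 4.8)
The plan is to re-derive, in the terminology of this section, the tracing algorithm of \cite[Section~3.1]{erickson2012tracing}. Two points need to be addressed: that the output $T_s$ has size $O(m+|T|)$, so that a linear-time construction is at least conceivable; and that such a construction exists.

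For the size bound, I would count directly. The vertices of $T_s$ are the vertices of $T$ together with the $m$ points in which $s$ crosses edges of $T$; its edges are the ports (at most $|E|+m$ sub-arcs of edges of $T$) together with the at most $m$ arcs into which the faces of $T$ cut $s$ (each crossing point is an endpoint of exactly two such arcs); and, by Euler's formula, it then has $O(m+|T|)$ faces. Thus $T_s$ is a cellular complex of size $O(m+|T|)$, and it can be written down in that much space.

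For the algorithm, the first step is a single pass over the intersection word(s) $I_T(s)$, recording the crossing multiplicity $\mu(e)$ of every edge $e\in E$; this costs $O(m+|T|)$ and $\sum_e\mu(e)=m$. Because $s$ is normal, the components of $s\cap\Delta$ in each triangle $\Delta$ are disjoint arcs joining distinct edges, so they fall into three parallel bundles, one per corner of $\Delta$; writing $\mu_0,\mu_1,\mu_2$ for the multiplicities of the three edges of $\Delta$, the bundle sizes are $a_i=\tfrac12(\mu_j+\mu_k-\mu_i)$, which are nonnegative integers exactly because $s$ is normal. Knowing these three numbers in every triangle fixes, for each edge $e$ and from the side of each of its (at most two) incident triangles, which of the two bundles abutting $e$ each of the $\mu(e)$ crossing points on $e$ belongs to, hence their linear order along $e$; since that order is merely ``by distance to a chosen endpoint of $e$'', the two sides agree, and in fact $T_s$ depends only on $(\mu(e))_{e\in E}$, consistently with Remark~\ref{rm: normal complexity}. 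It then remains to materialize, edge by edge, the ports and the $\mu(e)$ new vertices, and triangle by triangle, the $a_0+a_1+a_2$ arcs (linking each to the correct crossing point at either end) and the $a_0+a_1+a_2+1$ faces into which its arcs cut it (up to three corner regions, stacks of parallel quadrilaterals, a central region); each operation is local and linear in the quantities involved, giving $O(m+|T|)$ in total. Properly embedded arcs are handled the same way, now allowing endpoints and some crossing points to lie on boundary edges.

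The step I expect to be the real obstacle is threading the crossing points of $s$ into a globally consistent combinatorial order around the triangles and along the edges while staying within the $O(m+|T|)$ budget --- in particular without sorting. This is precisely what the bundled structure above provides, and the careful bookkeeping is spelled out in \cite[Section~3.1]{erickson2012tracing}; since their overlay graph differs from $T_s$ only by a harmless convention, both correctness and the running time carry over.
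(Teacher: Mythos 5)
Your argument is correct, and it is essentially the approach of the cited source: the paper itself gives no proof of this lemma, deferring entirely to \cite[Section 3.1]{erickson2012tracing}, whose algorithm likewise extracts the normal coordinates $\mu(e)$ from the input, computes the per-triangle corner (``junction'') coordinates $a_i=\tfrac12(\mu_j+\mu_k-\mu_i)$, and uses the resulting bundle structure to lay down the overlay in time $O(m+|T|)$. The only cosmetic difference is that you assemble $T_s$ locally, triangle by triangle, from the corner coordinates, whereas the reference (and the authors' own unpublished sketch) traces $s$ sequentially arc by arc, resolving left/right turns from the same coordinates; both rest on the same key fact that normality forces the arcs in each face into three parallel corner bundles whose sizes, and hence the combinatorics of $T_s$, are determined by $(\mu(e))_{e\in E}$. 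Your size count and the remark that generalized triangles (with identified sides) and properly embedded arcs need the per-side, rather than per-edge, bookkeeping are the right caveats.
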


\section{Algorithms for curves in surfaces and handlebodies}\label{sec: algorithms}

\subsection{Converting between representations of curves}
In Section \ref{sec: data structure}, we saw two representations of curves in a surface: edge lists
and intersection words. 
While some topological operations such as cutting along a curve are easier to implement using edge list representations (one needs only to delete the edges crossing $E_T(s)$ from the dual graph of $T$ to cut along $s$), others, such as doing Dehn twists (Theorem \ref{th: normal coordinates}) are more suited to curves intersecting the triangulation transversely.
Therefore, as a first step in proving Theorem \ref{th: properties}, it will be convenient to have polynomial time procedures to convert from one representation to the other. 

First, we describe an algorithm to transform curves represented by edge lists into intersection words. 
For an edge list $E_T(s)$ not in a boundary of an oriented surface $\Sigma$, there are two choices of normal curves isotopic to $s$ created by slightly displacing it either to the left or to the right of the edges (with respect to the orientation of $\Sigma$ and some arbitrary orientation of $s$), we call them \textit{twins born from} $s$. 
Given a choice of twin for $s$, say left, one can compute its intersection word by traversing $s$ and appending the letters representing adjacent edges coming from the left side of the triangulation graph when embedded in $\Sigma$. 
If, however, $s$ lies either partially or fully on the boundary of $\Sigma$, we can only consistently displace it to one side, which can be determined in time $O(|T|)$. 
This algorithm does not change the triangulation. 
For convenience, we state this argument as the following proposition.

\begin{proposition}\label{prop: triangulation to normal}
    Suppose $E_T(s)$ is an edge list representation of a curve $s$ in $\Sigma$. Then there is an algorithm to compute an intersection word $I_T(s)$ of complexity $O(|T|)$ in time $O(|T|)$.
\end{proposition}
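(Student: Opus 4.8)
The plan is to make precise the informal description already given in the paragraph preceding the statement. We are given a curve $s$ recorded as an edge list $E_T(s)$, i.e. as the subset of edges of $T$ through which $s$ runs, together with the combinatorial information telling us, at each vertex of $T$ that $s$ passes through, which two incident edges of $E_T(s)$ are consecutive along $s$ (this incidence data is part of the data structure described in Section~\ref{sec: data structure}). The output should be a normal curve isotopic to $s$, presented as an intersection word $I_T(s)$. The idea is to push $s$ off the $1$-skeleton to a parallel normal curve — one of the two \emph{twins born from} $s$ — and read off which edges of $T$ this pushed-off curve crosses.

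First I would set up the push-off. Fix an orientation of $\Sigma$ and an (arbitrary) orientation of $s$. Traverse $s$: it is a cyclic sequence of edges $e_1, e_2, \dots, e_\ell$ of $T$ (with $\ell = \|E_T(s)\|$), where $e_j$ and $e_{j+1}$ share a vertex $v_j$ of $T$ and are consecutive in the cyclic edge-ordering around $v_j$ (indices mod $\ell$). The twin obtained by displacing $s$ slightly to the left runs, in each corner of $T$ between $e_j$ and $e_{j+1}$ at $v_j$, parallel to $s$ on the left side; as it goes from near $e_j$ to near $e_{j+1}$ it must cross exactly those edges of $T$ incident to $v_j$ that lie strictly between $e_j$ and $e_{j+1}$ in the cyclic order around $v_j$ on the left side (possibly zero of them). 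So for each $j$ I emit the list of those intermediate edges, each signed $+1$ or $-1$ according to whether the push-off crosses it in agreement with the orientation of $\Sigma$ or against it; concatenating over all $j$ in cyclic order yields $I_T(s)$. The boundary case is handled by noting that if $s$ lies partially or entirely on $\partial\Sigma$ then only one of the two sides is available (the interior side), which side being determined in time $O(\|E_T(s)\|)$; otherwise either side works.

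Second I would verify correctness: the emitted curve is isotopic to $s$ (it is visibly a small push-off, hence isotopic), it is standard since by construction it meets $T$ only transversely at edges, and it is normal since within each triangle it runs parallel to a side of $s$ and so enters and leaves through two distinct edges — no $ee^{-1}$ backtrack can appear, because a backtrack would require $s$ itself to meet a triangle in an arc with both endpoints on the same edge, which an edge-list curve does not do. Finally I would bound the complexity. The total number of letters emitted is $\sum_{j=1}^{\ell} (\text{number of edges of } T \text{ between } e_j \text{ and } e_{j+1} \text{ at } v_j)$. Each such edge $f$ incident to $v_j$ is counted at most a constant number of times per visit of $s$ to $v_j$, and the number of times $s$ visits $v_j$ at all is at most the degree of $v_j$; a cleaner bound: each contribution is at most $\deg_T(v_j)$, and $\sum_j \deg_T(v_j) = O(|T|)$ over all vertices — but we want the bound purely in terms of $\|E_T(s)\|=m$, not $|T|$. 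The right accounting is that the intermediate edges at $v_j$ number at most (the number of edges at $v_j$ that $s$ uses) which, summed, telescopes to $O(m)$: indeed an edge $e_i$ of $E_T(s)$ can appear as an ``intermediate'' edge for at most the two corners of $s$ adjacent to it, so each of the $m$ edges of the edge list is emitted $O(1)$ times, giving $\|I_T(s)\| = O(m)$. The traversal itself visits each edge of $E_T(s)$ once and does $O(1)$ work plus the emission, so the running time is $O(m + \|I_T(s)\|) = O(m)$, and the triangulation is untouched.

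The only genuinely delicate point — the ``main obstacle'' — is the complexity bookkeeping just described: one must argue that pushing off across a corner at $v_j$ does not cross \emph{all} edges incident to $v_j$ (which could be many), but only those between the two consecutive edges of $E_T(s)$ at that corner, and then that these intermediate edges, summed over all corners of $s$, total $O(m)$ rather than $O(m + |T|)$. This follows from the observation that each edge of the edge list $E_T(s)$ is ``between'' only the $O(1)$ corners immediately flanking it, but it is worth stating carefully since the naive bound would involve vertex degrees and hence $|T|$. Everything else is a direct unwinding of the definitions of twin, standard, and normal curves from Section~\ref{sec: data structure}.
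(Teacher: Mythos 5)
Your construction is the same as the paper's: the statement is justified there only by the informal paragraph that precedes it (push $s$ off to a ``twin'' and record the edges the twin crosses), and your first two steps — the description of the push-off and the verification that the result is an isotopic, normal curve — are a faithful and correct expansion of that sketch.

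The complexity bookkeeping, however, which you rightly single out as the delicate point, contains a genuine error. The edges crossed by the twin at the corner of $v_j$ between $e_j$ and $e_{j+1}$ are \emph{not} edges of $E_T(s)$: since $s$ is embedded it passes through $v_j$ exactly once, so the only edges of $E_T(s)$ incident to $v_j$ are $e_j$ and $e_{j+1}$ themselves, and every ``intermediate'' edge in the left sector is an edge of $T$ outside the edge list. Hence the claim that each emitted letter can be charged to one of the $m$ edges of $E_T(s)$, and the resulting telescoping to $O(m)$, is false. The number of letters emitted at the corner $v_j$ is genuinely up to $\deg_T(v_j)-2$, and this can vastly exceed $m$: in a one-vertex triangulation of $\Sigma_g$ a simple closed curve in the $1$-skeleton is a single loop edge ($m=1$), while its push-off crosses $\Theta(g)=\Theta(|T|)$ edges of $T$. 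What your construction actually yields is $\|I_T(s)\|=O\bigl(\|E_T(s)\|+\textstyle\sum_{v\in s}\deg_T(v)\bigr)=O(\|E_T(s)\|+|T|)$, with the same running time (each edge of $T$ is emitted at most twice, once per endpoint lying on $s$). That weaker bound is all this construction can give without a bounded-degree hypothesis on $T$; it is also all that is needed downstream, since every subsequent complexity estimate in the paper already carries a factor of $|T|$. So either state and prove the $O(\|E_T(s)\|+|T|)$ bound, or add the hypothesis under which the $O(\|E_T(s)\|)$ claim actually holds — but the telescoping argument as written does not establish it.
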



Now we turn to the other direction, that is, going from an intersection to an edge representation. 
Although by Lemma \ref{lm: compute overlay} one can, in time $O(|T|+\|I_T(\gamma)\|)$, trace $T_\gamma$, the overlay graph is not a triangulation of the surface. 
Luckily, we may always extend it to a triangulation $T'\geq T_\gamma \geq T$ as shown in the proof of the next proposition.

\begin{proposition}\label{prop: normal to triangulation}
    Suppose that $s$ is a normal arc, curve, or multicurve with respect to some triangulation $T$, with $\|I_T(s)\|=m$. 
    Then, there exists an algorithm that constructs, in time $O(m+|T|)$, a new triangulation $T' \geq T_s$ of size $O(m+|T|)$.
\end{proposition}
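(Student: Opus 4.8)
The plan is to compute the overlay graph $T_s$ via Lemma~\ref{lm: compute overlay} and then refine it to a triangulation by subdividing each of its faces, exploiting the fact that every face of $T_s$ has only a bounded number of sides. First I would invoke Lemma~\ref{lm: compute overlay} to obtain, in time $O(m+|T|)$, the cellular embedding $T_s\geq T$, stored in the data structure of Section~\ref{sec: data structure} (the list of faces together with their incident edges and vertices in cyclic order). Since $T\leq T_s$, each face of $T_s$ is contained in a single triangle $\Delta$ of $T$, and its boundary alternates between arcs of $s$ and ports lying on the three edges of $\Delta$.

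The key structural observation is that every face of $T_s$ is a simple $k$-gon with $3\leq k\leq 6$. Indeed, since $s$ is normal, its trace inside any triangle $\Delta$ of $T$ is a disjoint union of proper chords, each of one of the three isotopy types of normal arc in a triangle, the chords of a given type forming a parallel (nested) family; the same holds near an endpoint of $s$ lying on $\partial\Sigma$, because a terminal chord still has both of its endpoints on $\partial\Delta$. Cutting $\Delta$ along these families produces three corner regions (triangles), ``band'' regions between consecutive parallel chords (quadrilaterals), and at most one central region bounded by the three outermost chords and three ``middle'' ports, hence a hexagon; a short Euler-characteristic computation confirms that these are all the faces and that $T_s$ has no interior vertices inside $\Delta$. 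To build $T'$ I would then fan-triangulate (equivalently, ear-clip) each face by inserting $k-3\leq 3$ diagonals between vertices already present on its boundary, updating the face list accordingly; no new vertices are introduced and no edge of $T_s$ is subdivided, and in the oriented case each resulting triangle is oriented consistently with $\Sigma$.

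For correctness, observe that since we only add diagonals among existing vertices without subdividing edges of $T_s$, the $1$-skeleton of $T_s$ embeds in $T'$, so $T_s\leq T'$ and hence $T\leq T_s\leq T'$; moreover every face of $T'$ is a triangle, so $T'$ is a (generalized, and oriented if $T$ was) triangulation of $\Sigma$. For the running time and size: $T_s$ has $|T|+m-1=O(m+|T|)$ faces, each processed in $O(1)$ time and producing $O(1)$ new triangles, so the whole procedure runs in $O(m+|T|)$ time and $|T'|=O(m+|T|)$. The main obstacle is establishing the bounded-face-size claim cleanly and uniformly for all three types of input (arc, curve, multicurve); once that local picture of normal arcs in a triangle is pinned down, the remainder is routine manipulation of the cellular-embedding data structure.
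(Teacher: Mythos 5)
Your proposal is correct and follows essentially the same route as the paper: compute the overlay graph $T_s$ via Lemma~\ref{lm: compute overlay}, observe that its faces are polygons with at most six sides, and triangulate each face by adding diagonals between existing vertices, yielding the stated $O(m+|T|)$ bounds. Your justification of the bounded-face-size claim (corner triangles, parallel bands, and at most one central hexagon per triangle of $T$) is in fact more detailed than the paper's, which simply asserts that the non-triangular faces are rectangles, pentagons, or hexagons.
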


\begin{proof}
   Assume $s$ is a normal curve, the multicurve case is treated similarly. 
   Apply Lemma \ref{lm: compute overlay} to compute $T_s$ and note that, although it has $|T|+m-1$ faces, at most $m-1$ of them are not triangles (they can be rectangles, pentagons, or hexagons). 
   Suppose one of them is a hexagon, we can transform it into a union of triangles by choosing a vertex and adding 3 edges connecting it to its non-adjacent vertices, increasing the number of faces in $T_s$ by 3. 
   The rectangles and pentagons can be treated similarly.
   In total, one needs to add to $T_s$, at most, $3m-3$ new edges to construct a triangulation $T'\geq T_s$.
\end{proof}

\subsection{Basic operations on curves, handlebodies, and Heegaard splittings}
Before proceeding with the main results, we will establish algorithms for some basic operations, namely applying a Dehn twist, extending minimal to maximum systems, constructing a diskbusting curve for a minimal system, and stabilizing a Heegaard splitting. 
The first of these results is only a minor modification of \cite{schaefer2008computing}, whereas the proof of the second in the context of curves in surfaces is mostly due to \cite[Theorem 7.1]{verdiere2007optimal}.

\begin{theorem}\label{th: normal coordinates}
Suppose $t$ and $s$ are normal (multi)curves for some fixed triangulation $T$ of $\Sigma_g$, given through intersection words $I_T(s)$ and $I_T(t)$ with $m=\max\{\|I_T(s)\|,\|I_T(t)\|\}$. 
Then an intersection word for $\tau^k_{s}(t)$ for all $k\in \Z$, with $\|I_T(\tau^k_{s}(t))\|\leq |k|m^3$, can be computed in time $O(|k|m^3)$.
\end{theorem}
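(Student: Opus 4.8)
The plan is to work entirely with the intersection word (equivalently the normal coordinate) encoding and realize the Dehn twist $\tau_s$ as an explicit surgery on the overlay graph $T_s$. First I would handle the case $|k|=1$ (say $k=1$, the case $k=-1$ being symmetric with the opposite twist direction), and then reduce general $k$ to it by iteration, taking care that the bound does not blow up worse than claimed. For $k=1$: invoke \reflem{compute overlay} to compute the overlay graph $T_s$ in time $O(m+|T|)$; this refines $T$ by adding the $m$ arcs of $s$ through the triangles, and it has $O(m+|T|)=O(m)$ faces (absorbing $|T|$ into $m$ since $m\ge |T|$ would be the interesting regime — more carefully, one keeps both parameters but the statement only tracks $m$, so I would note that we may assume $|T|=O(m)$, or simply state the bound as depending on $m$ as the paper does). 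Then $\tau_s$ is supported in an annular neighborhood $A$ of $s$, which in the overlay graph is precisely the union of the faces of $T_s$ incident to the arcs of $s$; cutting along $s$ and regluing with a full twist amounts to, for each point of $t\cap s$, inserting a copy of a ``spiralling'' arc that wraps once around $A$.

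Concretely, here is the combinatorial heart. The curve $t$ is given by $I_T(t)$; first put $t$ in the refined triangulation $T_s$, i.e. compute $I_{T_s}(t)$: each letter of $I_T(t)$ crossing an edge $e$ of $T$ that $s$ meets $|s\cap e|$ times becomes a bounded number of letters in $T_s$ (at most $|s\cap e|+1 \le m+1$ of them), so $\|I_{T_s}(t)\| = O(m^2)$. The intersections $t\cap s$ are read directly off $I_{T_s}(t)$ as the letters crossing the $s$-arcs. Now build the twisted curve: traverse $I_{T_s}(t)$, and each time $t$ crosses $s$ transversally, splice in a path that runs parallel to $s$ all the way around the annulus $A$ and comes back — this parallel copy crosses each of the $m$ arcs of $s$ once (plus $O(1)$ crossings of $T$-edges per arc), so each splice contributes $O(m)$ letters. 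Since $t$ crosses $s$ exactly $i(t,s)\le \|I_{T_s}(t)\| = O(m^2)$ times before we isotope, but in fact it crosses $s$ at most $O(m)$ times if $s,t$ are in normal position for $T$ (each crossing uses a distinct incidence which is bounded)... I would be careful here: the cleanest bound is $|t\cap s| \le \|I_T(t)\|\cdot(\text{something})$; the paper claims the output has complexity $\le |k|m^3$, which is consistent with $|t\cap s|=O(m^2)$ splices each of cost $O(m)$, giving $O(m^3)$ for $k=1$. So I would not try to optimize: bound $|t\cap s|=O(m^2)$ crudely, each spliced loop costs $O(m)$, total $O(m^3)$; everything is computable in time linear in the output size plus $O(m+|T|)$ for the overlay. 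After the splicing one obtains an intersection word for $\tau_s(t)$ with respect to $T_s$; finally translate back to $T$ (each $T_s$-edge lies in a $T$-edge or a $T$-face, and this bookkeeping is $O(1)$ per letter) to get $I_T(\tau_s(t))$ of complexity $O(m^3)$.

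For general $k$, I would not iterate the $k=1$ step $|k|$ times naively, because the complexity would multiply each time. Instead, the spiralling-arc trick handles $\tau_s^k$ in one shot: for each point of $t\cap s$ splice in a loop that wraps $|k|$ times around the annulus $A$ (with sign according to $\mathrm{sgn}(k)$), which crosses each of the $m$ arcs of $s$ exactly $|k|$ times. Then each splice costs $O(|k|m)$, there are $O(m^2)$ splices, total $O(|k|m^3)$, matching the claim; the running time is again linear in this output size. This is really the same proof as \cite{schaefer2008computing}; the only thing to add relative to that reference is the explicit complexity accounting in terms of $m$ and the observation that working in $T_s$ (rather than re-triangulating) keeps the bookkeeping linear.

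The main obstacle, and the only place real care is needed, is the bookkeeping of the spliced loop: one must verify that after inserting the $|k|$-fold parallel push-off of $s$ at each crossing, the resulting curve is still normal (or at least standard) for a triangulation of bounded size, that it is genuinely isotopic to $\tau_s^k(t)$ (i.e. the splice is inserted on the correct side and with the correct orientation relative to the surface orientation, so that it is a left twist as in \reffig{dehn twist}), and that the crossing count $|t\cap s|=O(m^2)$ used is honest — here I would simply note $|t\cap s|$ is bounded by the number of letters of $I_{T_s}(t)$ that cross $s$-arcs, which is at most $\|I_{T_s}(t)\|=O(m^2)$, no minimal-position argument being needed since the output complexity bound $|k|m^3$ is stated as an upper bound, not the geometric intersection number. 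Everything else is a routine traversal of a word of length $O(|k|m^3)$, so the time bound follows. I would relegate the detailed figures and the verification that the splice implements a left Dehn twist to the appendix, as the paper already indicates.
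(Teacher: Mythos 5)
Your proposal is correct and follows essentially the same route as the paper's proof: at each transversal crossing of $t$ with $s$ one splices in $k$ parallel copies of the intersection word of $s$, with the crossing count bounded by $O(m^2)$ and each splice of length $O(|k|m)$, giving the $O(|k|m^3)$ bound, followed by a reduction to restore normality. The only cosmetic difference is that you pass explicitly through the overlay graph $T_s$ to locate the crossings, whereas the paper localizes them directly via the left-turn/right-turn convention on $I_T(t)$ (Figure \ref{fig: dehn}); the combinatorial content and the counting are identical.
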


\begin{proof}
   Before we start, some notation. If $t$ is a standard curve in $T=(E,V)$ and $p$ is a labeled point of $t\cap E$, we denote by $I_T(t,p)$ the intersection word of $t$ starting at $p$, with some fixed choice of traversing direction not explicit in the notation. 
   Given any intersection word $I_T(t)$ and a fixed $p_i\in t\cap E$, one can compute $I_T(t,p)$ in time $O(\|I_T(t)\|)$ by cyclic permuting and reversing the input word until $p$ is the first letter and the orientation agrees with the expected orientation of $I_T(t,p)$.
   
   Now, for the actual proof, we will isotope the curves $t$ and $s$ inside each face of the triangulation so they intersect in a particular way. 
   If two sides of the triangle are identified, there is, up to an isotopy inside the triangle's face, only one normal arc, meaning that $t$ and $s$ can be made disjoint. 
   If no two sides of the triangle are identified, we convention $t$ to intersect the leftmost port of $T_s$ if it turns left (with respect to the triangle's orientation) after the intersection, otherwise, it intersects the rightmost port (Figure \ref{fig: dehn}). 
   In this way, $t$ may only intersect $s$ if it does a left turn followed by a right one or vice versa. 
   Consider some region where the two curves intersect, e.g., $t$ does a right turn to intersect an edge $e$ and then a left turn to intersect an edge $f$ (see Figure \ref{fig: dehn}). 
   Order the intersection points of $e$ and $s$ from right to left following the orientation induced by the tracing direction of $t$ and the surface's orientation. 
   If the intersection happens because $t$ does a right turn followed by a left turn, order the intersections of $e$ from left to right. 
   We compute, in time $O(m^2)$, the intersection word $I_T(s, p_i)$ launching from each of the points $p_i, i=1,\dots,|s\cap e|,$ and following the original direction of tracing. 
   We then append the word $I_T(s,p_1)^kI_T(s,p_2)^k\dots I_T(s,{|s\cap e|})^k$ in between the letters $e$ and $f$ on $I_T(t)$. 
   In total, we must append these strings, of length $k|s\cap e|m\leq km^2$, at most $m$ times.
   The output word, $I_T(\tau_s^k(t))$, although representing a standard curve, might not represent a normal one, having some occurrences of $ee^{-1}$ or $e^{-1}e$. 
   To ensure normality, we cyclically reduce the word in linear time on $\|I_T(\tau^k_s(t))\|$. 
   For, it takes $O(\|I_T(\tau^k_s(t))\|)$ time to (non-cyclic) reduce the word by eliminating adjacent occurrences of $e$ and $e^{-1}$ and at most half of this time to eliminate occurrences of $e$ and $e^{-1}$ in the first and last letters of the resulting word, respectively.
\end{proof}

 \begin{figure}
    \centering
    \includegraphics[width=0.5\textwidth]{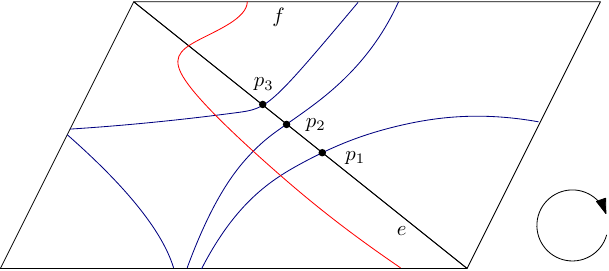}
    \caption{The curve $t$ (in red) performing a right turn to intersect the edge $e$ followed by a left turn to intersect the edge $f$, traced according to the proof of Theorem \ref{th: normal coordinates}. The black triangulation corresponds to $T$. Labeled intersection points ($p_1$,$p_2$, and $p_3$) of $s$ and $e$ as in the proof of the theorem are also highlighted. The arrow signals the orientation of $T$.}\label{fig: dehn}
\end{figure}

\begin{theorem}\label{th: minimal to pants}
    Suppose $\gamma$ is a minimal system of edge curves in $\Sigma_g$. Then one can compute, in time $O(g|T|)$, a triangulation $T'\geq T$ of size $O(m|T|)$ and a pants decomposition $\rho$ in the edges of $T'$, with $\|E_{T'}(\rho)\|=O(g|T|)$, such that $\rho$ contains $\gamma$.
\end{theorem}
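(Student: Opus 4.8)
The plan is to build the pants decomposition $\rho$ in two stages: first extend the minimal system $\gamma$ to a full system by adding curves inside each punctured-sphere component of $\Sigma_g - \gamma$, and then recursively chop each resulting pair of pants into actual pairs of pants. Recall that a minimal system $\gamma$ has $g$ components and $\Sigma_g - \gamma$ is a single $2g$-punctured sphere $S$. First I would cut along $\gamma$: using the edge-list representation $E_T(\gamma)$ this is a local operation on the dual graph of $T$, deleting the edges crossing $\gamma$ and relabelling, producing a triangulation $T_1$ of the $2g$-punctured sphere $S$ of size $O(|T|)$ in which the $2g$ boundary circles (the two copies of each curve of $\gamma$) lie on edges. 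The key combinatorial fact is that a $2g$-punctured sphere admits a pants decomposition with $2g-3$ interior curves, arranged e.g.\ as a ``caterpillar'': pick an ordering of the punctures and, for $i = 1, \dots, 2g-3$, let $\delta_i$ be a curve separating punctures $\{1,\dots,i+1\}$ from $\{i+2,\dots,2g\}$. Each $\delta_i$ can be taken disjoint from the others, and together with $\gamma$ the collection $\{\delta_1,\dots,\delta_{2g-3}\}$ gives $g + (2g-3) = 3g-3$ curves, which is exactly a pants decomposition. Moreover, because every component of $\gamma$ bounds a disk on one side in $V_\gamma$, all the $\delta_i$ are meridians of $V_\gamma$ as well (they are built from bands running along meridians), so $\rho = \gamma \cup \{\delta_1,\dots,\delta_{2g-3}\}$ is equivalent to $\gamma$ in the sense of Definition~\ref{def: disk graph}; and $\rho$ trivially contains $\gamma$.

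The work is then to realize each $\delta_i$ concretely as an edge curve in a controlled subtriangulation, and to bound complexities. I would do this iteratively. Having drawn $\delta_1, \dots, \delta_{i-1}$ on some triangulation $T_{i}' \geq T_1 \geq T$, the next curve $\delta_i$ can be obtained as follows: in the current surface-with-boundary it is freely homotopic to a curve made of two arcs running parallel to the boundary component ``puncture $i+1$'' together with an arc of $\delta_{i-1}$ (or of $\partial S$ near puncture $1$, when $i=1$); equivalently, $\delta_i$ is the boundary of a regular neighbourhood of $\delta_{i-1} \cup (\text{an arc}) \cup (\text{boundary circle } i+1)$. To put such a curve on the edges, I first convert $\delta_{i-1}$ (an edge curve) to a thin normal twin via Proposition~\ref{prop: triangulation to normal}, form the union with the relevant boundary arc, and then apply Proposition~\ref{prop: normal to triangulation} to produce a subtriangulation in which the normal curve lies on edges. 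Each such step takes time polynomial in the current complexity and multiplies the triangulation size by a bounded factor; the connecting arc between consecutive punctures can be chosen to have edge-length $O(|T|)$ in the relevant subtriangulation (a shortest path in the dual graph), so after $2g-3$ steps the triangulation has size $O(g m |T|)$ — more precisely, each of the $g$-ish rounds adds $O(m|T|)$ edges — and $\|E_{T'}(\rho)\| = O(mg|T|)$, with total running time $O(mg|T|)$ as claimed.

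The main obstacle I anticipate is the bookkeeping of complexities: naively iterating ``convert to normal, add an arc, re-triangulate'' could blow up the triangulation size multiplicatively at each of the $\Theta(g)$ steps, giving something like $|T|^{\Theta(g)}$ rather than $O(gm|T|)$. To avoid this one must be careful that the curves $\delta_1, \dots, \delta_{2g-3}$ and the connecting arcs are all drawn simultaneously with respect to a \emph{single} refinement of $T_1$, rather than refining anew for each curve. Concretely, I would fix once and for all a spanning structure of the dual graph of $T_1$ that simultaneously exhibits short arcs between all consecutive pairs of boundary circles, draw the whole multicurve $\{\delta_i\}$ at once (they are disjoint and each has edge-complexity $O(|T|)$ in $T_1$ after one normal-to-edge conversion, since none of them needs to intersect $\gamma$ and they only wind around punctures a bounded number of times), and then invoke Proposition~\ref{prop: normal to triangulation} a single time on the union $\gamma^{\pm} \cup \{\delta_i\}$, whose total intersection-word complexity is $O(mg + |T| \cdot g) = O(g\,|T| + m)$. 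That single refinement has size $O(gm|T|)$ and contains all of $\rho$ on its edges. The remaining points — that $\Sigma_g - \rho$ is a disjoint union of $2g-2$ pants (a direct Euler-characteristic and puncture count), that each $\delta_i$ is a meridian of $V_\gamma$ (each is a band-sum of meridians, so it bounds a disk obtained by tubing the corresponding compression disks), and hence that $\rho$ is equivalent to $\gamma$ — are then routine verifications.
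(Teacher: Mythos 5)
Your proposal is correct and takes essentially the same route as the paper: cut along $\gamma$ to get a $2g$-punctured sphere, produce $2g-3$ additional curves as band sums of boundary components joined by shortest paths in the dual graph, and argue these band sums are meridians by tubing the corresponding compression disks. The only real divergence is in how the complexity is controlled --- you propose drawing all the $\delta_i$ simultaneously on a single refinement of the cut triangulation, whereas the paper iterates Lemma~\ref{lm: cut punctured sphere} and observes that each breadth-first search only ever explores an $O(|T|)$-size subgraph of the evolving dual graph; the paper also writes out the essentiality check for the new curves (the two cases of $b,b'$ non-isotopic versus twins) that you defer as a routine verification.
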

Before proving Theorem \ref{th: minimal to pants}, we will first establish the following lemma.
 \begin{lemma}\label{lm: cut punctured sphere}
    Suppose that $T$ is a triangulation of a $n$-punctured sphere, $S^{2}_n$, $n\geq 4$, with boundary components $\beta=\{b_1,\dots,b_n\}$ of edge complexity at most $m$. 
    Then one can find, in time $O(|T|)$, an intersection word of a normal curve $d$ of complexity $O(|T|)$ that separates $S^{2}_n$ into a pair of pants and $S^2_{n-1}$.
\end{lemma}

\begin{proof}
    Appealing to Proposition \ref{prop: triangulation to normal}, we compute an intersection word representation of the boundary components, $I_T(\beta)=\{I_T({b_1}),\dots I_T({b_n})$\}; because these are boundary components, there is no choice of twin to be made.
    Greedily choose one $b\in \beta$ and one face $t$ ajacent to $b$. 
    Using breadth-first search, we find, in time $O(|T|)$, the shortest path $p$ in the dual graph starting from $t$ to any a face adjacent $E_T(b')$, $b'\in \beta\backslash b$---note that $t$ might be adjacent to either an edge or a vertex of $E_T(b)$ and $E_T(b')$.
    We can encode $p$ as a list of faces of $T$.
    In time $O(\|E_T(b)\|)=O(|T|)$, we update $p$ by iteratively removing from this list all but the last face adjacent to $b$.
    By construction, $p$ intersects each edge of the triangulation at most once and does not intersect any face of the triangulation adjacent to a curve in $\beta$, except for exactly one face adjacent to $E_T({b})$ and one face adjacent to $E_T({b'})$.
    In particular, $\|I_T(p)\|=O(|T|)$.
    We define the curve $d$ as the concatenation of $b,p$, and $b'$, i.e., $I_T(d)=I_T(b)I_T(p)I_T(b')I_T(p)^{-1}$ (Figure \ref{fig: proof eric}, note that this operation is sometimes called a \textit{band sum of} $b$ \textit{and} $b'$),  where we permute the words $I_T(b)$ and $I_T(b')$ so they start and end with letters representing edges on the respective faces of the triangulation intersected by $p$. 
    Moreover, because $p$ intersects only one such face for $b$ and $b'$, no reductions are necessary to make $I_T(d)$ normal. 
    We notice that, since it is separating, $d$ is isotopic to neither $b$ nor $b'$ in $S^{2}_n$.
\end{proof}

\begin{figure}
    \centering
    \includegraphics[width=0.35\textwidth]{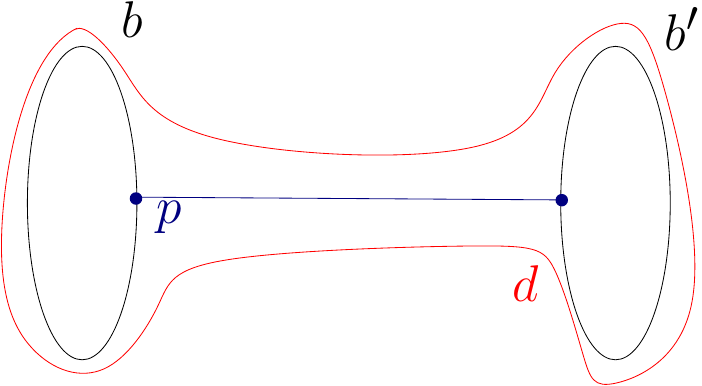}
    \caption{Diagram representing the curves $b$, $b'$ (black) and $d$ (red) and the arc $p$ (blue) defined in the proof of Lemma \ref{lm: cut punctured sphere}.}\label{fig: proof eric}
    \end{figure}

\begin{proof}[Proof of Theorem \ref{th: minimal to pants}]
    Recall that $\Sigma_g- \gamma$ is homeomorphic to $S_{2g}^2$, whose boundary components we call $\beta_0$. 
    Therefore, we proceed by recursively applying Lemma \ref{lm: cut punctured sphere} to construct $2g-3$ curves $d_1,\dots, d_{2g-3}$ such that $S_{2g}^2- d_1$ is a pair of pants and a copy of $S_{2g-1}^2$, $S^2_{2g-1}- d_2$ is a pair of pants and copy of $S^2_{2g-2}$, etc.
    At the end of the $i$-th step, we change the triangulation as in Proposition \ref{prop: normal to triangulation} so that the curve $d_i$ is in the edges of a cellular embedding of $\Sigma_g$ (we do not, however, transform the overlay graphs $T_{d_i}$ into triangulations yet). 
    Each tracing iteration takes linear time on $|T|$. 
    The induction's base step is a direct application of the lemma; for the $i$-th step, we let $\beta_i=\{b_1,\dots,b_{2g-i}\}$ be the boundary components of the punctured sphere $S_{2g-i}^2$. Each $b_j\in \beta_i$ is either a component of $\beta_0$ or a separating curve $d_j, 1\leq j\leq i-1,$ constructed in a previous step. We then apply Lemma \ref{lm: cut punctured sphere} to this multicurve $\beta_i$. Although we complicate the dual graph at each step by adding edges to represent $d_i$ and cutting the surface along them, the breadth-first iterations need only to consider paths on a subgraph of the current dual graph with at most $O(|T|)$ nodes, for $p$ does not cross faces adjacent to curves that it does not connect and $d_i$ are all separating. In total, we look for $2g-3$ shortest paths, giving a time complexity of $O(g\times( m+|T|))=O(g|T|)$. We finally transform the overlay graph $T_\rho$ into a triangulation in time $O(g|T|)$.
    
    It remains to show that the curves $d$ are meridians. Assume they are not essential. If $b$ and $b'$ are not isotopic when regluing $\Sigma_g- \beta_{2g-3}$ to form $\Sigma_g$, giving an orientation to the curves involved, the path $b\circ p\circ (b')^{-1}\circ  p^{-1} $ would be contractible (with $\circ$ representing the usual path concatenation operation), meaning that $b$ and $b'$ are homotopic in $\Sigma_g$ and, consequently, isotopic.  
    If, on the other hand, $b$ and $b'$ are isotopic in the reglued $\Sigma_g$, by the construction in Lemma \ref{lm: cut punctured sphere}, they are also isotopic to a single component of $\gamma$ and $p$ is a non-homology trivial cycle $\Sigma_g$ with $|b\cap p|=i(p,b)=1$. Using $|b\cap p|$ as a base point, the loop $d= b\circ p \circ b^{-1}\circ p^{-1}$ is contractible on $\Sigma_g$ only in the ignored case where $g=1$, when $b$ and $p$ commute in the fundamental group (refer to \cite[Chapter 1]{hatcher2002algebraic} for the definition of fundamental group). 
    Finally, we refer to the construction in Section 2.2 of \cite{johnson2006notes} for the fact that $p$ bounds a compressing disk on $V_\gamma$.
\end{proof}

\begin{theorem}\label{th: get diskbusting}
    Suppose $\gamma$ is a normal minimal system in $\Sigma_g$ with respect to a triangulation $T$, with $\|I_T(\gamma)\|=m$. Then there is a deterministic algorithm that outputs, in time $O((m+|T|)g^2)$, a disksbuting curve $s$ for the disk graph $K_\gamma$, which is normal to $T$ and is represented by intersection word of complexity $O(g|T|)$.
\end{theorem}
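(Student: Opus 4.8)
The plan is to use Theorem \ref{th: diskbusting} as the target: it suffices to produce a pants decomposition $\rho$ equivalent to $\gamma$ together with a curve $s$ that is seamed for $\rho$, i.e.\ meets every pair of pants of $\Sigma_g-\rho$ in at least one copy of each of the three isotopy types of seam. First I would invoke Theorem \ref{th: minimal to pants} to extend $\gamma$, in time $O(mg|T|)$, to a pants decomposition $\rho$ lying on the edges of a refined triangulation $T'\geq T$ of size $O(mg|T|)$, with $\|E_{T'}(\rho)\|=O(mg|T|)$ and $\rho$ equivalent to $\gamma$ (note I must first convert $\gamma$ from an intersection word to an edge-list representation, which one can do by a combination of Proposition \ref{prop: normal to triangulation} — producing a triangulation in whose edges $\gamma$ lies — at a cost of $O(m+|T|)$; so the input to Theorem \ref{th: minimal to pants} has complexity $O(m+|T|)$ and the resulting bounds should be restated accordingly).

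Next I would build $s$ directly on the cellular complex $\Sigma_g-\rho$, whose components are the $2g-2$ pairs of pants. In each pair of pants $P$, pick one arc of each of the three seam types; concretely, in the dual graph of the triangulation restricted to $P$ one finds, by breadth-first search between the two relevant boundary edges, a short arc realizing each seam type, each of length $O(|T'|)=O(mg|T|)$ — actually one can do better and bound each seam by $O(|T|)$ since the BFS only traverses a subgraph coming from one pair of pants, giving the claimed $O(g|T|)$ total. This yields a collection of $3(2g-2)=O(g)$ disjoint arcs inside $\Sigma_g-\rho$, with endpoints on the curves of $\rho$. I would then stitch these arcs into a single closed curve by doing boundary surgeries along $\rho$ (the surgery pictured on the right of Figure \ref{fig: seams}): pair up arc-endpoints lying on the same curve of $\rho$ and join them by small sub-arcs running parallel to $\rho$, choosing the pairing so that the result is connected (a spanning-tree argument on the graph whose nodes are arc-components and whose edges record which curve of $\rho$ they can be joined along). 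Connectivity of $\Sigma_g$ and the fact that the seam arcs already touch every pair of pants guarantee such a pairing exists; each joining sub-arc can be taken normal and of complexity $O(|T|)$ per curve of $\rho$ it follows, so the total intersection-word complexity of $s$ is $O(g|T|)$ as asserted. Normality of $s$ can be enforced by a final cyclic reduction at the same asymptotic cost.

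For correctness I would argue that $s$ so constructed is seamed for $\rho$ by design — every pair of pants sees all three seam types because we put them there, and the surgery arcs run parallel to $\rho$ so they add only boundary-parallel pieces that do not destroy any seam — hence by Theorem \ref{th: diskbusting}, $s$ is diskbusting in $V_\gamma$. The running time is dominated by the BFS steps: re-triangulating and searching within each of the $O(g)$ pairs of pants costs $O((m+|T|)\cdot(\text{size of that region}))$, and summing over the regions and the three seam types gives the stated $O((m+|T|)mg^2)$. The main obstacle I anticipate is the bookkeeping of the surgery/stitching step: one must verify that the endpoints can always be paired up so that (i) the resulting curve is a single connected simple closed curve rather than a multicurve, (ii) no seam arc gets ``capped off'' into an inessential loop, and (iii) the complexity bound $O(g|T|)$ genuinely holds rather than $O(g\cdot|T'|)=O(mg^2|T|)$ — this last point likely requires being careful that the seam arcs are found in the original triangulation $T$ (or a bounded-size local refinement) rather than in the globally refined $T'$, and that the surgery arcs are routed along $\rho$ using only $O(1)$ intersections per triangle of $T'$ that $\rho$ traverses.
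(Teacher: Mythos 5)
Your proposal follows essentially the same route as the paper: extend $\gamma$ to an equivalent pants decomposition via Theorem \ref{th: minimal to pants}, draw one seam of each type in every pair of pants by breadth-first search in the dual graph, stitch the pieces into a single connected curve by surgery along $\rho$, and conclude diskbusting from Theorem \ref{th: diskbusting}. The only (immaterial) difference is bookkeeping: the paper designates two intersection points on each component of $\rho$ in advance so the seams automatically close up into a multicurve of at most $g+1$ components, which is then connected by the surgery of Figure \ref{fig: seams}, rather than pairing loose arc-endpoints afterwards as you do.
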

\begin{proof}
    Use Proposition \ref{prop: normal to triangulation} to get an edge list representation of $\gamma$ in a new triangulation $T'$ of complexity $O(m+|T|)$ and then Theorem \ref{th: minimal to pants} for an equivalent pants decomposition $\rho\supseteq\gamma$ of complexity $O(g|T'|)$ when represented by an edge list in another triangulation $T''$ of $\Sigma_g$. 
    We will construct a multicurve $\sigma$, seamed for the pants decomposition $\rho$, that intersects each component exactly twice. 

    For each component $r$ of $\rho$, select two edges contained in $E_T(r)$ to be points of intersection with $\sigma$. 
    For every connected component $P$ of $\Sigma_g-\rho$ (i.e., $P$ is a pair of pants), we connect each intersection point of a boundary component of $P$ to other intersection points in the two other components of $\partial P$.
    To avoid $\sigma$ of self-intersecting, we separately draw in each pair of pants a seam at a time, using breadth-first search in the dual graph as in the proof of Theorem \ref{th: minimal to pants}, recording the intersection words, and changing the dual graph so that no seam can cross a previously traced seam. 
    Even though we change the dual graph, making it more complicated at each drawing of a seam, because the seams are local within pairs of pants, each application of breadth-first search considers only $O(|T''|)$ nodes and edges. 
    At the end of the process, we have a multicurve $\sigma$ that, although seamed for the pants decomposition $\rho$, may have up to $g+1$ connected components. 
    It therefore remains to modify $\sigma$ so it has just a single component. 

    \begin{figure}
    \centering
    \includegraphics[width=0.7\textwidth]{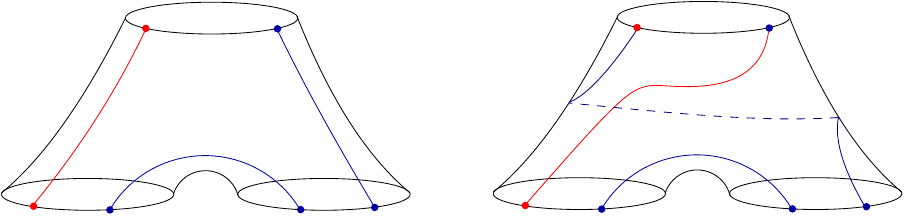}
    \caption{Left: the three types of seams, up to isotopy, on a pair of pants. Two connected components (a red and a blue one) of the multicurve $\sigma$ are highlighted. Right: a surgery to connect the two distinct components.}\label{fig: seams}
    \end{figure}

    Whenever there still are disconnected components in $\sigma$, there exists a pair of pants intersected by at least two distinct components of $\sigma$, refer to Figure \ref{fig: seams}. 
    We can then do the surgery on the right side of Figure \ref{fig: seams} to connect the two components. 
    Using breadth-first, this takes a total time of $O(g|T''|)$ and yields a connected curve $s$ with intersection word $I_{T''}(s)$. 
    Because, by construction, $s$ is seamed for each pair of pants from $\Sigma_g- \rho$, by Theorem \ref{th: diskbusting}, $s$ is diskbusting for $\gamma$. 
    Finally, we compute an intersection word of $s$ with respect to $T$ in time $O(|T''|)$ by deleting the edges in $T''\backslash T$. 
    Note that $s$ is already normal for $T$ as, by construction, $s$ is standard with respect to $T''\geq T'\geq T$ and, because all paths are shortest, no cyclic reductions are possible.
\end{proof}

\begin{lemma}\label{lm: stabilization}
    Suppose $(\Sigma_g,\alpha,\beta)$ is a Heegaard diagram, with $\alpha$ and $\beta$ given as intersection words in a triangulation $T$ of $\Sigma_g$, where we let $m=\max\{\|I_T(\alpha)\|,\|I_T(\beta)\|\}$. 
    Then one can compute, in time $O(m)$, a triangulation $T'$ and new multicurves $\alpha',\beta'$ which represent a stabilization $(\Sigma_{g+1},\alpha',\beta')$ of the original diagram, with $O(|T'|)=O(|T|)$ and $\|I_{T'}(\alpha')\|,\|I_{T'}(\beta')\|= O(m)$.
\end{lemma}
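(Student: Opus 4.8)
The plan is to realize, at the level of triangulations, the stabilization move recalled in Section~\ref{sec: background}: we attach a trivial $1$-handle to $\Sigma_g$ inside a single face of $T$, away from $\alpha$ and $\beta$, and take for $c,c'$ the meridian and a longitude of the resulting tube. Pick any face $t=\triangle v_1v_2v_3$ of $T$. Since $\alpha\cap t$ and $\beta\cap t$ are finitely many proper arcs, their union is a graph in the disk $t$ whose complement has nonempty interior, so we may fix a small triangle $t^{\ast}\subset t$ disjoint from $\alpha\cup\beta$. Re-triangulate $t$ by: keeping $\partial t$; adding the boundary triangle $\partial t^{\ast}=\triangle u_1u_2u_3$; triangulating the annulus $t\setminus\operatorname{int}(t^{\ast})$ by the standard six-triangle ``prism'' on the vertices $v_i,u_j$; and triangulating $t^{\ast}$ itself as a pair of pants $t^{\ast}\setminus(D_1\cup D_2)$ ($O(1)$ triangles) glued along $\partial D_1,\partial D_2$ to a two-triangle tube $A$. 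This yields a triangulation $T'$ of the closed genus-$(g+1)$ surface, with $|T'|=|T|+O(1)$, agreeing with $T$ outside $t$; in particular the three edges of $t$, and all of $\alpha,\beta$ lying outside $t$, are untouched. Let $c$ be the core circle of $A$ and $c'$ a loop running once along $A$ and closing up through an arc of $t^{\ast}\setminus(D_1\cup D_2)$ joining $\partial D_1$ to $\partial D_2$; then $i(c,c')=1$, both $c$ and $c'$ lie inside the attached handle (hence are disjoint from $\alpha\cup\beta$), and both are normal of complexity $O(1)$ in $T'$. Put $\alpha'=\alpha\cup\{c\}$ and $\beta'=\beta\cup\{c'\}$; by the description of stabilization in Section~\ref{sec: background}, $(\Sigma_{g+1},\alpha',\beta')$ is a stabilization of $(\Sigma_g,\alpha,\beta)$.

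It remains to re-express $\alpha$ and $\beta$ as intersection words in $T'$ and bound the complexity. Outside $t$ nothing changes, and the three edges of $t$ survive, so all crossings of $\alpha,\beta$ with them are inherited verbatim. Inside $t$, the sub-arcs in which $\alpha$ and $\beta$ meet $t$ --- there are $O(m)$ of them --- must be redrawn inside the annulus $t\setminus\operatorname{int}(t^{\ast})$, which is possible since they avoid $t^{\ast}$, keeping the $\alpha$-arcs (resp.\ $\beta$-arcs) mutually disjoint and preserving every $\alpha$-$\beta$ crossing. The key point is that this can be arranged so that each redrawn arc crosses only $O(1)$ of the six new edges $v_iu_j$: up to isotopy rel endpoints and rel not meeting $t^{\ast}$, a chord of the triangle $t$ is of one of three combinatorial types according to which vertex it separates, and each type can be drawn boundary-parallel so as to skirt the central hole $t^{\ast}$, crossing only a constant number of the new edges; parallel (nested) chords of the same type then cross the same edges, at distinct points. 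Since there are $O(m)$ such chords, they contribute $O(m)$ new crossings in total, whence $\|I_{T'}(\alpha)\|,\|I_{T'}(\beta)\|=O(m)$; adding the $O(1)$ contributions of $c$ and $c'$ gives $\|I_{T'}(\alpha')\|,\|I_{T'}(\beta')\|=O(m)$. Drawing every arc monotonically across triangles (never re-crossing an edge it just crossed) keeps $\alpha',\beta'$ normal for $T'$.

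Every step is local to $t$ once $t$ is chosen: reading the input and isolating the portions of $\alpha\cup\beta$ in $t$ costs $O(m+|T|)$, assembling $t^{\ast}$, the prism, and the $O(1)$-size handle gadget costs $O(1)$, and emitting the $O(m)$ redrawn sub-words together with the words of $c,c'$ costs $O(m)$; this is within the claimed accounting. The one genuinely delicate point --- and the step I expect to have to argue with care --- is the middle one: simultaneously redrawing the (possibly $\Theta(m)$ many, mutually crossing) arcs of $\alpha\cup\beta$ that lie in $t$ inside a fixed $O(1)$-triangle annulus while (i) preserving disjointness within $\alpha$, within $\beta$, and all $\alpha$-$\beta$ intersection points, and (ii) spending only $O(1)$ new edge-crossings per arc. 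This is precisely where the two competing demands $|T'|=O(|T|)$ and $\|I_{T'}(\alpha')\|,\|I_{T'}(\beta')\|=O(m)$ are reconciled, and the reason the handle is inserted through one pre-existing face rather than, say, after globally subdividing along an overlay graph.
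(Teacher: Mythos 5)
Your proposal is correct and follows essentially the same strategy as the paper's proof: insert a constant-size handle gadget inside a single face $t$ of $T$ (the paper glues a fixed triangulation of a once-punctured torus onto an inner triangle of a subdivided $t$, you build a tube between two disks in a small sub-triangle $t^{\ast}$ — topologically the same move), take the meridian/longitude pair of constant complexity in the new handle, and reroute each of the $O(m)$ arcs of $\alpha\cup\beta$ in $t$ around the handle region at a cost of $O(1)$ new edge crossings per arc. The delicate rerouting step you flag is handled in the paper exactly as you sketch it, by noting each normal arc in $t$ has one of finitely many combinatorial types and inserting a fixed short subword (e.g.\ replacing $ef$ by $ex_1x_2f$) for each.
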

\begin{proof}
    Choose any triangular face $t$ of $T$ of sides $e,f,$ and $g$, as in the left side of Figure \ref{fig: stabilization}.
    As in the figure, we add three vertices and nine new edges to $t$, which will now have an inner triangle of sides $\widetilde{e},\widetilde{f},$ and $\widetilde{g}$ on its center, ultimately retriangulating the face $t$ into a union of triangles $\widetilde{t}$. 
    Call $\widetilde{T}$ the triangulation $T$ with $t$ substituted for $\widetilde{t}$. 
    Note that $\alpha$ and $\beta$ can be both isotoped not to intersect the inner triangle. Moreover, one can compute $I_{\widetilde{T}}(\alpha)$ and $I_{\widetilde{T}}(\beta)$ by iterating over the intersection words $I_T(\alpha)$ and $I_T(\beta)$ and inserting the intersections with the edges in $\widetilde{T}\backslash T$ in between any adjacent pair of letters that represent an arc in $t$. 
    For example, referring again to Figure \ref{fig: stabilization}, we substitute any occurrence of the pair $ef$ in $I_T(\alpha)$ and $I_T(\beta)$ for $ex_1x_2 f$.

    Now let $T^2- t'$ be the torus with a triangle removed and endow it with a fixed-size triangulation (right side of Figure \ref{fig: stabilization}). 
    Let $a$ and $b$ be two essential curves intersecting each other once in $T^2- t'$, normal to the endowed triangulation, and given as intersection words.
    The triangulation $T$ given by identifying $\widetilde{t}$ to $t'$ represents $\Sigma_{g+1}$. 
    Moreover, the splitting $(\Sigma_{g+1},\alpha',\beta')$ where $\alpha'=\alpha\cup \{a\}$ and $\beta'=\beta\cup \{b\}$ is a stabilization of $(\Sigma_g,\alpha,\beta)$.
\end{proof}

    \begin{figure}
    \centering
    \includegraphics[width=0.7\textwidth]{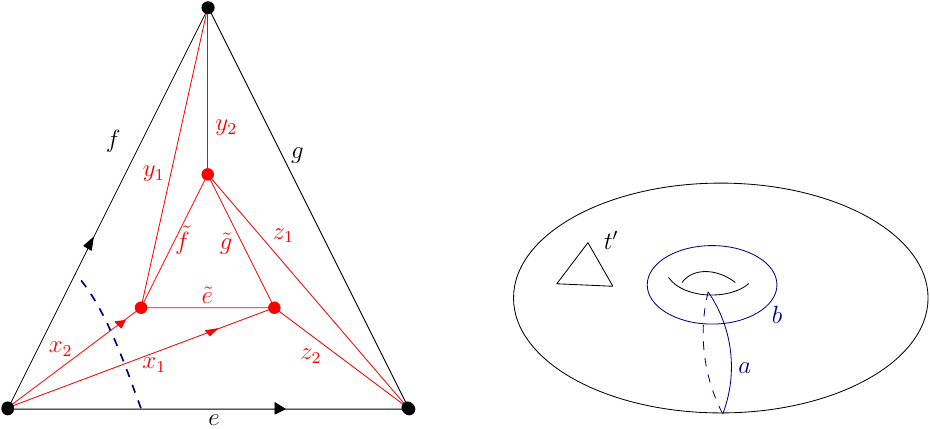}
    \caption{The diagrams referred to in the proof of Lemma \ref{lm: stabilization}. Left: the triangle $t$ (black) and its subtriangulation $\widetilde{t}$ (red). The subarc $ef$ is shown in dashed blue. The orientation of the edges $e$, $f$, $x_1$, and $x_2$ are shown by the small arrows. Right: the surface $T^2-t'$ and the curves $a $ and $b$ (blue).}\label{fig: stabilization}
    \end{figure}




\section{Restricting the topology of 3-manifolds}
\label{sec: main results}
Given a Heegaard diagram $(\Sigma_g,\alpha, \beta)$ and a fixed $k\in \Z^+$, the algorithm of Theorem \ref{th: properties} uses Theorems \ref{th: yoshi} and \ref{th: vafa} to construct a new diagram $(\Sigma_{g+1}, \alpha,\tau^{n}_s(\beta))$ with Hempel distance at least $k$, without altering the associated RT-invariant in the process, where $n$ is some integer multiple of the associated Vafa's constant and $s$ is a curve distant enough from both $K_\alpha$ and $K_\beta$. 
The main challenge of the algorithm comes, however, from building such a curve $s$.
We address the problem by first computing, through Proposition \ref{prop: launching from diskbusting},
two curves, say $s_\alpha$ and $s_\beta$, of distance at least $k$ from $K_\alpha$ and $K_\beta$, respectively. 
We then use these two curves in Proposition \ref{prop: get large distance from s_alpha and s_beta} to construct a single curve $s$, whose distances to $K_\alpha$ and $K_\beta$ satisfy the hypotheses of Theorem \ref{th: yoshi}. 

We establish Proposition \ref{prop: launching from diskbusting} incrementally, first using Theorem \ref{th: diskbusting} to find a curve $s$ diskbusting to $K_\gamma$, increasing $d(K_\gamma,s)$ to at least 3 using Lemma \ref{lm: distance 3 curve}, and finally making the distance bigger than $k$.

\begin{lemma}\label{lm: distance 3 curve}
    Let $\gamma$ be a minimal system in $\Sigma_g$, normal to a triangulation $T$ of $\Sigma_g$, and represented by an intersection word $I_T(\gamma)$ with $\|I_T(\gamma)\|=m$. One can compute, in time $O((gm|T|)^{9})$, an intersection word of a normal curve $s'$ of complexity $O((gm|T|)^9)$, with $d(K_\gamma, s)\geq 3$.
\end{lemma}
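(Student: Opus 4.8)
The plan is to obtain $s'$ by applying to one component of $\gamma$ the product of Dehn twists prescribed by Yoshizawa's Theorem~\ref{th: yoshi 2}, after first manufacturing a diskbusting curve that is seamed for a pants decomposition containing $\gamma$. First, run the algorithm of Theorem~\ref{th: get diskbusting} on $\gamma$: in time $O((m+|T|)mg^2)$ it returns a normal curve $s$ of complexity $O(g|T|)$ that is diskbusting for $K_\gamma$. Inspecting its proof --- which draws $s$ one seam at a time inside the pairs of pants of a decomposition $\rho\supseteq\gamma$ produced by Theorem~\ref{th: minimal to pants} --- the curve $s$ is in fact seamed for a pants decomposition $\rho$ containing $\gamma=\{c_1,\dots,c_g\}$, so the hypotheses of Theorem~\ref{th: yoshi 2} are satisfied; the algorithm is made to also report this labeling of the components of $\gamma$ (any labeling works, since relabeling $\gamma$ relabels the $d_i$ below accordingly). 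Since $\gamma$ and $s$ are normal for the same triangulation $T$, Theorem~\ref{th: normal coordinates} applies to the twists below.

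Next, compute the multicurve $\tau^2_s(\gamma)$, of components $d_1,\dots,d_g$ with $d_i=\tau^2_s(c_i)$, by a single call to Theorem~\ref{th: normal coordinates} (twisting curve $s$, twisted multicurve $\gamma$, exponent $k=2$); this returns $D:=\tau^2_s(\gamma)$ normal with $\|I_T(D)\|=O((gm|T|)^3)$ in time $O((gm|T|)^3)$. Then set $s':=\tau^2_D(c_1)$ and compute it by a second call to Theorem~\ref{th: normal coordinates} (twisting multicurve $D$, twisted curve $c_1$, exponent $k=2$), giving a normal curve $s'$ with $\|I_T(s')\|=O((gm|T|)^9)$ in time $O((gm|T|)^9)$, which dominates every earlier step (in particular $O((m+|T|)mg^2)$). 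For correctness, note that $d_1,\dots,d_g$ are pairwise disjoint, being the images of the disjoint curves $c_i$ under the homeomorphism $\tau^2_s$; hence $\tau^2_D=\tau_{d_g}^2\circ\cdots\circ\tau_{d_1}^2$ and Theorem~\ref{th: yoshi 2} gives exactly $d(K_\gamma,s')\geq 3$.

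The one real subtlety, and the reason for computing $\tau^2_D(c_1)$ in a single step rather than iterating, is that taking Theorem~\ref{th: yoshi 2} at face value as a composition of $g$ Dehn twists and evaluating it factor by factor would cube the complexity $g$ times --- exponential in $g$. The disjointness of the $d_i$ collapses this composition into one Dehn twist about the multicurve $D$, which Theorem~\ref{th: normal coordinates} handles at the cost of a single cubing, keeping the whole procedure polynomial. The remaining points are routine bookkeeping: that Theorem~\ref{th: normal coordinates} outputs curves that are normal for $T$ (built into that algorithm), and that the $\max\{g|T|,m\}$-type bounds simplify to $O(gm|T|)$ using $g,m,|T|\geq 1$.
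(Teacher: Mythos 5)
Your proposal is correct and follows essentially the same route as the paper: build a seamed diskbusting curve $s$ via Theorem~\ref{th: get diskbusting}, compute $\tau^2_s(\gamma)$ with Theorem~\ref{th: normal coordinates}, and then apply Theorem~\ref{th: yoshi 2} to get $s'$ with $d(K_\gamma,s')\geq 3$. Your observation that the composition $\tau^2_{d_g}\circ\cdots\circ\tau^2_{d_1}$ must be evaluated as a single Dehn twist about the multicurve $\tau^2_s(\gamma)$ (rather than factor by factor, which would be exponential in $g$) is exactly the justification for the $O((gm|T|)^9)$ bound that the paper asserts without spelling out.
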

\begin{proof}
    Suppose $\gamma$ has components $\{c_1,\dots,c_g\}$. Construct, using Theorem \ref{th: diskbusting}, a curve $s$, seamed for a pants decomposition $\rho\supseteq\gamma$, and of complexity $O(g|T|)$. Once this $s$ is constructed, we can use Theorem \ref{th: yoshi 2} to find the desired curve $s'$.
    
    Explicitly, first compute an intersection word of complexity $O(2\max\{g|T|,m\}^3)$ for the multicurve $\tau^2_{s}(\gamma)$, of components $d_1,\dots,d_n$, using Theorem \ref{th: normal coordinates}. Once again, use Theorem \ref{th: normal coordinates} to compute $s' = \tau^{2}_{d_g}\circ \dots \circ \tau^{2}_{d_1}(c_1)$ in time $O(16\max\{g|T|,m\}^9)$. Theorem \ref{th: yoshi 2} guarantees that such a curve $s'$ has the desired property.
\end{proof}


\begin{proposition}\label{prop: launching from diskbusting}
     Let $\gamma$ be a minimal system in $\Sigma_g$, normal with respect to some triangulation $T$. Then, for a fixed $3\leq k\in \Z^+$, one may compute, in time $c_2^{O(k^{c_1})}\log k$, where $c_1=\log 3$ and $c_2=O(gm|T|k)$, the intersection word of a curve $s$, normal to $T$ and of complexity $c_1^{O(k^{c_2})}$, for which $d(K_\gamma, s) > k$.
\end{proposition}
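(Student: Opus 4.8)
The plan is to bootstrap the curve $s_0$ supplied by Lemma~\ref{lm: distance 3 curve} (which satisfies $d(K_\gamma,s_0)\geq 3$) into a curve of distance $>k$ by iterating Theorem~\ref{th: yoshi} about $\log_2 k$ times, each iteration roughly doubling the guaranteed distance from $K_\gamma$ at the cost of a single Dehn twist. Two observations drive this. First, Theorem~\ref{th: yoshi} can be applied to the \emph{diagonal} pair $\alpha=\beta=\gamma$, so that $n=\max\{1,d(K_\gamma,K_\gamma)\}=1$; its hypotheses on a curve $s$ then reduce to $d(K_\gamma,s)\geq 2$, and its conclusion becomes $\min(k',\,2d(K_\gamma,s)-2)\leq d\bigl(K_\gamma,K_{\tau^{k'+3}_s(\gamma)}\bigr)$ for every $k'\in\Z^+$. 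Second, distance from $K_\gamma$ passes to meridians: if $c$ is a meridian of a handlebody $V_{\beta'}$ then $c\in K_{\beta'}$, hence $d(K_\gamma,c)\geq d(K_\gamma,K_{\beta'})$. Thus twisting $\gamma$ along a curve far from $K_\gamma$ produces a full system far from $K_\gamma$, and any component of it is again a curve far from $K_\gamma$ --- essentially twice as far.

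Concretely, I would fix one component $c$ of $\gamma$, set $c_0:=s_0$ and $\delta_0:=3$, and recursively define $c_{i+1}:=\tau^{\,2\delta_i+1}_{c_i}(c)$ and $\delta_{i+1}:=2\delta_i-2$ (note $2\delta_i+1=(2\delta_i-2)+1+2$, the twist exponent $k'+n+2$ of Theorem~\ref{th: yoshi} with $k'=2\delta_i-2$ and $n=1$). Since $\tau_{c_i}$ is a homeomorphism, $\beta_{i+1}:=\tau^{\,2\delta_i+1}_{c_i}(\gamma)$ is again a minimal system having $c_{i+1}$ as one of its components, so $V_{\beta_{i+1}}$ is a handlebody and $c_{i+1}\in K_{\beta_{i+1}}$. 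Assuming inductively $3\leq\delta_i\leq d(K_\gamma,c_i)$ (the base case is Lemma~\ref{lm: distance 3 curve}, and $\delta_i\geq 3\Rightarrow\delta_{i+1}=2\delta_i-2\geq 3$), the hypotheses of Theorem~\ref{th: yoshi} with $\alpha=\beta=\gamma$, $s=c_i$ hold ($d(K_\gamma,c_i)\geq 2$ and $2d(K_\gamma,c_i)-2\geq 4>1=n$), and it yields
\[
\delta_{i+1}=2\delta_i-2=\min\bigl(2\delta_i-2,\;2d(K_\gamma,c_i)-2\bigr)\;\le\; d\bigl(K_\gamma,K_{\beta_{i+1}}\bigr)\;\le\; d(K_\gamma,c_{i+1}),
\]
closing the induction. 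The recursion $\delta_0=3$, $\delta_{i+1}=2\delta_i-2$ solves to $\delta_i=2^i+2$, so after $\ell=O(\log k)$ steps $\delta_\ell>k$; I would output $s:=c_\ell$, which satisfies $d(K_\gamma,s)\geq\delta_\ell>k$ and is normal for $T$, since each $c_{i+1}$ is produced by Theorem~\ref{th: normal coordinates} (which keeps the triangulation fixed) and $c$, $s_0$ are normal for $T$ by hypothesis and by Lemma~\ref{lm: distance 3 curve}.

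For the bookkeeping: because $\ell$ is the first index with $\delta_\ell>k$, we have $\delta_i\leq\delta_{\ell-1}\leq k$ for $i<\ell$, so every twist exponent $2\delta_i+1$ is at most $2k+1\leq 4k$. By Theorem~\ref{th: normal coordinates} each iteration replaces the edge-complexity $a_i=\|I_T(c_i)\|$ by $a_{i+1}\leq(2\delta_i+1)\max(a_i,m)^3\leq 4k\,a_i^3$, using that $a_0=\|I_T(s_0)\|=O((gm|T|)^9)$ dominates $m=\|I_T(\gamma)\|$; iterating, $a_\ell\leq (4k)^{O(3^\ell)}(gm|T|)^{O(3^\ell)}$ with $3^\ell=3^{O(\log_2 k)}=O(k^{\log_2 3})=O(k^{1.6})$, which is bounded by the stated complexity. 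Each iteration takes time $O(a_{i+1})$ (Theorem~\ref{th: normal coordinates}), so bounding the sum of the per-iteration times crudely by $\ell$ times the last one and adding the $O((gm|T|)^9)$ cost of Lemma~\ref{lm: distance 3 curve}, the total running time is as claimed.

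The crux --- the one genuinely non-routine idea --- is this self-improving step: one must realize that at each stage it suffices to produce a \emph{single} curve far from $K_\gamma$, not a high--Hempel-distance Heegaard diagram against a prescribed second handlebody (which is how Theorem~\ref{th: yoshi} is used in Yoshizawa's setting, and which would force one to keep manufacturing fresh curves far from an \emph{evolving} handlebody). The diagonal application $(\gamma,\gamma)$ together with meridian-inheritance upgrades ``distance $\delta$'' to ``distance $2\delta-2$'' with one twist, so that only $O(\log k)$ rounds are needed; this logarithmic count is precisely what keeps the twist exponents $O(k)$ and hence the final complexity of the form $(\poly)^{O(k^{1.6})}$ rather than exponential in $k$. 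Everything else --- persistence of the hypotheses of Theorem~\ref{th: yoshi} along the iteration, the fact that $\tau_{c_i}(\gamma)$ stays a minimal system so $K_{\beta_{i+1}}$ is defined, and the complexity arithmetic through Theorem~\ref{th: normal coordinates} --- is routine.
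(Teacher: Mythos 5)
Your proposal is correct and takes essentially the same route as the paper's own proof: both iterate Theorem~\ref{th: yoshi} diagonally with $\alpha=\beta=\gamma$ starting from the distance-$3$ curve of Lemma~\ref{lm: distance 3 curve}, twist a fixed component $c$ of $\gamma$ with exponent $2^{i+1}+5$ at step $i$ (your $2\delta_i+1$ equals the paper's $k_i+3$), use the fact that the twisted curve is a meridian of the twisted handlebody to double the distance each round, and run the identical $3^{O(\log k)}=O(k^{1.6})$ complexity arithmetic through Theorem~\ref{th: normal coordinates}.
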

\begin{proof}
Start using Lemma \ref{lm: distance 3 curve} to find an intersection word of a curve $s_0$ with $d(K_\gamma,s_0)\geq 3$. 
If $k=3$ we are done, so assume otherwise. 
Define $M=\|I_T(s_0)\|$; recall that $M= O((gm|T|)^9)$. 
Let $c$ be any connected component of $\gamma$ and recursively define $s_{i+1} = \tau^{k_{i}+3}_{s_i}(c)$, where $k_i = 2^{i+1}+2$. 
We claim that, for any $\ell \in \Z^+$, $d(K_{\gamma}, s_\ell)\geq k_{\ell-1}$. We prove this inducting on $\ell$: for $\ell=1$, $k_0=4$ and, by Theorem \ref{th: yoshi} with $\alpha=\beta=\gamma$ and $n=\min\{d(K_\gamma,K_\gamma),1\}=1$, we have that
\begin{equation*}
    d(K_{\gamma},s_1)=d(K_\gamma, \tau^{k_0+n+2}_{s_0}(c))\geq  d(K_{\gamma}, K_{\tau_{s_0}^{k_0+n+2}(\gamma)})\geq 4.
\end{equation*}
Now assume $d(K_\gamma, s_\ell)\geq 2^{\ell}+2$. Again, by Theorem \ref{th: yoshi}, 
\begin{equation*}
    d(K_\gamma, s_{\ell+1}) = d(K_\gamma, \tau_{s_\ell}^{k_\ell+3}(c)) \geq d(K_\gamma, K_{\tau_{s_\ell}^{k_\ell+3}(\gamma)}) \geq 2d(K_\gamma, s_\ell)-2\geq 2(2^{\ell}+2)-2=2^{\ell+1}+2,
\end{equation*}
so, by induction, $d(K_{\gamma}, s_\ell)\geq k_{\ell-1}$.
Setting the total number of iterations at $\ell=\lceil \log (k - 2) \rceil$ and $s=s_\ell$, we have that $d(K_\gamma, s)\geq k$.

We now estimate the total computational time and the output's complexity of the algorithm.
First, we note that, by Theorem \ref{th: normal coordinates}, $\|I_T(s_{i+1})\|\leq (k_{i}+3)\|I_T(s_{i})\|^3$. Recursively, this gives
\begin{equation*}
\begin{split}
    \|I_T(s)\| &\leq M^{3^\ell}(k_0+3)^{3^{\ell-1}}(k_1+3)^{3^{\ell-2}}\dots (k_{\ell-1}+3)\\
    & \leq M^{3^\ell}(2^1+5)^{3^{\ell-1}} (2^2+5)^{3^{\ell-2}}\dots (2^{\ell}+5)\\
    &\leq M^{3^\ell}(2^{\ell}+5)^{3^{\ell-1}} (2^{\ell}+5)^{3^{\ell-2}}\dots (2^{\ell}+5)\\
    & \leq M^{3^\ell}(2^{\ell}+5)^{3^{\ell-1}+ 3^{\ell-2}+\dots +3^0}\\
    & \leq M^{3^\ell}(2^{\ell}+5)^{\frac{1}{2}\times (3^{\ell}-1)}\\
    &\leq M^{3^{\log(k-2)+2}}(2^{\log(k-2)+2}+5)^{\frac{1}{2}\times (3^{\log(k-2)+2}-1)}\\
    &\leq M^{9\times 3^{\log(k-2)}}(4k-3)^{9/2\times 3^{\log(k-2)}}\\
    &\leq M^{9 (k-2)^{\log 3}}(4k)^{5 (k-2)^{\log 3}}\\
    &\leq M^{9k^{c_1}}(4k)^{5k^{c_1}}
\end{split}
\end{equation*}
where we used the geometric series in the fifth line, $\lceil \log(k-2)\rceil \leq \log(k-2) +2$ in the sixth line, the relation $a^{\log b}=2^{\log a \log b}= b^{\log a}$ for any real $a,b>1$ in the eighth line, and $c_1=\log 3$ in the last line. The time complexity for computing $I_T(s)$ can be (very coarsely) estimated at $O(\ell \times (k_{\ell-1}+3) \|I_T(s_{\ell-1})\|^3)=O(\log k \times \|I_T(s)\|)=O((gm|T|)^{81k^{c_1}}(4k)^{5k^{c_1}} \log k)$ by noting that $\|I_T(s_i)\|\leq \|I_T(s_{\ell-1})\|$ and $k_i\leq k_{\ell-1}=k$ for all $1\leq i \leq n$.
\end{proof}

We now show Proposition \ref{prop: get large distance from s_alpha and s_beta}, starting with the following technical lemma.

\begin{lemma}\label{lm: s is close, becomes big}
    Consider some full systems $\gamma$ and $\gamma'$ in the surface $\Sigma_g$ with $d(K_{\gamma},K_{\gamma'})\geq 4$ and a curve $s$ with $d(K_{\gamma},s)<2$. Then $d(K_{\gamma'},s)\geq 2$.
\end{lemma}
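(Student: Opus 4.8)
The statement to prove is Lemma~\ref{lm: s is close, becomes big}: if $d(K_\gamma,K_{\gamma'})\geq 4$ and $d(K_\gamma,s)<2$, then $d(K_{\gamma'},s)\geq 2$.

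\textbf{Overall approach.} The plan is to argue by contradiction using the triangle inequality on the curve graph metric $d$ restricted to $C(\Sigma_g)$. The hypothesis $d(K_\gamma,s)<2$ means $d(K_\gamma,s)$ is either $0$ or $1$; since distances are integers, this says $s$ is at distance at most $1$ from the disk graph $K_\gamma$, i.e.\ there is a meridian $a\in K_\gamma$ with $d(a,s)\leq 1$ (possibly $s$ is itself a meridian of $V_\gamma$, in which case $a=s$). Suppose toward a contradiction that $d(K_{\gamma'},s)<2$ as well, so there is a meridian $b\in K_{\gamma'}$ with $d(b,s)\leq 1$. Then by the triangle inequality in $(C(\Sigma_g),d)$,
\[
d(K_\gamma,K_{\gamma'})\leq d(a,b)\leq d(a,s)+d(s,b)\leq 1+1=2,
\]
contradicting $d(K_\gamma,K_{\gamma'})\geq 4$. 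Hence $d(K_{\gamma'},s)\geq 2$.

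\textbf{Details to nail down.} The one subtlety is making precise the passage from ``$d(K_\gamma,s)<2$'' to ``there exists $a\in K_\gamma$ with $d(a,s)\le 1$'', and similarly for $\gamma'$. Recall the convention fixed in the excerpt: $d(r,A)=\min\{d(r,s'):s'\in A\}$ for $A\subset C(\Sigma_g)$, so $d(K_\gamma,s)<2$ literally gives a meridian $a$ of $V_\gamma$ with $d(a,s)=d(K_\gamma,s)<2$, hence $d(a,s)\le 1$ because graph distances are nonnegative integers. The same reasoning applied to $K_{\gamma'}$ under the contradiction hypothesis yields $b$ with $d(b,s)\le 1$. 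Then $d(K_\gamma,K_{\gamma'})=\min\{d(a',b'):a'\in K_\gamma,\ b'\in K_{\gamma'}\}\le d(a,b)$, and $d$ being a genuine metric on the vertex set of $C(\Sigma_g)$ gives $d(a,b)\le d(a,s)+d(s,b)$. Chaining the inequalities produces $d(K_\gamma,K_{\gamma'})\le 2<4$, the contradiction.

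\textbf{Expected main obstacle.} There is essentially no hard step here --- this is a pure metric-space manipulation --- so the ``obstacle'' is only one of care: ensuring the inequalities point the right way (in particular that $\min$ over a larger index set can only decrease the value, so $d(K_\gamma,K_{\gamma'})\le d(a,b)$ is the correct direction) and that the strict/weak inequalities are handled correctly using integrality of $d$. I would write it up in three lines exactly as above, with a sentence recalling that $d$ restricts to a metric on $C(\Sigma_g)$ and that $K_\gamma,K_{\gamma'}$ are subgraphs, so the triangle inequality applies to the vertices $a$, $s$, $b$.
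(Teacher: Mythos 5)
Your proof is correct and follows essentially the same route as the paper's: argue by contradiction, pick minimizers $a\in K_\gamma$ and $b\in K_{\gamma'}$ of the distances to $s$, and apply the triangle inequality to get $d(K_\gamma,K_{\gamma'})\le d(a,b)\le d(a,s)+d(s,b)<4$, a contradiction. Your extra care about integrality of the graph distance is fine but not needed, since the paper's strict bound $<2+2=4$ already suffices.
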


\begin{proof}
Assume the opposite, that is, $d(K_{\gamma'},s)<2$ and let the isotopy class $a\in K_\gamma$ be a minimizer of the distance between $s$ and $K_{\gamma}$ and the isotopy class $b \in K_{\gamma'}$ a minimizer of the distance between $s$ and $K_{\gamma'}$. Then
    \begin{equation*}
        d(a,b)\leq d(a,s)+d(s, b)< 2+2=4.
    \end{equation*}
Nonetheless, because $a\in K_{\gamma}$ and $b \in K_{\gamma'}$, $d(a,b)\geq d(K_{\gamma},K_{\gamma'})$, a contradiction.
\end{proof}

\begin{proposition}\label{prop: get large distance from s_alpha and s_beta}
Fix an integer $k\geq 4$. Consider some minimal systems $\alpha$ and $\beta$ in the surface $\Sigma_g$ for which $d(K_\alpha, K_\beta) = 0$. Then there is a curve $s$ and some full minimal system $\beta'$ (potentially $\beta'=\beta$) in $\Sigma_g$ such that
    \begin{equation}\label{eq: property}
    \begin{split}
         \min_{i=\alpha,\beta'}\{d(K_i,s)\}\geq 2,\max_{i=\alpha,\beta'}\{d(K_i,s)\}\geq k, d(K_{\alpha},s)+d(K_{\beta'},s)-2>n,\\
         \text{ and } \langle (\Sigma_g,\beta)\rangle^{RT}_\mathcal{C} = \langle (\Sigma_g,\beta')\rangle^{RT}_\mathcal{C},   
    \end{split}
    \end{equation}
where $n=\min\{d(K_{\alpha},K_{\beta'}),1\}$. Moreover, if $m = \max \{\|I_T(\alpha)\|,\|I_T(\beta)\|\}$, then $I_T({s})$ and $I_T({{\beta'}})$ will have complexity $Nk c_2^{O(k^{c_1})}$ and in similar time for any choice of Vafa's constant $N\in \Z^+$, where $c_1=\log 3$ and $c_2=O(gm|T|k)$.
\end{proposition}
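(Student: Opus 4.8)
The plan is to build the curve $s$ out of the two ``far'' curves produced by Proposition~\ref{prop: launching from diskbusting}, modifying $\beta$ only through Dehn twists whose exponent is a multiple of the Vafa constant $N$, so that Theorem~\ref{th: vafa} keeps the Reshetikhin--Turaev invariant unchanged. First I would run Proposition~\ref{prop: launching from diskbusting} with parameter $k$ on the minimal system $\alpha$ and on the minimal system $\beta$, obtaining curves $s_\alpha$ and $s_\beta$, both normal with respect to the ambient triangulation $T$, with $d(K_\alpha,s_\alpha)>k$ and $d(K_\beta,s_\beta)>k$, each of complexity $C:=O((gm|T|)^{81k^{1.6}}(4k)^{5k^{1.6}})$ and computable in time $O(C\log k)$. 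Since there need not exist a single curve far from both $K_\alpha$ and $K_\beta$, one cannot simply merge $s_\alpha$ and $s_\beta$; instead I will take $s=s_\alpha$ throughout and, if $s_\alpha$ fails to diskbust $V_\beta$, replace $\beta$ by a mildly twisted copy that $s_\alpha$ is forced to diskbust.

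Concretely, I would test whether $s_\alpha$ is diskbusting for $V_\beta$, i.e.\ whether $d(K_\beta,s_\alpha)\ge 2$; this is decidable in polynomial time by a Whitehead-type diskbusting test (cf.\ Theorem~\ref{th: diskbusting}). If it succeeds, output $s=s_\alpha$ and $\beta'=\beta$: then $d(K_\alpha,s)>k\ge 2$ and $d(K_\beta,s)\ge 2$ give $\min_{i=\alpha,\beta'}d(K_i,s)\ge 2$ and $\max_{i=\alpha,\beta'}d(K_i,s)\ge k$, while $n=\min\{d(K_\alpha,K_\beta),1\}=0$ and $d(K_\alpha,s)+d(K_{\beta'},s)-2\ge k>0=n$; the invariant is unchanged trivially. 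If the test fails, then $d(K_\beta,s_\alpha)<2$, and I set $\beta'=\tau^{p}_{s_\beta}(\beta)$ with $p$ the least multiple of $N$ that is at least $7$, so $p=O(N)$. This $\beta'$ is computed by Theorem~\ref{th: normal coordinates}, is again a full minimal system normal to $T$ (Dehn twists are homeomorphisms), and satisfies $\langle(\Sigma_g,\beta)\rangle^{RT}_{\mathcal C}=\langle(\Sigma_g,\beta')\rangle^{RT}_{\mathcal C}$ by Theorem~\ref{th: vafa} since $p$ is a multiple of $N$. Applying Theorem~\ref{th: yoshi} with its two systems both specialized to $\beta$ and twisting curve $s_\beta$ (so the theorem's parameter is $\max\{1,d(K_\beta,K_\beta)\}=1$, its $d_i$ equal $d(K_\beta,s_\beta)>k\ge 4$, and $2d(K_\beta,s_\beta)-2>1$), with the free exponent $p-3\ge 4$ one gets $d(K_\beta,K_{\beta'})\ge\min(p-3,\,2d(K_\beta,s_\beta)-2)\ge\min(4,2k-2)=4$, using $k\ge 4$. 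Since $d(K_\beta,s_\alpha)<2$ and $d(K_\beta,K_{\beta'})\ge 4$, Lemma~\ref{lm: s is close, becomes big} (with $\gamma=\beta$, $\gamma'=\beta'$) gives $d(K_{\beta'},s_\alpha)\ge 2$. Output $s=s_\alpha$ and this $\beta'$: then $d(K_\alpha,s)>k$ and $d(K_{\beta'},s)\ge 2$ give $\min_{i=\alpha,\beta'}d(K_i,s)\ge 2$, $\max_{i=\alpha,\beta'}d(K_i,s)\ge k$, and $d(K_\alpha,s)+d(K_{\beta'},s)-2\ge k>1\ge n$, and the invariant was preserved above. In both cases~\eqref{eq: property} holds.

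For the running time and output size, the only work beyond computing $s_\alpha,s_\beta$ and the diskbusting test is the $p$-th power of a single Dehn twist along $s_\beta$, which by Theorem~\ref{th: normal coordinates} takes time $O(p\,C^{3})=O(N C^{3})$ and produces an intersection word of size $O(N C^{3})$; since $C^{3}=O((gm|T|)^{243k^{1.6}}(4k)^{15k^{1.6}})$ --- the exponents triple because of the cubing in Theorem~\ref{th: normal coordinates} --- and absorbing the $\log k$ factor and the polynomial-in-$C$ cost of the test, the total is $O(Nk\,(gm|T|)^{243k^{1.6}}(4k)^{15k^{1.6}})$, as claimed. I expect the main obstacle to be exactly this middle case: the absence of a single curve far from both $K_\alpha$ and $K_\beta$ forces the detour through a twisted $\beta'$, and the twisting exponent must be simultaneously divisible by $N$ (for Theorem~\ref{th: vafa}) and of size $O(N)$ (for the complexity bound), while the diskbusting property of $s_\alpha$ for $V_{\beta'}$ is obtained only indirectly, via Lemma~\ref{lm: s is close, becomes big} rather than by computing $d(K_{\beta'},s_\alpha)$. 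A minor technical chore is tracking triangulations so that $s_\alpha$, $s_\beta$, $\beta$, and $\beta'$ are all normal with respect to $T$, which the cited statements already guarantee.
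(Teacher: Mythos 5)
Your overall architecture is the same as the paper's: produce $s_\alpha$, $s_\beta$ via Proposition~\ref{prop: launching from diskbusting}, split on whether $s_\alpha$ diskbusts $V_\beta$, and in the bad case twist $\beta$ along $s_\beta$ by a multiple of $N$ so that Theorem~\ref{th: yoshi}, Lemma~\ref{lm: s is close, becomes big} and Theorem~\ref{th: vafa} combine to give the conclusion. However, there is one genuine gap: you assume that ``$s_\alpha$ is diskbusting for $V_\beta$'' is \emph{decidable in polynomial time} via a Whitehead-type test. The paper provides no such algorithm, and Theorem~\ref{th: diskbusting} is only a characterization --- it asserts the existence of \emph{some} pants decomposition equivalent to $\gamma$ for which $s$ is seamed, which is not a condition one can naively check in polynomial time in the stated complexity measures. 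The paper deliberately routes around this: its proof is a case analysis showing that one of a short list of explicitly computable pairs $(s,\beta')$ satisfies \eqref{eq: property}, without ever deciding which case holds; this is exactly why Theorem~\ref{th: properties} outputs \emph{three} Heegaard diagrams, one of which is guaranteed to work. You should either supply (and complexity-analyze) the diskbusting test you invoke, or restructure your argument to output all candidate pairs, as the paper does.

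A secondary point: you collapse the paper's three cases into two by twisting $\beta$ with a small exponent $p=O(N)$ rather than $N(k+3)$, which suffices to get $d(K_\beta,K_{\beta'})\ge 4$ and hence $d(K_{\beta'},s_\alpha)\ge 2$ via Lemma~\ref{lm: s is close, becomes big}. This is fine for the third condition of \eqref{eq: property} \emph{as literally stated}, since $n=\min\{d(K_\alpha,K_{\beta'}),1\}\le 1$. But Theorem~\ref{th: yoshi} and the proof of Theorem~\ref{th: properties} actually need $n=\max\{1,d(K_\alpha,K_{\beta'})\}$, and the paper's middle case (taking $s$ to be a component $\widetilde b$ of $\widetilde\beta$ with $\beta'=\beta$, versus $s=s_\alpha$ with $\beta'=\widetilde\beta$ when $\widetilde b$ fails to diskbust $V_\alpha$) exists precisely to guarantee $d(K_\alpha,K_{\beta'})<2$ in the final case. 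Your streamlined version gives no upper bound on $d(K_\alpha,K_{\beta'})$, so it would not support the downstream application; you should retain the paper's intermediate candidate.
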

\begin{proof}
We use Proposition \ref{prop: launching from diskbusting} to find two curves, $s_\alpha$ and $s_\beta$, such that $d(K_\alpha,s_\alpha)$ and $d(K_\beta,s_\beta)$ are larger than $k$. Note that if $s_\alpha$ is diskbusting in $K_{\beta}$, we are done, as we can let $s=s_\alpha$, $\beta'=\beta$, and
    \begin{equation*}
        d(K_\alpha, s)\geq k,\; d(K_{\beta'},s)=d(K_\beta,s_\alpha)\geq 2, \text{ and } d(K_\alpha,s)+d(K_{\beta'},s)-2>1.
    \end{equation*}
Therefore, assume $d(K_\beta,s_\alpha)<2$. 
    
By applying Theorem \ref{th: normal coordinates}, we compute an intersection word for $\widetilde{\beta}=\tau_{s_\beta}^{N(k+3)}(\beta)$. Note that $\|I_{T}(\widetilde{\beta})\|= O(Nk\|I_T(s_\beta)\|^3)= O(Nk(gm|T|)^{243k^{c_1}}(4k)^{15k^{c_1}})$ and, by Theorem \ref{th: yoshi}, $d(K_\beta,K_{\beta'})\geq k$. Moreover, because $\beta'$ was constructed by applying a power of Dehn twists multiple of $N$ to $\beta$, by Theorem \ref{th: vafa}, $\langle (\Sigma_g,\beta)\rangle^{RT}_\mathcal{C} = \langle (\Sigma_g,\beta')\rangle^{RT}_\mathcal{C}$.

    Pick any component $\widetilde{b}$ of ${\widetilde{\beta}}$. If $\widetilde{b}$ is diskbusting for $K_\alpha$, then $s=\widetilde{b}$ and $\beta'=\beta$ have the desired properties. In particular, notice that
    \begin{equation*}
        d(K_{\alpha},s)\geq 2, \;\; d(K_{\beta'},s)\geq d(K_{\beta},K_{\widetilde{\beta}})\geq k, \text{ and } d(K_{\alpha},s)+d(K_{\beta'},s)-2> 1.
    \end{equation*}
    If, however, $\widetilde{b}$ is not diskbusting for $K_\alpha$, it means that $d(K_{\alpha},K_{\widetilde{\beta}})\leq d(K_\alpha, \widetilde{b})<2$. Letting $s=s_\alpha$ and $\beta'=\widetilde{\beta}$ gives
    \begin{equation*}
        d(K_{\alpha},s)\geq k,\;\;d(K_{\beta'},s)\geq 2, \text{ and }d(K_{\alpha},s)+d(K_{\beta'},s)-2 \geq k\geq 3 > d(K_{\alpha},K_{\beta'}),
    \end{equation*}
    where the second inequality comes from $d(K_\beta,s_\alpha)< 2$ and $d(K_\beta,K_{\widetilde{\beta}})\geq k\geq 4$ applied to Lemma \ref{lm: s is close, becomes big}.
\end{proof}
Finally, we may once more combine the above result with Theorem \ref{th: yoshi} to prove our main reduction.

\begin{customthm}{1}\label{th: properties}
    Let $(\Sigma_g,\alpha,\beta)$ be a Heegaard diagram of a closed 3-manifold $M$ of complexity $m$ with respect to a triangulation $T$ of $\Sigma_g$. 
    Choose a modular category $\mathcal{C}$ with Vafa's constant $N$ and fix an integer $k\geq 4$.
    Then there is a set of three Heegaard diagrams, computed in time $O_k(\poly(g,m,|T|, N))$ (of degree depending on $k$), representing manifolds with RT invariant over $\mathcal{C}$ equal to $\langle M\rangle^{RT}_{\mathcal{C}}$, one of them guaranteed to be hyperbolic and with no embedded orientable and incompressible surface of genus at most $2k$.
\end{customthm}
\begin{proof}
We start by using Lemma \ref{lm: stabilization} to compute a stabilized splitting $(\Sigma_{g+1},\alpha',\beta')$; note that $d(K_{\alpha'},K_{\beta'})=0$. 
Construct two curves $s_\alpha$ and $s_\beta$ as in the proof of Proposition \ref{prop: get large distance from s_alpha and s_beta}. 
Let $\beta_1=\beta'$ and $s_1=s_\alpha$.

We proceed as in the proof of Proposition \ref{prop: get large distance from s_alpha and s_beta}, computing an intersection word for a new minimal system $\widetilde{\beta}=\tau^{N(k+3)}_{s_\beta}(\beta')$ and defining $\beta_2=\beta'$ and $\beta_3= \widetilde{\beta}$, $s_2$ as any component of $\widetilde{\beta}$, and $s_3=s_\alpha$.
By Proposition \ref{prop: launching from diskbusting}, for at least one $1\leq j \leq 3$
\begin{equation*}
  \min_{i=\alpha',\beta^j}\{d(K_i,s_j)\}\geq 2,\max_{i=\alpha',\beta^j}\{d(K_i,s_j)\}\geq k, d(K_{\alpha'},s_j)+d(K_{\beta_j},s_j)-2>n  
\end{equation*}
where $n = \max\{d(K_{\alpha'},K_{\beta_j}),1\}$.
Applying, once again, Theorems \ref{th: normal coordinates} and \ref{th: yoshi}, we can conclude that one of the splittings $(\Sigma_g,\alpha',\tau_{s_j}^{N(k+3)}(\beta_j))$ has a Hempel distance of, at least, $k$ and an intersection word for $\tau_{s_j}^{N(k+3)}(\beta_j)$ can be computed in time $$O(Nk(\max\{\|I_T(\beta_j)\|,\|I_T(s_j)\|\})^3)=O(N^4k^4(gm|T|)^{729k^{c_1}}(4k)^{45k^{c_1}}).$$ Theorem \ref{th: hempel} finishes the proof. 
\end{proof}

\begin{remark}
For analysis and algorithmic purposes, it is convenient to explicitly represent the metric in a hyperbolic 3-manifold, notably using a solution to {\em Thurston's gluing equations} in a triangulation~\cite[Chapter E.6]{BenedettiPetronioHyperbolicGeom}. It is however a difficult problem in general to prove the existence and actually build a {\em geometric} triangulation of a hyperbolic 3-manifold, i.e., admitting such solution. As of now, our reduction does not provide such a metric explicitly. 
\end{remark}

\section{Computational reduction for quantum invariants}\label{sec: computational consequences}
Theorem \ref{th: properties} gives a polynomial time algorithm to change a general closed 3-manifold into another manifold with very restricted topology without altering the RT invariant in the process. 
Therefore, the problems of either exactly computing or approximating the invariant of general 3-manifolds \emph{reduce}, in a Cook-Turing sense, to the problems of exactly computing or approximating the invariant when the manifolds are assumed to have the properties of Theorem \ref{th: properties}. 
Ultimately, this means that the \emph{hardness of computation} is not altered in this restricted topology scenario.

We illustrate this reduction by showing that \emph{value-distinguishing} approximations of the Reshetikhin–Turaev and the Turaev-Viro invariants are \#\texttt{P}-hard, even for manifolds with the properties of Theorem \ref{th: properties}, when we take $\mathcal{C}$ to be the category of representations of the quantum group $SO_r(3)$, for some prime $r\geq 5$ (in this case, one can use $N=4r$ for Vafa's constant).
We note that, by a value-distinguishing approximation, we mean the ability to determine whether the approximated quantity $c\in \mathbb{R}^+$ is $a>c$ or $b<c$ for any fixed $0<a<b$ where we assume, as a premise, that one of the two to hold. 
In particular, multiplicative approximations are value-distinguishing, although other less restrictive schemes also are \cite{kuperberg2009hard}.

\subsection{Reshetikhin–Turaev invariant}
For every Heegaard diagram $(\Sigma_g,\alpha,\beta)$, there exists an orientation-preserving homeomorphism $\phi:\Sigma_g\to \Sigma_g$ such that $\beta=\phi(\alpha)$, provided that $\alpha$ and $\beta$ are minimal \cite{farb2011primer, hatcher1980presentation}. 
Although not unique, this map is well-defined in $\Mod(\Sigma_g)$, so can be combinatorially described by a word on Lickorish generators.
Here and throughout, we do not differentiate the notation of the homomorphism $\phi$ from its equivalence class in $\Mod(\Sigma_g)$.

\begin{theorem}[\cite{alagic2014quantum, freedmanLW2002topological}]\label{th: alagic}
    Consider the problem $\mathcal{P}$ of determining a value-distinguishing approximation of the $SO_r(3)$-RT invariant, $r\geq 5$ prime, of a manifold $M$, represented through a Heegaard splitting described by a word $\phi\in \Mod(\Sigma_g)$ for some known $g\geq 2$. Then $\mathcal{P}$ is \#\texttt{P}-hard in the sense of a Cook-Turing reduction.
\end{theorem}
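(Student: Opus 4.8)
The plan is to reduce $\mathcal{P}$ to the quantum-computational universality of the $SO_r(3)$ Reshetikhin-Turaev TQFT and then to the \#\texttt{P}-hardness of estimating output amplitudes of polynomial-size quantum circuits; the statement is essentially Theorem~1.2 of \cite{alagic2014quantum} (building on \cite{freedmanLW2002topological}), so the work is to assemble those ingredients for the Heegaard-word presentation and to check that a constant-gap (value-distinguishing) approximation already suffices. First I would record that the RT invariant of a manifold presented by a Heegaard word is a fixed diagonal matrix coefficient of the projective mapping class group representation carried by $\mathcal{C}$: writing $\rho_{g,\mathcal{C}}\colon \Mod(\Sigma_g)\to \mathrm{PU}(V_{g,\mathcal{C}})$ for that representation and $v_0\in V_{g,\mathcal{C}}$ for the vector the TQFT assigns to the standard genus-$g$ handlebody, one has, up to a scalar computable from $\phi$ in polynomial time, $\langle M_\phi\rangle^{RT}_{\mathcal{C}}=\langle v_0,\rho_{g,\mathcal{C}}(\phi)\,v_0\rangle$, where $M_\phi$ is glued from two copies of that handlebody along $\phi$. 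Since the input is already a word in the $3g-1$ Lickorish generators, evaluating the invariant is exactly evaluating such a matrix coefficient, and conversely any unitary realized by a short generator word gives a manifold whose invariant we control.

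Next I would invoke universality of this family of representations for $r\ge 5$ prime. By the density theorems of Freedman-Larsen-Wang and Larsen-Wang \cite{freedmanLW2002topological, larsen2005density}, the images $\rho_{g,\mathcal{C}}(\Mod(\Sigma_g))$ are dense in projective special unitary groups of dimension exponential in $g$, and this density is effective and \emph{local} --- localized Lickorish words realize an encoded universal gate set --- so that combining them with the Solovay-Kitaev theorem yields a polynomial-time compiler: given a quantum circuit $C$ on $n$ qubits with $\poly(n)$ gates and a precision $\varepsilon>0$, it outputs a genus $g=\poly(n)$ and a Lickorish word $\phi_C$ of length $\poly(n,\log(1/\varepsilon))$ such that, after the standard identification of a $2^n$-dimensional subspace of $V_{g,\mathcal{C}}$ with the $n$-qubit space sending $v_0$ to $|0^n\rangle$ (and extending $C$ by the identity off this subspace),
\begin{equation*}
\bigl\lvert\,\langle M_{\phi_C}\rangle^{RT}_{\mathcal{C}}-\langle 0^n|C|0^n\rangle\,\bigr\rvert\ \leq\ \varepsilon .
\end{equation*}
This is the $3$-manifold / mapping-class-group incarnation of the universality used in \cite{freedmanLW2002topological, alagic2014quantum}.

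The third step closes the loop with the complexity of amplitude estimation. Standard constructions encode a \#\texttt{P}-complete quantity into a quantum amplitude: given a Boolean formula $F$, one builds in polynomial time a circuit whose output amplitude is a simple function of $\#F$. By itself this is not enough, since $\#F/2^n$ for different $F$ may lie arbitrarily close together while the oracle only separates values in a fixed window; the resolution is an amplification argument --- amplitude amplification inside the circuit, together with the connected-sum multiplicativity $\langle M\# N\rangle^{RT}_{\mathcal{C}}=\langle M\rangle^{RT}_{\mathcal{C}}\,\langle N\rangle^{RT}_{\mathcal{C}}$ and $\langle S^3\rangle^{RT}_{\mathcal{C}}=1$ of the RT invariant --- that widens the relevant gaps into fixed constants and, through the identity $\mathrm{PostBQP}=\mathrm{PP}$, extracts $\#F$ from polynomially many decisions, each of the form ``is a certain amplitude above $b$ or below $a$''. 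Routing the corresponding circuits through the compiler of the previous step with $\varepsilon$ a suitable constant turns each such decision into one value-distinguishing query for $\mathcal{P}$ with the prescribed thresholds $0<a<b$; hence a polynomial-time solver for $\mathcal{P}$ would compute $\#F$, so $\mathcal{P}$ is \#\texttt{P}-hard under a Cook-Turing reduction.

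I expect the main obstacle to be exactly this last step: bridging the gap between a \emph{constant-gap} approximation and genuine \#\texttt{P}-hardness, since bare universality only delivers $\mathrm{BQP}$-style bounded-error hardness. The delicate part is engineering the amplification so that all the quantities being compared stay inside the fixed window $(a,b)$ --- using connected-sum powers of the manifolds to rescale, and postselection and amplitude amplification inside the circuits to widen exponentially small separations --- while keeping every produced Heegaard word of polynomial size. A secondary technical point, which I would import from \cite{alagic2014quantum, freedmanLW2002topological}, is the effectivity and locality of the density (so that $\phi_C$ stays polynomially long) together with the bookkeeping that pins the handlebody vector $v_0$ to the encoded $|0^n\rangle$, so that $\langle M_{\phi_C}\rangle^{RT}_{\mathcal{C}}$ genuinely computes the intended amplitude rather than some scrambled matrix element.
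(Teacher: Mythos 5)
The paper does not prove this statement: it is imported verbatim from \cite{alagic2014quantum, freedmanLW2002topological} (the label and citation make clear it is quoted, not re-derived). Your reconstruction follows exactly the route of those sources --- the RT invariant of a Heegaard-word manifold as a matrix coefficient $\langle v_0,\rho_{g,\mathcal{C}}(\phi)v_0\rangle$, density of the $SO_r(3)$ representations for prime $r\ge 5$ \`a la Freedman--Larsen--Wang together with Solovay--Kitaev compilation of Lickorish words, and then Kuperberg's postselection argument (via $\mathrm{PostBQP}=\mathrm{PP}$ and $\mathrm{P}^{\mathrm{PP}}=\mathrm{P}^{\#\mathtt{P}}$) to upgrade a constant-gap value-distinguishing oracle to $\#\mathtt{P}$-hardness --- so it is the intended justification. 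The only point I would flag is that the last step in \cite{kuperberg2009hard, alagic2014quantum} rests on adaptive use of the value-distinguishing oracle to simulate postselected circuits rather than on amplitude amplification plus connected sums per se; your sketch gestures at both, and the postselection route is the one that actually closes the argument.
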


Before applying Theorem \ref{th: properties} to this result, we need to find an algorithm to convert the pair $(\Sigma_g,\phi)$ into a proper Heegaard diagram $(\Sigma_g,\beta)$. 
This cannot be done through brute force computing $\beta=\phi(\alpha)$, since, as we saw in the proof of Proposition \ref{prop: launching from diskbusting}, it can lead to exponential bottlenecks.
We fix the problem with the following lemma, at the cost of potentially increasing the value of $g$. 

\begin{lemma}\label{lm: word to diagram}
    Consider a Heegaard splitting described by a word $\phi\in \Mod(\Sigma_g)$ for some known $g\geq 2$. Then it is possible to compute, in time $O(\poly(g,\phi))$, a Heegaard diagram $(\Sigma_{g'},\beta)$ representing the same manifold, with $\beta$ normal to a triangulation $T$ of $\Sigma_{g'}$.
\end{lemma}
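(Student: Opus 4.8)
The plan is to realize the map $\phi = \tau^{n_r}_{s_r}\circ\cdots\circ\tau^{n_1}_{s_1}$ (a word on Lickorish generators) not by brute-force computing $\phi(\alpha)$ for a standard minimal system $\alpha$ of $\Sigma_g$ — which, as noted in the proof of Proposition~\ref{prop: launching from diskbusting}, leads to exponential blow-up in the complexity of the intersection words because each Dehn twist can triple the complexity — but by a \emph{stabilization trick}. First I would fix, once and for all, a triangulation $T_0$ of $\Sigma_g$ of constant size $O(g)$ together with a standard minimal system $\alpha$ lying on its edges; the Lickorish curves $s_1,\dots,s_r$ can likewise be taken to lie on a fixed refinement of $T_0$, since there are only $3g-1$ of them and they depend only on $g$. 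The key observation is that a single Dehn twist $\tau_{s_i}^{n_i}$ applied to a diagram $(\Sigma_h,\beta)$ can be \emph{traded} for a bounded number of stabilizations plus twists of \emph{bounded power}, using the standard fact (the "chain-relation" / handle-slide identity in the theory of Heegaard diagrams) that stabilizing and then performing handle slides lets one rewrite $\tau_{s_i}^{n_i}(\beta)$ as a new diagram whose complexity grows only additively in $|n_i|$ rather than multiplicatively. Concretely: introduce a new handle with its pair of dual curves $c,c'$ ($i(c,c')=1$) as in the definition of stabilization; inside that handle one can perform $|n_i|$ elementary moves, each of constant cost, so that after destabilizing back down the net effect on the original surface is $\tau_{s_i}^{n_i}$, while the intermediate complexity stayed $O(|n_i| + \text{current complexity})$.

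The steps, in order, would be: (1) initialize $(\Sigma_g,\alpha)$ on a fixed triangulation $T_0$ with $|T_0| = O(g)$ and $\|I_{T_0}(\alpha)\| = O(g)$; (2) process the generators of $\phi$ from $i=1$ to $i=r$: at step $i$, apply Lemma~\ref{lm: stabilization} to add one handle (updating the triangulation by Proposition~\ref{prop: normal to triangulation} and Theorem~\ref{th: normal coordinates} only for bounded-power twists), realize $\tau_{s_i}^{n_i}$ through $O(|n_i|)$ bounded-cost moves in the new handle, then destabilize; each such step costs $O(\poly(|n_i|, h, |T|))$ and increases $\|I_T(\beta)\|$ and $|T|$ by only an additive $O(|n_i| + \text{poly}(g))$ term; (3) after all $r$ generators are processed, the accumulated diagram $(\Sigma_{g'},\beta)$ — with $g'$ possibly larger than $g$ but bounded by $g + O(r)$, and with $\beta$ normal to the final triangulation $T$ by construction (no cyclic reductions arise from the elementary moves, exactly as in the end of the proof of Theorem~\ref{th: get diskbusting}) — has total complexity and construction time $O(\poly(g, |\phi|))$, where $|\phi| = \sum_i(\log|n_i| + 1) + r$ is the encoding length of the word. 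If one instead only needs $g' = g$, one destabilizes fully at the end; the lemma as stated allows $g'\ne g$ precisely to absorb any handles that cannot be cleanly removed, which is harmless for the application since Theorem~\ref{th: properties} works for any genus.

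The main obstacle is (2): making precise the claim that a Dehn twist of arbitrary power $\tau_{s_i}^{n_i}$ can be effected, after one stabilization, by $O(|n_i|)$ \emph{constant-complexity} local moves rather than by genuinely iterating a complexity-tripling operation $|n_i|$ times. This is where one must invoke a concrete normal-form result for Heegaard diagrams under stabilization (handle slides preserve the 3-manifold, and sliding the $i$-th $\beta$-curve over the new handle $n_i$ times realizes the twist); the bookkeeping must confirm that each slide touches only $O(1)$ triangles near $s_i$ and the new handle, so the per-move cost is independent of the current diagram size and the total additive growth at step $i$ is $O(|n_i|)$. A secondary technical point is verifying that after all destabilizations the curve $\beta$ is still a bona fide full system normal to $T$ — this follows because stabilization/destabilization moves are local and the constructed arcs are shortest paths, so no bigon (substring $ee^{-1}$) is created, mirroring the normality argument at the end of the proof of Theorem~\ref{th: get diskbusting}.
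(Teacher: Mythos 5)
Your route is entirely different from the paper's, and it has a genuine gap at its central step. The paper does not attempt to realize $\phi$ curve-by-curve at all: it first converts $(\Sigma_g,\phi)$ into a \emph{triangulation of the 3-manifold} with $O(g|\phi|)$ tetrahedra (citing known constructions that layer the twists), then extracts a Heegaard diagram $(\Sigma_{g'},\beta)$ from the constructive proof of existence of splittings from a triangulation, with $\beta$ given as an edge list in the induced surface triangulation, and finally applies Proposition~\ref{prop: triangulation to normal}. The genus increase $g'\gg g$ is exactly what absorbs the complexity of $\phi$.

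The gap in your plan is step (2), which you yourself flag as ``the main obstacle'' but never close. First, the ``standard fact'' you invoke does not exist in the form you need: handle slides of the $\beta$-curves preserve the handlebody $V_\beta$ (they produce an equivalent system in the sense of Definition~\ref{def: disk graph}), whereas $\tau_{s_i}^{n_i}(\beta)$ in general determines a \emph{different} handlebody, so no sequence of slides --- with or without one stabilization --- can have $\tau_{s_i}^{n_i}$ as its ``net effect.'' Second, the destabilization step is impossible as described: after destabilizing back to the original surface, the curve system is literally $\tau_{s_i}^{n_i}(\beta)$, whose geometric intersection number with any fixed triangulation genuinely grows multiplicatively (on the order of $|n_i|\, i(s_i,\beta)\, i(s_i,e)$ per edge $e$), so its uncompressed intersection word cannot have stayed additively small --- the intermediate bookkeeping cannot rescue a final object that is provably large. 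Iterating over the $r$ generators then gives growth exponential in $r$ even when every $|n_i|=1$, which is exactly the bottleneck the lemma is meant to avoid (and note the obstruction is the word length $r$, not just the powers $n_i$: generic products of unit-power twists stretch curves exponentially). If instead you never destabilize, you are implicitly rebuilding something like a layered/sweep-out construction on a genus-$(g+O(r))$ surface, which is essentially the paper's triangulation route --- but then the burden is to specify the local gluing data per twist and prove it yields a bona fide Heegaard diagram of the same manifold, none of which is done here.
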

\begin{proof}
    Given a splitting $(\Sigma_g,\phi)$, a triangulation $\mathcal{T}$ for $M$ (refer to the Section \ref{sec: turaev-viro} for a quick introduction of triangulations of 3-manifolds) can be determined in time $O(g|\phi|)$ and with as many tetrahedra \cite{alagic2011quantum, alagic2014quantum, brinkmann2001computing,koenig2010quantum}.
    The standard proof of the existence of a Heegaard splitting for any closed 3-manifold from its triangulation \cite[Theorem 2.5]{hempel20223} is constructive, and can be made algorithmic to produce a Heegaard diagram $(\Sigma_{g'},\beta)$ of the corresponding 3-manifold, where $\Sigma_{g'}$ is the boundary of the standard embedding of the genus $g'$ handlebody in $\mathbb{R}^3$ represented by a triangulation $\mathcal{T}'$. 
    The 3-manifold triangulation $\mathcal{T}'$ naturally induces a surface triangulation $T$ on $\Sigma_{g'}$.
    By construction, the curves $\beta$ are represented as an edge list in $T$, so to make them normal, we use Proposition \ref{prop: triangulation to normal}.
\end{proof}

\begin{corollary}\label{col: RT}
     Fix a prime $r\geq 5$. Consider the problem $\mathcal{P}$ of, given a Heegaard diagram $(\Sigma_g,\beta)$ of a closed 3-manifold $M$, returning a value-distinguishing approximation of its $SO_r(3)$-RT invariant if $M$ has the properties of Theorem \ref{th: properties}, otherwise remaining silent. Then $\mathcal{P}$ is \#\texttt{P}-hard in the sense of a Cook-Turing reduction. 
\end{corollary}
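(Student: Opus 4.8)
The plan is to reduce the problem $\mathcal{P}$ of Theorem~\ref{th: alagic} to the problem $\mathcal{P}'$ stated in Corollary~\ref{col: RT} in the sense of a Cook--Turing reduction. Suppose we had an oracle for $\mathcal{P}'$; I want to show we can solve $\mathcal{P}$ in (Cook--Turing) polynomial time, which by Theorem~\ref{th: alagic} establishes the $\#\texttt{P}$-hardness of $\mathcal{P}'$. Given an input $(\Sigma_g,\phi)$ to $\mathcal{P}$, I would first apply Lemma~\ref{lm: word to diagram} to obtain, in time $O(\poly(g,|\phi|))$, an honest Heegaard diagram $(\Sigma_{g'},\beta)$ of the same 3-manifold $M$ with $\beta$ normal to some triangulation $T$ of $\Sigma_{g'}$; since the RT invariant depends only on the 3-manifold, $\langle(\Sigma_{g'},\beta)\rangle^{RT}_{SO_r(3)} = \langle M\rangle^{RT}_{SO_r(3)}$.

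Next I would run the algorithm of Theorem~\ref{th: properties} on this diagram, taking Vafa's constant $N = 4r$ and, say, $k = 4$ (any fixed $k\geq 4$ works and fixes the polynomial degree once and for all). This produces, in time $O(\poly(g',m,|T|,N)) = O(\poly(g,|\phi|))$, a set of \emph{three} Heegaard diagrams $D_1, D_2, D_3$, each representing a closed 3-manifold $M_j$ with $\langle M_j\rangle^{RT}_{SO_r(3)} = \langle M\rangle^{RT}_{SO_r(3)}$, and with the guarantee that \emph{at least one} of the $M_j$ is hyperbolic, strongly irreducible, and has no embedded incompressible orientable surface of genus at most $2k = 8$ --- i.e.\ at least one $M_j$ satisfies the topological hypotheses of Corollary~\ref{col: RT}. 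Now I feed each $D_j$ in turn to the oracle for $\mathcal{P}'$. For those $D_j$ whose underlying manifold has the required properties the oracle returns a value-distinguishing approximation of $\langle M_j\rangle^{RT}_{SO_r(3)} = \langle M\rangle^{RT}_{SO_r(3)}$; for the others it may remain silent (or return garbage, but under the ``remain silent'' convention we simply ignore non-answers). Since at least one of the three calls is guaranteed to be informative and all three manifolds share the invariant with $M$, collecting any returned answer gives a value-distinguishing approximation of $\langle M\rangle^{RT}_{SO_r(3)}$, which is exactly what $\mathcal{P}$ asks for. The whole procedure makes three oracle calls plus polynomial-time pre- and post-processing, so it is a legitimate Cook--Turing reduction.

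The only genuinely delicate point is the bookkeeping about \emph{which} of the three diagrams is the good one: Theorem~\ref{th: properties} only promises that \emph{some} $M_j$ has the restricted topology, not which. This is why the ``remain silent otherwise'' clause in the statement of $\mathcal{P}'$ is essential --- it lets us query all three candidates and harvest whatever valid answer comes back without needing to certify hyperbolicity ourselves (which would be a hard problem in its own right). One should also double-check that $k = 4$ (hence genus bound $2k = 8$) is what one wants, or more honestly state the corollary for an arbitrary fixed $k$; and that the polynomial running time of Theorem~\ref{th: properties}, whose degree depends on $k$, is acceptable --- it is, since $k$ is a fixed constant. No obstacle beyond this organizational care arises: the heavy lifting (building hard restricted-topology instances with the same quantum invariant) has already been done in Theorem~\ref{th: properties} and the $\#\texttt{P}$-hardness input is supplied by Theorem~\ref{th: alagic}, so the corollary is essentially an assembly of these two facts together with Lemma~\ref{lm: word to diagram}.
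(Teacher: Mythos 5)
Your proposal is correct and follows essentially the same route as the paper: convert $(\Sigma_g,\phi)$ to a Heegaard diagram via Lemma~\ref{lm: word to diagram}, run the algorithm of Theorem~\ref{th: properties} to get three diagrams sharing the invariant with at least one satisfying the topological restrictions, and query the oracle on all three, harvesting whichever answer it returns. The only (immaterial) differences are that you swap the labels $\mathcal{P}$ and $\mathcal{P}'$ relative to the paper and spell out more explicitly why the ``remain silent'' clause makes the three parallel oracle calls sound.
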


\begin{proof}
    Let $\mathcal{O}$ be an oracle machine that solves $\mathcal{P}$ and consider the problem $\mathcal{P}'$ of finding a value-distinguishing approximation of a {general Heegaard splitting} given as a pair $(\Sigma_g,\phi)$ for $\phi \in \Mod(\Sigma_g)$. 
    We will show that $\mathcal{O}$ solves $P'$ with only a polynomial overhead. 
    Because $\mathcal{O}$ solves $\mathcal{P}$ and, by Theorem \ref{th: alagic}, $\mathcal{P}'$ is \#\texttt{P}-hard, then so is $\mathcal{P}$.
    
    Let $(\Sigma_g,\phi)$ encode a Heegaard splitting of a manifold $M$, not necessarily with the properties of Theorem \ref{th: properties}. 
    Using Lemma \ref{lm: word to diagram}, we transform the data $(\Sigma_g,\phi)$ into a Heegaard diagram $(\Sigma_{g'},\beta)$ in polynomial time. 
    Then apply the algorithm of Theorem \ref{th: properties} to this diagram, returning three new diagrams as output.
    We run the oracle $\mathcal{O}$ in parallel to these three diagrams, stopping the program whenever the oracle halts for one of them.
    In the end, this gives, in polynomial time, a value-distinguishing approximation of $\langle M \rangle^{RT}_{SO_r(3)}$, concluding the proof.
\end{proof}
\begin{remark}
Reshetikhin–Turaev's original presentation of the invariant \cite{turaev2010quantum}, instead of computed from a Heegaard splitting, is combinatorially defined through the so-called \textit{surgery presentation of 3-manifolds}, in which every framed link in $S^3$ is associated with a closed manifold $M$. 
Careful analysis of Lickorish's \cite{lickorish1962representation} original proof of the existence of the surgery presentation indicates that, given a pair $(\Sigma_g,\phi)$, $\phi\in \Mod(\Sigma_g)$, one can compute, in polynomial time in $|\phi|$, a surgery presentation of the 3-manifold. 
This implies that the complexity of approximating the RT-invariant of a general 3-manifold taking $(\Sigma_g,\phi)$ as input carries over to the problem of approximating the invariant taking a framed link as input.
To the best of our knowledge, there is yet no efficient algorithm to transform a Heegaard diagram $(\Sigma_g,\beta)$ into a surgery presentation.
In consequence, the complexity of approximating the RT-invariant of a manifold with the properties of Theorem \ref{th: properties} given by a framed link diagram remains therefore open.
\end{remark}
\begin{remark}
    We note that the sort of reduction assumed by the statement of Corollary \ref{col: RT} is related to what is often called in the complexity literature by a \emph{semi-decision problem}, that is, an oracle that cannot return an incorrect answer, but may not halt for some inputs. Although weaker than the more common approach in which we assume that $\mathcal{P}$ \emph{can} give incorrect approximation of the invariant if the input does not have the expected properties, this sort of oracle has also already been discussed for algorithm on 3-manifolds, e.g. see the Definition 1.3 in \cite{manning2002algorithmic}.
\end{remark}

\subsection{Turaev-Viro invariant}\label{sec: turaev-viro}
A compact 3-manifold can also be combinatorially described by a set of tetrahedra $\mathcal{T}$, together with rules on how to glue their triangular faces \cite{bing1959alternative, moise1952affine}. 
This description is called a \textit{triangulation of the 3-manifold}, but it should not be confused with triangulations of surfaces. 
Nonetheless, if $M$ has a boundary, $\mathcal{T}$ naturally defines a (surface) triangulation for $\partial M$. 
We note that for each $g\geq 1$, there exists a one-vertex triangulation of the handlebody of genus $g$ \cite{jaco2006layeredtriangulations3manifolds}. 

The Turaev-Viro invariant (TV) is another quantum invariant for closed 3-manifolds.
It is defined for spherical categories (which include modular categories) and is computed directly from a triangulation \cite{barrett1996invariants}. 
The Turaev-Walker theorem \cite{turaev2010quantum} states that, given a manifold $M$, $|\langle M \rangle_{\mathcal{C}}^{RT}|^2=\langle M \rangle_{\mathcal{C}}^{TV}$, provided $\mathcal{C}$ is a modular category.
An approximation of the $SO_r(3)$-TV invariant can be used to compute an approximation of $SO_r(3)$-RT \cite{alagic2014quantum}, as long as there is a polynomial time algorithm to transform a Heegaard splitting into a triangulation. 
For such, we use the next theorem, whose proof, except for the complexity analysis below, is due to \cite{he2023algorithm}.

\begin{theorem}\label{th: glue handlebody}
    Suppose $\beta$ is a normal minimal system given with respect to a one-vertex triangulation of the standard embedding of the genus $g$ handlebody in $\mathbb{R}^3$, with $\|I_T(\beta)\|=m$. There is a deterministic algorithm to compute, in time $\poly(m,g)$, a triangulation of the 3-manifold of Heegaard diagram $(\Sigma_g,\beta)$.
\end{theorem}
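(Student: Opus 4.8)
The plan is to make algorithmic the classical proof that a Heegaard diagram determines a 3-manifold triangulation, and then to control the number of tetrahedra produced. Recall the topological construction: given $(\Sigma_g,\beta)$ where $\beta=\{b_1,\dots,b_g\}$ is a system of meridians in $\partial V$ for the standard handlebody $V$ of genus $g$ in $\mathbb{R}^3$, the manifold $M$ is obtained by gluing a second handlebody $W$ to $V$ along a homeomorphism $\partial W\to \partial V$ carrying the meridians of $W$ to the curves $\beta$. Equivalently, one attaches a 2-handle to $V$ along each curve $b_i$ and then caps off the resulting $S^2$ boundary with a 3-handle. The proof by He~\cite{he2023algorithm} provides exactly this construction in the triangulated category, and our only task is to assemble the pieces and bound the sizes.

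First I would describe the input normalization. The handlebody $V$ comes with a one-vertex triangulation $\mathcal{T}_V$, which induces a surface triangulation $T$ of $\Sigma_g=\partial V$ with $|T|=O(g)$ (one-vertex triangulations of genus-$g$ handlebodies have $O(g)$ tetrahedra by~\cite{jaco2006layeredtriangulations3manifolds}). The system $\beta$ is normal with respect to $T$ and $\|I_T(\beta)\|=m$. Using Proposition~\ref{prop: normal to triangulation} componentwise (and merging the overlay graphs), compute a triangulation $T'\geq T_\beta$ of $\Sigma_g$ of size $O(m+g)$ in which $\beta$ lies on the edges, i.e.\ each $b_i$ is represented by an edge list. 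Then thicken $T'$ to a triangulation $\mathcal{T}_V'$ of $V$ with $T'$ on the boundary: this is a standard ``layering'' or coning operation. Concretely one can attach a product region $\Sigma_g\times[0,1]$ triangulated with $O(|T|+|T'|)=O(m+g)$ tetrahedra on top of $\mathcal{T}_V$, or invoke the triangulated isotopy / bistellar machinery; either way one obtains $\mathcal{T}_V'$ with $\poly(m,g)$ tetrahedra and $\beta$ sitting on its boundary edges.

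Next comes the 2-handle attachment, which is the technical heart and the step I expect to be the main obstacle. For each curve $b_i$, one must glue a triangulated 2-handle $D^2\times[0,1]$ onto $\partial V$ along an annular neighborhood of $b_i$. The subtlety is that $b_i$ is a union of edges of $\mathcal{T}_V'$ that may traverse many triangles, so the attaching annulus must be triangulated compatibly with the boundary triangulation $T'$ restricted to that annular neighborhood; the number of tetrahedra in the handle is linear in $\|E_{T'}(b_i)\|=O(m+g)$, and a triangulated disk $D^2$ bounding the core $b_i$ requires $O(m+g)$ triangles by a straightforward fan/ear-clipping argument. One must also be careful that the $g$ attaching regions are disjoint, which follows because the $b_i$ are disjoint and one works in disjoint regular neighborhoods. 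After attaching all $g$ handles, the boundary becomes a triangulated $S^2$ with $\poly(m,g)$ triangles; cap it with a single cone (3-handle) over a new apex vertex, adding one tetrahedron per boundary triangle. He's algorithm~\cite{he2023algorithm} carries out precisely this program, so the remaining work is bookkeeping: verify that each stage multiplies the size by at most a polynomial factor, and that each stage runs in time polynomial in the current size, so that the total running time and output size are $\poly(m,g)$.

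Summing up: the pipeline is (i) normalize and re-triangulate so $\beta$ lies on edges of a triangulation $\mathcal{T}_V'$ of the handlebody with $\poly(m,g)$ tetrahedra; (ii) attach a triangulated 2-handle along each $b_i$, each contributing $\poly(m,g)$ tetrahedra; (iii) cone off the resulting triangulated $2$-sphere with a single 3-handle. The output is a triangulation of the closed 3-manifold with Heegaard diagram $(\Sigma_g,\beta)$ and $\poly(m,g)$ tetrahedra, computed in $\poly(m,g)$ time, which is the assertion of the theorem. The delicate point throughout is maintaining explicit polynomial control on the triangulation size across the handle attachments and ensuring the combinatorial gluing data (which triangle of the handle glues to which triangle of $\partial V$) is produced in polynomial time; all of this is handled in~\cite{he2023algorithm}, and our contribution here is only the complexity accounting recorded in Appendix~\ref{app: proofs}.
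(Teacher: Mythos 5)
Your proposal takes essentially the same route as the paper: both defer the correctness of the construction entirely to He's algorithm \cite{he2023algorithm} and treat the theorem as a matter of complexity bookkeeping. The paper's proof is simply a list of He's named subroutines (Proposition 13, \texttt{Petal-resolver}, \texttt{Quad-isolator}, \texttt{Wedge-folder}, \texttt{Ball-filler}) with their running times, so the only thing you assert but do not carry out --- the per-subroutine complexity accounting --- is exactly the content of the paper's proof; your topological narrative of handle attachment is a reasonable gloss on what that algorithm does.
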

\begin{proof}
    The algorithm consists of some subroutines whose correct convergences are proved by \cite{he2023algorithm}. 
    Direct computations give the complexity of each subroutine to be
    \begin{itemize}
        \item Proposition 13: $O(m^2+g^2)$, where we use our Proposition \ref{prop: normal to triangulation} to trace $\beta$;
        \item \texttt{Petal-resolver}: $O(mg)$;
        \item  \texttt{Quad-isolator}: $O(g^2)$
        \item \texttt{Wedge-folder}: $O(g)$;
        \item \texttt{Ball-filler}: $O(g^2)$.
    \end{itemize}
\end{proof}

\begin{corollary}\label{col: TV}
    Fix a prime $r\geq 5$. Consider the problem $\mathcal{P}$ of, given a triangulation of a closed 3-manifold $M$, returning a value-distinguishing approximation of its $SO_r(3)$-TV invariant if $M$ has the properties of Theorem \ref{th: properties}, otherwise remaining silent. 
    Then $\mathcal{P}$ is \#\texttt{P}-hard in the sense of Cook-Turing reduction. 
\end{corollary}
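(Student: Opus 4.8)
The plan is to follow the proof of Corollary~\ref{col: RT} essentially line by line, inserting the Heegaard-diagram-to-triangulation conversion of Theorem~\ref{th: glue handlebody} and replacing the final appeal to Theorem~\ref{th: alagic} by a detour through the Turaev--Walker theorem. Let $\mathcal{O}$ be an oracle machine that solves $\mathcal{P}$, and let $\mathcal{P}'$ be the problem of computing a value-distinguishing approximation of the $SO_r(3)$-RT invariant of a closed $3$-manifold given as a pair $(\Sigma_g,\phi)$ with $\phi\in\Mod(\Sigma_g)$; by Theorem~\ref{th: alagic}, $\mathcal{P}'$ is \#\texttt{P}-hard, so it suffices to show that $\mathcal{O}$ solves $\mathcal{P}'$ with polynomial overhead.

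Given an instance $(\Sigma_g,\phi)$ of $\mathcal{P}'$ encoding a closed $3$-manifold $M$, I would first apply Lemma~\ref{lm: word to diagram} to obtain, in polynomial time, a Heegaard diagram $(\Sigma_{g'},\beta)$ of $M$ with $\beta$ normal with respect to a one-vertex triangulation of the standard genus-$g'$ handlebody in $\mathbb{R}^3$ (such triangulations exist by \cite{jaco2006layeredtriangulations3manifolds}). Next, run the algorithm of Theorem~\ref{th: properties} on this diagram with the fixed parameter $k$; it outputs three Heegaard diagrams, each representing a $3$-manifold whose $SO_r(3)$-RT invariant equals $\langle M\rangle^{RT}_{SO_r(3)}$, and at least one of which represents a hyperbolic $3$-manifold with no embedded incompressible orientable surface of genus at most $2k$. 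For each of the three diagrams, I would then invoke Theorem~\ref{th: glue handlebody} to produce, in polynomial time, a triangulation of the underlying closed $3$-manifold. Finally, run $\mathcal{O}$ in parallel on these three triangulations and halt as soon as one instance returns an answer: the instance carrying the properties of Theorem~\ref{th: properties} is guaranteed to halt, so this yields a value-distinguishing approximation of the $SO_r(3)$-TV invariant of some $M'$ with $\langle M'\rangle^{RT}_{SO_r(3)}=\langle M\rangle^{RT}_{SO_r(3)}$.

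It then remains to turn this into an approximation of $\langle M\rangle^{RT}_{SO_r(3)}$. Since $SO_r(3)$ is modular, the Turaev--Walker theorem gives $\langle M'\rangle^{TV}_{SO_r(3)}=|\langle M'\rangle^{RT}_{SO_r(3)}|^2=|\langle M\rangle^{RT}_{SO_r(3)}|^2$, so the oracle's output is a value-distinguishing approximation of $|\langle M\rangle^{RT}_{SO_r(3)}|^2$; by the reduction of \cite{alagic2014quantum}, this is enough to obtain a value-distinguishing approximation of $\langle M\rangle^{RT}_{SO_r(3)}$, which solves the given instance of $\mathcal{P}'$. Every step runs in time polynomial in the size of $(\Sigma_g,\phi)$, with the degree depending only on the fixed $k$, so $\mathcal{O}$ solves $\mathcal{P}'$ with polynomial overhead and $\mathcal{P}$ is therefore \#\texttt{P}-hard.

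The step I expect to be the main obstacle is checking that Theorem~\ref{th: glue handlebody} actually applies to the diagrams produced by Theorem~\ref{th: properties}: that theorem expects one of the two handlebodies to be the \emph{standard} embedded handlebody equipped with a \emph{one-vertex} triangulation, while Theorem~\ref{th: properties} only hands us two systems $\alpha',\beta'$ normal in some triangulation $T$ of $\Sigma_{g+1}$ obtained after a stabilization, several re-triangulations, and Dehn twists applied to the $\beta$-side. One should argue that $V_{\alpha'}$ is still a standard handlebody---stabilizing and twisting on the $\beta$-side leave $V_\alpha$ unchanged up to homeomorphism---and, if needed, re-triangulate $V_{\alpha'}$ to a one-vertex triangulation of the standard embedding in polynomial time before feeding it to Theorem~\ref{th: glue handlebody}. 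A secondary, routine point is confirming that a value-distinguishing approximation of $|z|^2$ yields one of $z$ on the relevant hard instances, which is precisely the content of the reduction quoted from \cite{alagic2014quantum}.
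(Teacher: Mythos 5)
Your overall architecture matches the paper's: a Cook--Turing reduction from the hardness of approximating the RT invariant, passing through Theorem~\ref{th: glue handlebody} to convert Heegaard diagrams into triangulations, and through the Turaev--Walker theorem plus the Alagic--Lo reduction to convert a value-distinguishing approximation of $|\langle M\rangle^{RT}|^2$ into one of $\langle M\rangle^{RT}$. (The paper reduces from Corollary~\ref{col: RT} rather than re-running the argument from Theorem~\ref{th: alagic}, but that is only a matter of bookkeeping.) You also correctly identified the one real obstacle: Theorem~\ref{th: glue handlebody} requires the curves $\beta$ to be given on a \emph{one-vertex} triangulation of the standard handlebody, while the diagrams emerging from Lemma~\ref{lm: word to diagram} and Theorem~\ref{th: properties} live on triangulations with many vertices.

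The gap is that you defer this step with ``re-triangulate $V_{\alpha'}$ to a one-vertex triangulation \ldots{} in polynomial time'' without supplying the argument, and this is precisely where the paper does its work. Reducing the number of vertices of a 3-manifold triangulation in polynomial time while preserving the topology is not a routine re-triangulation: the paper invokes the Jaco--Rubinstein \emph{crushing} procedure, finding (via the link of an edge with distinct endpoints, normalized in polynomial time) a normal sphere or disk bounding a ball that contains an edge joining two distinct vertices, and crushing it; iterating this reduces to a one-vertex triangulation with polynomially many steps, each preserving the homeomorphism type. Without naming this tool (or an equivalent one), your proof is incomplete at its only non-trivial point. A secondary caveat: your worry about whether $V_{\alpha'}$ remains a standard handlebody is correctly resolved as you suggest (all modifications in Theorem~\ref{th: properties} act on the $\beta$-side or are stabilizations), but the vertex-count issue concerns the surface triangulation carrying $\beta$, not the identification of $V_{\alpha'}$, so the crushing step is needed regardless.
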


\begin{proof}
    An oracle $\mathcal{O}$ that approximates $SO_r(3)$-TV can be used to approximate $SO_r(3)$-RT for any prime $r\geq 5$ \cite[Theorem 5.1]{alagic2014quantum}. 
    Moreover, we note that one can transform a general triangulation of the standard embedding of the genus $g$ handlebody with $\beta$ curves represented as intersection words into a one-vertex triangulation in time $O(\poly(g,\|I_T(\beta)\|,|T|))$ using standard computational topology arguments. 
    More precisely, Jaco and Rubinstein introduce and analyze in~\cite{jaco20030} the operation of {\em crushing} a normal surface in a possibly bounded triangulated 3-manifold, which can be implemented by a polynomial time algorithm (see also~\cite{Burton14}). 
    For any triangulation of a 3-manifold with at least two vertices (except for an exceptional behavior with triangulations of $S^3$), they prove the existence of a normal sphere (or normal disk) bounding a ball, whose crushing reduces the number of vertices while preserving the topology (see~\cite[Proof of Proposition 5.1]{jaco20030} for the existence of the normal sphere/disk, and~\cite[Theorem 2]{Burton14}, originally proved in~\cite{jaco20030}, for the preservation of the topology under crushing). 
    This sphere can be found in polynomial time by considering the link of an edge with distinct endpoints, and normalizing it~\cite[Proof of Proposition 5.1]{jaco20030}.
    It follows that, as long as the triangulation has more than two vertices, one can find and crush in polynomial time a normal sphere or disk bounding a 3-ball containing an edge connecting distinct vertices, and reduce the number of vertices while preserving the topology. 
    Consequently, by Theorem \ref{th: glue handlebody}, one can reduce the problem of deciding on a value-distinguishing approximation of the RT invariant to deciding on a value-distinguishing approximation of the TV invariant. 
    Because, by Corollary \ref{col: RT}, the former is \#\texttt{P}-hard, the latter will also be.
\end{proof}
\bibliographystyle{plainurl}
\bibliography{hardness}

\begin{thebibliography}{10}

\bibitem{agol2006computational}
Ian Agol, Joel Hass, and William Thurston.
\newblock The computational complexity of knot genus and spanning area.
\newblock {\em Transactions of the American Mathematical Society}, 358(9):3821--3850, 2006.
\newblock URL: \url{https://arxiv.org/abs/math/0205057}.

\bibitem{aharonovJL2009polynomial}
D.~Aharonov, V.~Jones, and Z~Landau.
\newblock A polynomial quantum algorithm for approximating the {J}ones polynomial.
\newblock {\em Algorithmica}, 55:395--421, 2009.
\newblock \href {https://doi.org/10.1007/s00453-008-9168-0} {\path{doi:10.1007/s00453-008-9168-0}}.

\bibitem{alagic2011quantum}
Gorjan Alagic and Edgar~A. Bering.
\newblock {Quantum algorithms for invariants of triangulated manifolds}.
\newblock {\em Quant. Inf. Comput.}, 12(9-10):0843--0863, 2012.
\newblock \href {https://doi.org/10.26421/QIC12.9-10-8} {\path{doi:10.26421/QIC12.9-10-8}}.

\bibitem{alagic2014quantum}
Gorjan Alagic and Catharine Lo.
\newblock Quantum invariants of 3-manifolds and {NP} vs \#{P}.
\newblock {\em Quantum Info. Comput.}, 17(1–2):125–146, feb 2017.
\newblock URL: \url{https://arxiv.org/abs/1411.6049}.

\bibitem{aradL2010quantum}
Itai Arad and Zeph Landau.
\newblock Quantum computation and the evaluation of tensor networks.
\newblock {\em SIAM Journal on Computing}, 39(7):3089--3121, 2010.
\newblock \href {https://doi.org/10.1137/080739379} {\path{doi:10.1137/080739379}}.

\bibitem{Arouca_2022}
Rodrigo Arouca, Andrea Cappelli, and Hans Hansson.
\newblock Quantum field theory anomalies in condensed matter physics.
\newblock {\em SciPost Physics Lecture Notes}, sep 2022.
\newblock URL: \url{https://arxiv.org/abs/2204.02158}, \href {https://doi.org/10.21468/scipostphyslectnotes.62} {\path{doi:10.21468/scipostphyslectnotes.62}}.

\bibitem{bachman2017computing}
David Bachman, Ryan Derby-Talbot, and Eric Sedgwick.
\newblock Computing {H}eegaard genus is {NP}-hard.
\newblock {\em A Journey Through Discrete Mathematics: A Tribute to Ji{\v{r}}{\'\i} Matou{\v{s}}ek}, pages 59--87, 2017.
\newblock URL: \url{https://arxiv.org/abs/1606.01553}.

\bibitem{barrett1996invariants}
John Barrett and Bruce Westbury.
\newblock Invariants of piecewise-linear 3-manifolds.
\newblock {\em Transactions of the American Mathematical Society}, 348(10):3997--4022, 1996.
\newblock URL: \url{https://arxiv.org/abs/hep-th/9311155}.

\bibitem{bell2016polynomial}
Mark~C. Bell and Richard C.~H. Webb.
\newblock Polynomial-time algorithms for the curve graph.
\newblock {\em arXiv: Geometric Topology}, 2016.
\newblock URL: \url{https://arxiv.org/abs/1609.09392}.

\bibitem{bell2015recognising}
Mark~Christopher Bell.
\newblock {\em Recognising mapping classes}.
\newblock PhD thesis, University of Warwick, 2015.
\newblock URL: \url{https://wrap.warwick.ac.uk/id/eprint/77123/}.

\bibitem{BenedettiPetronioHyperbolicGeom}
Riccardo Benedetti and Carlo Petronio.
\newblock {\em Lectures on Hyperbolic Geometry}.
\newblock Springer, 1992.

\bibitem{bing1959alternative}
RH~Bing.
\newblock An alternative proof that 3-manifolds can be triangulated.
\newblock {\em Annals of Mathematics}, 69(1):37--65, 1959.
\newblock \href {https://doi.org/10.2307/1970092} {\path{doi:10.2307/1970092}}.

\bibitem{brinkmann2001computing}
Peter Brinkmann and Saul Schleimer.
\newblock Computing triangulations of mapping tori of surface homeomorphisms.
\newblock {\em Experimental Mathematics}, 10(4):571--581, 2001.
\newblock \href {https://doi.org/10.1080/10586458.2001.10504677} {\path{doi:10.1080/10586458.2001.10504677}}.

\bibitem{Burton14}
Benjamin~A. Burton.
\newblock A new approach to crushing 3-manifold triangulations.
\newblock {\em Discrete \& Computational Geometry}, 52:116--139, 2014.
\newblock \href {https://doi.org/10.1007/s00454-014-9572-y} {\path{doi:10.1007/s00454-014-9572-y}}.

\bibitem{burton:LIPIcs.SoCG.2018.18}
Benjamin~A. Burton.
\newblock {The HOMFLY-PT Polynomial is Fixed-Parameter Tractable}.
\newblock In Bettina Speckmann and Csaba~D. T\'{o}th, editors, {\em 34th International Symposium on Computational Geometry (SoCG 2018)}, volume~99 of {\em Leibniz International Proceedings in Informatics (LIPIcs)}, pages 18:1--18:14, Dagstuhl, Germany, 2018. Schloss Dagstuhl -- Leibniz-Zentrum f{\"u}r Informatik.
\newblock \href {https://doi.org/10.4230/LIPIcs.SoCG.2018.18} {\path{doi:10.4230/LIPIcs.SoCG.2018.18}}.

\bibitem{burton:LIPIcs.SoCG.2020.25}
Benjamin~A. Burton.
\newblock {The Next 350 Million Knots}.
\newblock In Sergio Cabello and Danny~Z. Chen, editors, {\em 36th International Symposium on Computational Geometry (SoCG 2020)}, volume 164 of {\em Leibniz International Proceedings in Informatics (LIPIcs)}, pages 25:1--25:17, Dagstuhl, Germany, 2020. Schloss Dagstuhl -- Leibniz-Zentrum f{\"u}r Informatik.
\newblock \href {https://doi.org/10.4230/LIPIcs.SoCG.2020.25} {\path{doi:10.4230/LIPIcs.SoCG.2020.25}}.

\bibitem{BurtonMS18}
Benjamin~A. Burton, Cl{\'{e}}ment Maria, and Jonathan Spreer.
\newblock Algorithms and complexity for {T}uraev-{V}iro invariants.
\newblock {\em Journal of Applied and Computational Topology}, 2(1-2):33--53, 2018.
\newblock \href {https://doi.org/10.1007/s41468-018-0016-2} {\path{doi:10.1007/s41468-018-0016-2}}.

\bibitem{chen2018quantum}
Qingtao Chen and Tian Yang.
\newblock Volume conjectures for the {R}eshetikhin–{T}uraev and the {T}uraev–{V}iro invariants.
\newblock {\em Quantum Topology}, 3:419--460, 2018.
\newblock URL: \url{https://arxiv.org/abs/1503.02547}.

\bibitem{delaney2024algorithmtambarayamagamiquantuminvariants}
Colleen Delaney, Clément Maria, and Eric Samperton.
\newblock An algorithm for {T}ambara-{Y}amagami quantum invariants of 3-manifolds, parameterized by the first {B}etti number, 2024.
\newblock URL: \url{https://arxiv.org/html/2311.08514v2}, \href {https://arxiv.org/abs/2311.08514} {\path{arXiv:2311.08514}}.

\bibitem{erickson2012tracing}
Jeff Erickson and Amir Nayyeri.
\newblock Tracing compressed curves in triangulated surfaces.
\newblock In {\em Proceedings of the twenty-eighth annual symposium on Computational geometry}, pages 131--140, 2012.
\newblock \href {https://doi.org/10.1145/2261250.2261270} {\path{doi:10.1145/2261250.2261270}}.

\bibitem{etingof2002vafa}
Pavel Etingof.
\newblock On {V}afa’s theorem for tensor categories.
\newblock {\em Mathematical Research Letters}, 9:651--657, 2002.
\newblock URL: \url{https://arxiv.org/abs/math/0207007}.

\bibitem{evans2006high}
Tatiana Evans.
\newblock High distance {H}eegaard splittings of 3-manifolds.
\newblock {\em Topology and its Applications}, 153(14):2631--2647, 2006.
\newblock \href {https://doi.org/10.1016/j.topol.2005.11.003} {\path{doi:10.1016/j.topol.2005.11.003}}.

\bibitem{farb2011primer}
Benson Farb and Dan Margalit.
\newblock {\em A primer on mapping class groups}, volume~41.
\newblock Princeton University Press, 2011.

\bibitem{Fradkin_2024}
Eduardo Fradkin.
\newblock {\em Field theoretic aspects of condensed matter physics: An overview}, page 27–131.
\newblock Elsevier, 2024.
\newblock URL: \url{https://arxiv.org/abs/2301.13234}, \href {https://doi.org/10.1016/b978-0-323-90800-9.00269-9} {\path{doi:10.1016/b978-0-323-90800-9.00269-9}}.

\bibitem{freedmanLW2002topological}
M.~Freedman, M.~Larsen, and Z.~Wang.
\newblock The two-eigenvalue problem and density of {J}ones representation of braid groups.
\newblock {\em Communications in Mathematical Physics}, 228:177--199, 2002.
\newblock URL: \url{https://arxiv.org/abs/math/0103200}.

\bibitem{haken1961theorie}
Wolfgang Haken.
\newblock Theorie der normalfl{\"a}chen: ein isotopiekriterium f{\"u}r den kreisknoten.
\newblock 1961.

\bibitem{hartshorn2002heegaard}
Kevin Hartshorn.
\newblock Heegaard splittings of haken manifolds have bounded distance.
\newblock {\em Pacific journal of mathematics}, 204(1):61--75, 2002.
\newblock URL: \url{https://msp.org/pjm/2002/204-1/p05.xhtml}.

\bibitem{hatcher2002algebraic}
Allen Hatcher.
\newblock {\em Algebraic Topology}.
\newblock Cambridge University Press, 2002.

\bibitem{hatcher1980presentation}
Allen Hatcher and William Thurston.
\newblock A presentation for the mapping class group of a closed orientable surface.
\newblock {\em Topology}, 19(3):221--237, 1980.
\newblock URL: \url{https://pi.math.cornell.edu/~hatcher/Papers/MCGpresentation.pdf}.

\bibitem{he2023algorithm}
Alexander He, James Morgan, and Em~K Thompson.
\newblock An algorithm to construct one-vertex triangulations of {H}eegaard splittings.
\newblock {\em arXiv preprint arXiv:2312.17556}, 2023.
\newblock URL: \url{https://arxiv.org/abs/2312.17556}.

\bibitem{hempel20013}
John Hempel.
\newblock 3-manifolds as viewed from the curve complex.
\newblock {\em Topology}, 40(3):631--657, 2001.
\newblock URL: \url{https://arxiv.org/abs/math/9712220}.

\bibitem{hempel20223}
John Hempel.
\newblock {\em 3-Manifolds}, volume 349.
\newblock American Mathematical Society, 2022.

\bibitem{huszar_et_al:LIPIcs.SoCG.2019.44}
Krist\'{o}f Husz\'{a}r and Jonathan Spreer.
\newblock {3-Manifold Triangulations with Small Treewidth}.
\newblock In Gill Barequet and Yusu Wang, editors, {\em 35th International Symposium on Computational Geometry (SoCG 2019)}, volume 129 of {\em Leibniz International Proceedings in Informatics (LIPIcs)}, pages 44:1--44:20, Dagstuhl, Germany, 2019. Schloss Dagstuhl -- Leibniz-Zentrum f{\"u}r Informatik.
\newblock \href {https://doi.org/10.4230/LIPIcs.SoCG.2019.44} {\path{doi:10.4230/LIPIcs.SoCG.2019.44}}.

\bibitem{huszar_et_al:LIPIcs.SoCG.2023.42}
Krist\'{o}f Husz\'{a}r and Jonathan Spreer.
\newblock {On the Width of Complicated JSJ Decompositions}.
\newblock In Erin~W. Chambers and Joachim Gudmundsson, editors, {\em 39th International Symposium on Computational Geometry (SoCG 2023)}, volume 258 of {\em Leibniz International Proceedings in Informatics (LIPIcs)}, pages 42:1--42:18, Dagstuhl, Germany, 2023. Schloss Dagstuhl -- Leibniz-Zentrum f{\"u}r Informatik.
\newblock \href {https://doi.org/10.4230/LIPIcs.SoCG.2023.42} {\path{doi:10.4230/LIPIcs.SoCG.2023.42}}.

\bibitem{Huszár_2022}
Kristóf Huszár.
\newblock On the pathwidth of hyperbolic 3-manifolds.
\newblock {\em Computing in Geometry and Topology}, 1(1):1:1–1:19, Feb. 2022.
\newblock URL: \url{https://arxiv.org/abs/2105.11371}, \href {https://doi.org/10.57717/cgt.v1i1.4} {\path{doi:10.57717/cgt.v1i1.4}}.

\bibitem{ido2014heegaard}
Ayako Ido, Yeonhee Jang, and Tsuyoshi Kobayashi.
\newblock Heegaard splittings of distance exactly n.
\newblock {\em Algebraic \& geometric topology}, 14(3):1395--1411, 2014.
\newblock URL: \url{https://arxiv.org/abs/1210.7627}.

\bibitem{jaco20030}
William Jaco and J~Hyam Rubinstein.
\newblock 0-efficient triangulations of 3-manifolds.
\newblock {\em Journal of Differential Geometry}, 65(1):61--168, 2003.
\newblock URL: \url{https://arxiv.org/abs/math/0207158}.

\bibitem{jaco2006layeredtriangulations3manifolds}
William Jaco and J.~Hyam Rubinstein.
\newblock Layered-triangulations of 3-manifolds, 2006.
\newblock URL: \url{https://arxiv.org/abs/math/0603601}, \href {https://arxiv.org/abs/math/0603601} {\path{arXiv:math/0603601}}.

\bibitem{johnson2006notes}
Jesse Johnson.
\newblock Notes on {H}eegaard splittings.
\newblock {\em preprint}, 2006.

\bibitem{johnson2013non}
Jesse Johnson.
\newblock Non-uniqueness of high distance {H}eegaard splittings.
\newblock {\em arXiv preprint arXiv:1308.4599}, 2013.
\newblock URL: \url{https://arxiv.org/abs/1308.4599}.

\bibitem{KITAEV20032}
A.Yu. Kitaev.
\newblock Fault-tolerant quantum computation by anyons.
\newblock {\em Annals of Physics}, 303(1):2--30, 2003.
\newblock URL: \url{https://arxiv.org/abs/quant-ph/9707021}, \href {https://doi.org/10.1016/S0003-4916(02)00018-0} {\path{doi:10.1016/S0003-4916(02)00018-0}}.

\bibitem{kobayashi1988casson}
Tsuyoshi Kobayashi.
\newblock Casson-{G}ordon's rectangle condition of {H}eegaard diagrams and incompressible tori in 3-manifolds.
\newblock {\em Osaka Journal of Mathematics}, 25:553--573, 1988.

\bibitem{koenig2010quantum}
Robert Koenig, Greg Kuperberg, and Ben~W Reichardt.
\newblock Quantum computation with {T}uraev--{V}iro codes.
\newblock {\em Annals of Physics}, 325(12):2707--2749, 2010.
\newblock URL: \url{https://arxiv.org/abs/1002.2816}.

\bibitem{kuperberg2009hard}
Greg Kuperberg.
\newblock How hard is it to approximate the {J}ones {P}olynomial?
\newblock {\em Theory Computing}, 11:183--219, 2009.
\newblock URL: \url{https://arxiv.org/abs/0908.0512}.

\bibitem{kuperberg2019algorithmic}
Greg Kuperberg.
\newblock Algorithmic homeomorphism of 3-manifolds as a corollary of geometrization.
\newblock {\em Pacific Journal of Mathematics}, 301(1):189--241, 2019.
\newblock URL: \url{https://arxiv.org/abs/1508.06720}.

\bibitem{lickorish1962representation}
WB~Raymond Lickorish.
\newblock A representation of orientable combinatorial 3-manifolds.
\newblock {\em Annals of Mathematics}, 76(3):531--540, 1962.
\newblock \href {https://doi.org/10.2307/1970373} {\path{doi:10.2307/1970373}}.

\bibitem{lickorish1964finite}
William~BR Lickorish.
\newblock A finite set of generators for the homeotopy group of a 2-manifold.
\newblock In {\em Mathematical Proceedings of the Cambridge Philosophical Society}, volume~60, pages 769--778. Cambridge University Press, 1964.
\newblock \href {https://doi.org/10.1017/S030500410003824X} {\path{doi:10.1017/S030500410003824X}}.

\bibitem{lustig2009high}
Martin Lustig and Yoav Moriah.
\newblock High distance {H}eegaard splittings via fat train tracks.
\newblock {\em Topology and its Applications}, 156(6):1118--1129, 2009.
\newblock URL: \url{https://arxiv.org/abs/0706.0599}.

\bibitem{MAKOWSKY2003742}
J.A. Makowsky and J.P. Mariño.
\newblock The parametrized complexity of knot polynomials.
\newblock {\em Journal of Computer and System Sciences}, 67(4):742--756, 2003.
\newblock Parameterized Computation and Complexity 2003.
\newblock \href {https://doi.org/10.1016/S0022-0000(03)00080-1} {\path{doi:10.1016/S0022-0000(03)00080-1}}.

\bibitem{manning2002algorithmic}
Jason~Fox Manning.
\newblock Algorithmic detection and description of hyperbolic structures on closed 3--manifolds with solvable word problem.
\newblock {\em Geometry \& Topology}, 6(1):1--26, 2002.
\newblock URL: \url{https://arxiv.org/abs/math/0102154}.

\bibitem{Maria21}
Cl{\'{e}}ment Maria.
\newblock Parameterized complexity of quantum knot invariants.
\newblock In {\em Proceedings of the International Symposium on Computational Geometry, {SoCG}}, volume 189 of {\em LIPIcs}, pages 53:1--53:17, 2021.
\newblock \href {https://doi.org/10.4230/LIPIcs.SoCG.2021.53} {\path{doi:10.4230/LIPIcs.SoCG.2021.53}}.

\bibitem{MariaP19}
Cl{\'{e}}ment Maria and Jessica~S. Purcell.
\newblock Treewidth, crushing and hyperbolic volume.
\newblock {\em Algebraic \& Geometric Topology}, 19:2625–2652, 2019.
\newblock URL: \url{https://arxiv.org/abs/1805.02357}.

\bibitem{MariaR21}
Cl{\'{e}}ment Maria and Owen Rouill{\'{e}}.
\newblock Computation of large asymptotics of 3-manifold quantum invariants.
\newblock In {\em Proceedings of the Symposium on Algorithm Engineering and Experiments, {ALENEX}}, pages 193--206. {SIAM}, 2021.
\newblock \href {https://doi.org/10.1137/1.9781611976472.15} {\path{doi:10.1137/1.9781611976472.15}}.

\bibitem{MariaS20}
Cl{\'{e}}ment Maria and Jonathan Spreer.
\newblock A polynomial-time algorithm to compute {T}uraev-{V}iro invariants tv(4,q) of 3-manifolds with bounded first {B}etti number.
\newblock {\em Foundations of Computational Mathematics}, 20(5):1013--1034, 2020.
\newblock URL: \url{https://arxiv.org/abs/1607.02218}.

\bibitem{matveev03-algms}
Sergei Matveev.
\newblock {\em Algorithmic Topology and Classification of 3-Manifolds}.
\newblock Number~9 in Algorithms and Computation in Mathematics. Springer, Berlin, 2003.

\bibitem{moise1952affine}
Edwin~E Moise.
\newblock Affine structures in 3-manifolds: V. the triangulation theorem and hauptvermutung.
\newblock {\em Annals of mathematics}, 56(1):96--114, 1952.
\newblock \href {https://doi.org/10.2307/1969769} {\path{doi:10.2307/1969769}}.

\bibitem{muller2023dehn}
Lukas M{\"u}ller and Lukas Woike.
\newblock The {D}ehn twist action for quantum representations of mapping class groups.
\newblock {\em arXiv preprint arXiv:2311.16020}, 2023.
\newblock URL: \url{https://arxiv.org/abs/2311.16020}.

\bibitem{perelman2003finiteextinctiontimesolutions}
Grisha Perelman.
\newblock Finite extinction time for the solutions to the {R}icci flow on certain three-manifolds, 2003.
\newblock URL: \url{https://arxiv.org/abs/math/0307245}, \href {https://arxiv.org/abs/math/0307245} {\path{arXiv:math/0307245}}.

\bibitem{perelman2003ricciflowsurgerythreemanifolds}
Grisha Perelman.
\newblock Ricci flow with surgery on three-manifolds, 2003.
\newblock URL: \url{https://www.arxiv.org/abs/math/0303109}, \href {https://arxiv.org/abs/math/0303109} {\path{arXiv:math/0303109}}.

\bibitem{qiu2015heegaard}
Ruifeng Qiu, Yanqing Zou, and Qilong Guo.
\newblock The {H}eegaard distances cover all nonnegative integers.
\newblock {\em Pacific Journal of Mathematics}, 275(1):231--255, 2015.
\newblock URL: \url{https://arxiv.org/abs/1302.5188}.

\bibitem{salvatore2001frameddiscsoperadsequivariant}
Paolo Salvatore and Nathalie Wahl.
\newblock Framed discs operads and the equivariant recognition principle, 2001.
\newblock URL: \url{https://arxiv.org/abs/math/0106242}, \href {https://arxiv.org/abs/math/0106242} {\path{arXiv:math/0106242}}.

\bibitem{samperton2023topological}
Eric Samperton.
\newblock Topological quantum computation is hyperbolic.
\newblock {\em Communications in Mathematical Physics}, 402:79--96, 2023.
\newblock URL: \url{https://arxiv.org/abs/2201.00857}.

\bibitem{saveliev2011lectures}
Nikolai Saveliev.
\newblock {\em Lectures on the topology of 3-manifolds: an introduction to the Casson invariant}.
\newblock Walter de Gruyter, 2011.

\bibitem{schaefer2002algorithms}
Marcus Schaefer, Eric Sedgwick, and Daniel {\v{S}}tefankovi{\v{c}}.
\newblock Algorithms for normal curves and surfaces.
\newblock In {\em Computing and Combinatorics: 8th Annual International Conference, COCOON 2002 Singapore, August 15--17, 2002 Proceedings 8}, pages 370--380. Springer, 2002.
\newblock \href {https://doi.org/10.1007/3-540-45655-4_40} {\path{doi:10.1007/3-540-45655-4_40}}.

\bibitem{schaefer2008computing}
Marcus Schaefer, Eric Sedgwick, and Daniel Stefankovic.
\newblock Computing {D}ehn twists and geometric intersection numbers in polynomial time.
\newblock In {\em CCCG}, volume~20, pages 111--114, 2008.
\newblock URL: \url{https://www.cs.rochester.edu/~stefanko/Publications/geometric.pdf}.

\bibitem{scharlemann2006alternate}
Martin Scharlemann and Maggy Tomova.
\newblock Alternate {H}eegaard genus bounds distance.
\newblock {\em Geometry \& Topology}, 10(1):593--617, 2006.
\newblock URL: \url{https://arxiv.org/abs/math/0501140}.

\bibitem{schultens2014introduction}
Jennifer Schultens.
\newblock {\em Introduction to 3-manifolds}, volume 151.
\newblock American Mathematical Soc., 2014.

\bibitem{scull2021homeomorphismproblemhyperbolicmanifolds}
Joe Scull.
\newblock The homeomorphism problem for hyperbolic manifolds {I}, 2021.
\newblock URL: \url{https://arxiv.org/abs/2108.00779}, \href {https://arxiv.org/abs/2108.00779} {\path{arXiv:2108.00779}}.

\bibitem{singer2015lecture}
Isadore~Manuel Singer and John~A Thorpe.
\newblock {\em Lecture notes on elementary topology and geometry}.
\newblock Springer, 2015.

\bibitem{starr1992curves}
Edith~Nelson Starr.
\newblock {\em Curves in handlebodies}.
\newblock PhD thesis, University of California, Berkeley, 1992.

\bibitem{thompson1999disjoint}
Abigail Thompson.
\newblock The disjoint curve property and genus 2 manifolds.
\newblock {\em Topology and its Applications}, 97(3):273--279, 1999.

\bibitem{turaev2010quantum}
Vladimir~G Turaev.
\newblock {\em Quantum invariants of knots and 3-manifolds}.
\newblock de Gruyter, 2010.

\bibitem{vafa1988toward}
Cumrun Vafa.
\newblock Toward classification of conformal theories.
\newblock {\em Physics Letters B}, 206(3):421--426, 1988.

\bibitem{verdiere2007optimal}
{\'E}ric Colin~De Verdi{\`e}re and Francis Lazarus.
\newblock Optimal pants decompositions and shortest homotopic cycles on an orientable surface.
\newblock {\em Journal of the ACM (JACM)}, 54(4):18--es, 2007.
\newblock \href {https://doi.org/10.1007/978-3-540-24595-7_45} {\path{doi:10.1007/978-3-540-24595-7_45}}.

\bibitem{yoshizawa2014high}
Michael Yoshizawa.
\newblock High distance {H}eegaard splittings via {D}ehn twists.
\newblock {\em Algebraic \& Geometric Topology}, 14(2):979--1004, 2014.
\newblock URL: \url{https://arxiv.org/abs/1212.1199}.

\end{thebibliography}

\end{document}